\newtheorem{theorem}{Theorem} [section]
\theoremstyle{definition}
\newtheorem{defn}[theorem]{Definition}
\newtheorem{example}[theorem]{Example}
\newtheorem{remark}[theorem]{Remark}
\newtheorem{prop}[theorem]{Proposition}
\newtheorem{lemma}[theorem]{Lemma}
\newtheorem{corollary}[theorem]{Corollary}
\newtheorem{conj}[theorem]{Conjecture}
\newtheorem{conven}[theorem]{Convention}
\newcommand{\Hom}{\mathrm{Hom}}
\newcommand{\al}\alpha
\newcommand{\End}{\mrm{End}}
\newcommand{\gl}{\mathfrak{gl}}
\newcommand{\om}\omega
\newcommand{\bbz}{\mathbb{Z}}
\newcommand{\bbn}{\mathbb{N}}
\newcommand{\Lie}{\mathsf{Lie}}
\def\gl{\mathfrak{gl}}
\def\cb{\mathcal{B}}
\def\gr{\text{gr}}
\def\scrp{\mathscr{P}}
\def\scrf{\mathscr{F}}
\def\jM{{{}_JM}}
\def\ggg{\mathfrak{g}}
\def\ppp{\mathfrak{p}}
\def\hhh{\mathfrak{h}}
\def\nnn{\mathfrak{n}}
\def\uuu{\mathfrak{u}}
\def\caln{\mathcal{N}}
\def\calf{\mathcal{F}}
\def\calv{\mathcal{V}}
\def\calh{\mathcal{H}}
\def\calk{\mathcal{K}}
\def\callp{{\mathcal{L}\hskip-3pt\mathscr{P}}}
\def\ucaln{\underline{\mathcal{N}}}
\def\calp{\mathscr{P}}
\def\scrf{\mathscr{F}}
\def\scraf{{}^{\textsf{a}}\hskip-3pt\mathscr{F}}
\def\scref{{}^{\textsf{e}}\hskip-3pt\mathscr{F}}
\def\pia{{{}^{\textsf{a}}\hskip-2pt\pi}_{\hskip-2pt{\lambda[q]}}}
\def\pie{{{}^{\textsf{e}}\hskip-2pt\pi}_{\hskip-2pt{\lambda[q]}}}
\def\pias{{{}^{\textsf{a}}\hskip-2pt\pi}}
\def\pies{{{}^{\textsf{e}}\hskip-2pt\pi}}
\def\scrpe{{{}^{\textsf{e}}\hskip-2pt\scrp_n}}
\def\bk{\mathbf{k}}
\def\ttF{\texttt{F}}
\def\bbf{\mathbb{F}}
\def\bbz{\mathbb{Z}}
\def\bbq{\mathbb{Q}}
\def\bbk{\mathbf{k}}
\def\bbn{\mathbb{N}}
\def\ug{\underline{g}}
\def\ugg{\underline{\mathfrak{g}}}
\def\uG{\underline{G}}
\def\uv{\underline{v}}
\def\uv{{\underline{v}}}
\def\bk{\mathbf{k}}
\def\aq{{/\hskip-4pt/}}
\def\Mm{{M^{(m)}}}
\def\Mmp{{M^{(m)}_+}}
\def\sfO{\textsf{O}}
\def\sft{\textsf{t}}
\def\Lie{\mathsf{Lie}}
\def\im{\text{im}}
\def\Ad{\mathsf{Ad}}
\def\gr{\text{gr}}
\def\GL{\text{GL}}
\def\Hom{\text{Hom}}
\def\End{\text{End}}
\def\id{\mathsf{id}}
\def\gr{\mathsf{gr}}
\newcommand{\co}{\mathcal{O}}
\def\GL{\text{\rm GL}}
\def\diag{\text{\rm Diag}}
\numberwithin{equation}{section}
\begin{document}

\subjclass[2010]{20E45, 17B10, 05E10, 05E18, 32S30, 32S60}

\keywords{enhanced reductive groups, enhanced nilpotent orbits, enhanced partitions of a positive integer number,  enhanced Springer resolutions/fibers, intersection cohomology}


\title[On enhanced reductive groups (II)]{On enhanced reductive groups (II): \\nilpotent orbits under enhanced group action and their closures}
\author{Bin Shu, Yunpeng Xue and Yufeng Yao}

\begin{center}\textsc{Dedicated to the memory of Professor Guang-Yu Shen}
\end{center}

\address{School of Mathematical Sciences, Ministry of Education Key Laboratory of Mathematics and Engineering Applications \& Shanghai Key Laboratory of PMMP,  East China Normal University, No. 500 Dongchuan Rd., Shanghai 200241, China} \email{bshu@math.ecnu.edu.cn}
\address{School of Mathematical Sciences, Ministry of Education Key Laboratory of Mathematics and Engineering Applications \& Shanghai Key Laboratory of PMMP,  East China Normal University, No. 500 Dongchuan Rd., Shanghai 200241, China}
 \email{1647227538@qq.com}
\address{Department of Mathematics, Shanghai Maritime University, No.1550 Lingang Av., Shanghai, 201306, China.}\email{yfyao@shmtu.edu.cn}

\begin{abstract} This is a sequel to \cite{osy} and \cite{sxy}. Associated with $G:=\GL_n$ and its rational representation $(\rho, M)$ over an algebraically closed field $\bk$, we define an enhanced reductive algebraic group $\uG:=G\ltimes_\rho M$ which is a product variety $\GL_n\times M$, endowed with an enhanced cross product. In this paper, we first show that the nilpotent cone $\ucaln:=\caln(\ugg)$ of the enhanced Lie algebra $\ugg:=\Lie(\uG)$  has finite nilpotent orbits under adjoint $\uG$-action if and only if  up to tensors with one-dimensional modules,
  $M$ is isomorphic to one of the three kinds of modules: (i)  a one-dimensional module, (ii) the natural module $\bk^n$, (iii) the linear dual of $\bk^n$  when $n>2$; and $M$ is an irreducible module of dimension not bigger than $3$ when $n=2$. We then investigate the geometry of enhanced nilpotent orbits when the finiteness occurs.  Our focus is on the enhanced group $\uG=\GL(V)\ltimes_{\eta}V$ with the natural representation $(\eta, V)$ of $\GL(V)$, for which we give a precise classification of finite  nilpotent orbits via a finite set $\scrpe$ of  so-called enhanced partitions of $n=\dim V$, then give a precise description of  the closures of enhanced nilpotent orbits via constructing so-called enhanced flag varieties. Finally, the $\uG$-equivariant intersection cohomology  decomposition on the nilpotent cone of $\ugg$ along the closures of nilpotent orbits is established.
\end{abstract}

\maketitle

\setcounter{tocdepth}{1}\tableofcontents
\begin{center}
\end{center}
\section*{Introduction}

This note is a sequel to \cite{osy} and \cite{sxy}. The program of study of so-called enhanced reductive algebraic groups (and Lie algebras) is motivated by the study of modular representations of simple Lie algebras of non-classical type (see \cite{osy} and \cite{sxy}). We always assume that $\bk$ is an algebraically closed field.

\subsection{} Let us quickly look back on some background. In comparison with the great progress of modular representations of reductive Lie algebras in recent decades (see \cite{bm}, \cite{bmr1}, \cite{bmr2}, {\sl{etc}}), it is far away from understanding  modular representations of finite-dimensional simple Lie algebras of non-classical type  because, at least in some extent, the latter are not algebraic Lie algebras, lack of user-friendly platform. We try to find some way to cover this shortage. Let $\mathscr{L}$ be a finite-dimensional restricted  simple Lie algebras of non-classical type over an algebraically closed field of positive characteristic $p>7$, which is naturally endowed with an distinguished filtration structure $\{\mathscr{L}_i\}_{i\geq -d}$ with $d= 1,\text{ or }2$ (see \cite{ps}, or \cite{sf}).
 Then by Wilson's result \cite{wil}, $\text{Aut}(\mathscr{L})$ is a connected algebraic group  preserving both $p$-structure and the filtration structure of $\mathscr{L}$. The group $\text{Aut}(\mathscr{L})$ (denoted by $\uG$ for the time being) has a decomposition of the form $G\ltimes U$ with $G$ being a connected reductive algebraic group and $U$ the unipotent radical. The structure of $G$ and $U$ arise from the natural graded structure of $\mathscr{L}$. Roughly speaking, $\Lie(\uG)$ is actually the maximal distinguished filtered subalgebra $\mathscr{L}_0$. To a large  extent, the study of irreducible representations of $\mathscr{L}$ can be reduced to those of $\mathscr{L}_0$ (see \cite{shen},  \cite{sy}, \cite{sk}, {\sl etc}.). Such a $\uG$ of the form $G\ltimes U$ is called semi-reductive in \cite{osy}. Correspondingly, it becomes a vital topic for us to study representations of semi-reductive algebraic groups and their Lie algebras in prime characteristic.  We hope our program can shed some light on the study of modular representations of non-classical simple Lie algebras.

On the other hand, it has its own interest. Especially, when considering the enhanced product of a connected reductive algebraic group $G$ and an arbitrarily given (finite-dimensional) rational representations $(\rho, M)$, we have a natural semi-reductive algebraic group $G\ltimes_\rho M$ (see \S\ref{sec: enhanced gps}), which is called an enhanced reductive algebraic group. There are really lots of interesting new phenomena and questions related to different classical theories (see \cite{osy}, \cite{sxy} and \cite{Xue}).

\subsection{} In this paper, we focus on the enhanced reductive algebraic group $\uG:=G\ltimes_\rho M$ and $\ugg:=\Lie(\uG)$ for  $G=\GL_n$ and finite-dimensional rational representations $(\rho,M)$ of $G$ over $\bk$. Especially, we investigate the nilpotent cone $\ucaln:=\caln(\ugg)$, and the $\uG$-conjugacy classes in $\ucaln$. In view that the question can be reduced to the case when $(\rho,M)$ is a polynomial representation,  so we will focus our interest in this case where  irreducible modules of $\GL_n$ can be parameterized by the set  $P^+$ of partitions of positive integers, up to isomorphism (see \cite{Gr}). So we can write them as $\{L(\lambda)\mid \lambda \in P^+\}$.

Recall that in the classical theory on representations of reductive algebraic groups and their Lie algebras, the geometry concerning the nilpotent cones and their nilpotent orbits is very important (see \cite{jan2}). Their  geometric features  reflect rich representation information along with combinatorics properties. Naturally, it can be expected that for $\uG$, there are rich geometric properties with deep connection to representations of $\Lie(\uG)$ in prime characteristic. As further perspectives, one can expect to explore the approach in modular representations of reductive Lie algebras in the study of representations of $\mathscr{L}_0$, thereby of non-classical simple Lie algebra $\mathscr{L}$.

\subsection{Main results} The first purpose of this paper is to determine when $\ucaln$ mentioned above has finite orbits.  As is well-known, the nilpotent cones for reductive Lie algebras over $\bk$ have finite orbits (see for example \cite[Theorem 2.8.1]{jan2}).  This primary property  is a basis of geometric theory of reductive Lie algebras and their representations. It is worthwhile to solve the finiteness problem in the enhanced Lie algebra case. In this part, our main result is {as below.}

\begin{theorem}\label{thm: zerosec thm} (see Theorems \ref{thm: bigg n orbit} and \ref{thm: gl2 enh class}) Let $G=\GL_n$ and $(\rho,M)$ be a finite-dimensional rational representation over $\bk$ of any characteristic.
Keep the notations as above. In particular,  $\uG=\GL_n\ltimes_\rho M$, and $\ugg:=\Lie(\uG)$ which is isomorphic to $\gl_n\times M$ as a vector space.
\begin{itemize}
\item[(1)] If $\ucaln$ has finite orbits, then $M$ is an irreducible module.
\item[(2)] When $n>2$, the nilpotent cone $\ucaln$ of the enhanced Lie algebra $\ugg$  has finite nilpotent orbits under adjoint $\uG$-action if and only if this irreducible module $M$, up to tensor with a one-dimensional module, is isomorphic to one of the following three modules: (i) one-dimensional module, (ii) the natural module $\bk^n$;  (iii) the linear dual of natural module.
        \item[(3)] When $n=2$ and $\text{ch}(\bk)=0$, the nilpotent cone $\ucaln$ has finite nilpotent orbits if and only if the irreducible module $M$ is of dimension not bigger than $3$.
\end{itemize}
\end{theorem}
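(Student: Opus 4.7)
The plan is to analyze $\uG$-orbits on $\ucaln$ via the projection $\pi \colon \ucaln \to \caln(\gl_n)$, $(X, v) \mapsto X$. A direct computation of the adjoint action of $\uG = G \ltimes_\rho M$ on $\ugg = \gl_n \oplus M$ gives
\[\Ad(g, m)(X, v) = \bigl(\Ad_g X,\; \rho(g) v - d\rho(\Ad_g X)\, m\bigr),\]
so the normal subgroup $\{1\} \ltimes_\rho M$ translates $(X,v)$ within the coset $\{X\}\times (v + d\rho(X) M)$, while $G$ acts diagonally on the first coordinate. Consequently the $\uG$-orbits lying over the $G$-orbit of $X$ are in bijection with the $G^X$-orbits on $M/d\rho(X)M$, where $G^X$ is the centralizer of $X$ in $G$. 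Because $\caln(\gl_n)$ has only finitely many $G$-orbits (indexed by partitions of $n$), finiteness of $\uG$-orbits on $\ucaln$ is equivalent to finiteness of $G^X$-orbits on $M/d\rho(X)M$ for each nilpotent $X$.

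For Part (1), I would specialize to $X = 0$: the condition reduces to $\GL_n$ having finitely many orbits on $M$ itself. Under the hypothesis of complete reducibility, write $M = \bigoplus_i L(\lambda_i)$. Any non-trivial decomposition forces a continuous family of $\GL_n$-orbits: a trivial summand of positive dimension already contributes an $\bba^1$'s worth of orbits; two isomorphic non-trivial summands $L(\lambda)^{\oplus 2}$ admit a projective-line scalar invariant; and two inequivalent non-trivial summands fail after stabilizing the first factor. Hence $M$ must be irreducible.

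For Parts (2) and (3), I would then enumerate the irreducible $\GL_n$-modules $M = L(\lambda)$ for which $\GL_n$ has finitely many orbits: by the Sato--Kimura classification of irreducible prehomogeneous vector spaces of finite orbit type (equivalently by a direct weight inspection for $\GL_n$), the candidates, up to twist by $\det^k$, reduce to the trivial module, $\bk^n$, its dual, $S^2\bk^n$, $\Lambda^2\bk^n$, and a handful of low-rank exceptions. Finiteness in the three cases of Part (2) is known: $M=\bk$ is the classical nilpotent cone of $\gl_n$, $M=\bk^n$ recovers Travkin's / Achar--Henderson's enhanced nilpotent cone (parameterized by bipartitions), and $(\bk^n)^*$ follows by the transpose--inverse outer automorphism. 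To discard the remaining candidates when $n > 2$, I would choose $X$ to be a principal nilpotent and compute $d\rho(X)$ on $S^2\bk^n$ or $\Lambda^2\bk^n$ block-by-block via the principal $\mathfrak{sl}_2$-decomposition of the module; this produces an explicit non-constant $G^X$-invariant on $M/d\rho(X)M$, witnessing infinitely many orbits. For Part (3) with $n=2$, the irreducibles are $S^m\bk^2 \otimes \det^k$; here $G^X$ is $2$-dimensional and $M/d\rho(X)M$ has dimension $\lceil (m+1)/2 \rceil$, and a direct check gives finiteness iff $m \leq 2$, i.e.\ $\dim M \leq 3$.

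The main obstacle I anticipate is this elimination step: since $S^2\bk^n$ and $\Lambda^2\bk^n$ do carry finitely many $\GL_n$-orbits (the rank stratification), the condition at $X=0$ alone does not suffice, and one must produce an explicit continuous family (or a non-constant invariant) of $G^X$-orbits on $M/d\rho(X)M$ for a carefully chosen non-zero nilpotent $X$. A secondary subtlety is the reduction to the irreducible case when complete reducibility fails in prime characteristic, which explains the characteristic-zero-or-complete-reducibility hypothesis in the theorem.
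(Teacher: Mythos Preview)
Your reduction to $G^X$-orbits on $M/d\rho(X)M$ is correct and is exactly the paper's Lemma~\ref{lem A}. But two of the three parts have genuine gaps beyond that.

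\textbf{Part (1).} Testing only at $X=0$ is not enough. There \emph{are} reducible $\GL_n$-modules with finitely many $\GL_n$-orbits: for instance $M=\bk^n\oplus\Lambda^2\bk^n$, where a pair $(v,\omega)$ is classified by the rank of $\omega$ together with whether $v$ lies in the radical of $\omega$ or not. So your clause ``two inequivalent non-trivial summands fail after stabilizing the first factor'' is false as stated, and the $X=0$ test cannot force irreducibility. The paper avoids this by working at the \emph{regular} nilpotent $J=J_n$ instead: if $M=\bigoplus_i L(\mu^{(i)})$ with lowest-weight vectors $v_0^{(i)}$, then the family $v_0^{(1)}+\chi^m v_0^{(2)}+\cdots$ separates into distinct $G_J$-orbits modulo $\jM$, because each $\bar v_0^{(i)}$ is a $G_J$-eigenvector in the quotient and the scalars act with different characters on the different summands.

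\textbf{Part (3).} Your formula $\dim M/d\rho(X)M=\lceil(m+1)/2\rceil$ for the principal nilpotent is wrong. Since $S^m\bk^2$ is an irreducible $\mathfrak{sl}_2$-module, $d\rho(J_2)$ has one-dimensional cokernel regardless of $m$; thus $G^{J_2}$ (which is $2$-dimensional abelian) always has finitely many orbits on this $1$-dimensional quotient, and the regular nilpotent cannot detect $m\ge 3$. The paper instead checks $X=0$ here: using a nullcone dimension bound (Hochster--Roberts plus a $G/B$-bundle estimate), it shows $\dim M^{(m)}/\!\!/\GL_2\ge\lfloor m/2\rfloor$, hence infinitely many closed $\GL_2$-orbits on $M$ itself once $m\ge 3$.

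\textbf{Part (2).} Here your outline and the paper's diverge in style---you invoke Sato--Kimura and propose eliminating $S^2\bk^n,\Lambda^2\bk^n$ at the regular nilpotent, while the paper restricts the highest weight step by step via explicit weight-vector families (Lemmas~\ref{lem C} and~\ref{lem D}) always at $J_n$. Be warned that your proposed elimination is delicate: for $M=S^2\bk^3$ one computes $M/d\rho(J_3)M$ is $2$-dimensional and $G^{J_3}$ already acts with only three orbits there, so a naive orbit count at the principal nilpotent does not immediately produce a continuous family; you would need a more careful construction of the invariant you allude to.
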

The basic observation here is a fact that for any $(X,w)\in\ucaln$, $(X,w+\im X)\in \textsf{Ad}(G)(X,w)$ where $\im X$ stands for the image of $\textsf{d}(\rho)X$ in $M$ (see Lemma \ref{lem A}), consequently the analysis of the $G_X$-action on $M$ (modulo $\im X$) becomes an important ingredient in the proof of Theorem \ref{thm: zerosec thm} where $G_X$ is the centralizer  of $X$ in $G$.
The technique in most parts of the proof of the above theorem is elementary except the $\GL_2$ case  where we need to verify the existence of nonzero dimension of an algebraic quotient associated with any given polynomial irreducible representation of dimension bigger than $3$, in order to show the infiniteness of orbits. To achieve this, we take use of some classical geometric arguments (see Lemma \ref{lem: gl2 homo dim}).

\subsubsection{Exceptional case} Then we turn to the study of $\ucaln$ when it has finite orbits.
In Theorem \ref{thm: zerosec thm}(3), $\ucaln$ has finite nilpotent orbits when it is associated with $\GL_2$ and a $3$-dimensional module $M$. In the second section, we give a precise description of all nilpotent orbits and closures (see Proposition \ref{prop: 3d prop}).

\subsubsection{Enhanced nilpotent orbits associated with natural modules}
Some basic computations enables us to
establish some close relation between the nilpotent orbit theories associated with linear dual modules presented in (iii) of  Theorem \ref{thm: zerosec thm}(2) and associated with natural modules presented there (ii)  (see \S\ref{sec: Fourier trans}). In the present paper, for further investigation of nilpotent orbits we are only concerned with the case when the enhanced structures are associated with natural modules.  As to the case of dual modules,  one can expect to some interesting results in parallel, which is worthwhile to be investigated somewhere else.

The exceptional case occurs only when $G=\GL_2$, and $M$ is a $3$-dimensional irreducible module. We will specially deal with it in \S\ref{sec: exceptional case}.
 The remaining main body of this paper 
 is devoted  to the investigation of enhanced nilpotent orbits and their closures for $\uG=\GL(V)\ltimes_\eta V$ where $(\eta,V)$ is the natural representation.

 The significant extra information of the $\uG$-conjugacy classes in $\ucaln$,  apart from the control of $\GL(V)$-conjugation, comes from the mixed information from the representation on $V$, especially from $\im\textsf{d}\eta(X')\subset V$ for $X'\in \textsf{Ad}(G)X$ with $G=\GL(V)$. Precisely speaking, the $\uG$-orbits in $\ucaln$ can be parameterized by a set $\scrpe$  of all so-called enhanced partitions of $n$, which can be described as  pairs $(\lambda,[q])$ with $\lambda$ running through the set $\scrp_n$ of all partitions of $n$ ($n=\dim V$), being attached with its so-called $q$th lowerings (see Theorem \ref{prop: altn nilp cls}, and \S\ref{sec: lowerings} for the notion ``lowerings"). The pairs $(\lambda,[q])$ are simply written as $\lambda[q]$ in the whole article, the latter of which is  nominated  an enhanced partition of $n$.
 As can be expected,  the closures of these $\uG$-orbits $\co_{\lambda[q]}$ are controlled by the original $\GL_n$-orbits along with the ``lowerings" of partitions (see Corollary \ref{cor: closure M}),  with topological relation determined by some natural order of enhanced partitions (see Proposition \ref{prop: closure rel}).

Along this line, the classical theory  of nilpotent orbits and Springer theory can be extended in the frame of the enhanced reductive groups. Especially, there are ``enhanced" partial flag varieties and ``enhanced" Springer resolutions so that ``enhanced Springer fibers" can be expected to play key roles in the study of  modular representations of enhance reductive Lie algebras, and further go into that of $\mathscr{L}$ although the enhanced Springer fibers turn out to coincide with the corresponding classical Springer fibers in the case $\uG=\GL(V)\ltimes_{\eta}V$. In this note, we take a look into the enhanced Springer fibers and their cohomology, finally demonstrating  a $\uG$-equivariant intersection cohomology decomposition on $\ucaln$ along the closures of their $\uG$-orbit decomposition (see Theorem \ref{thm: 4.1}).

\subsection{Relation to the work on $\GL(V)$-orbits in $\caln\times V$} This work is much related to the earlier works of Achar-Henderson and of Travkin ({\sl etc.}) when $M$ is the natural module (see \cite{AH}, \cite{FGT}, \cite{Tr}). In \cite{AH} and \cite{Tr}, Achar-Henderson and Travkin independently investigated the $\GL(V)$-orbits on $\caln\times V$, with motivations coming from the consideration in type $A$ of Syu Kato's work on  ``exotic Springer correspondence" in type $C$, and the study involving  some mixed flag varieties, respectively. Their parameterizations of $\GL(V)$-orbits are the set of bi-partitions of $n$. Our parameterizations of $\uG$-orbits are closely related to and compatible with theirs (see \S\ref{sec: comparison}).

	\subsection*{\textsc{Acknowledgement}}	This work is partially supported by the National Natural Science Foundation of China (Grant No. 12071136, and 12271345), and  by Science and Technology Commission of Shanghai Municipality (No. 22DZ2229014).  The authors are thankful to the anonymous referee for helpful comments.

\section{Finiteness of  nilpotent orbits under the enhanced group action}
Throughout the paper,  all algebraic groups and  vector spaces are assumed to be over an algebraically closed {field $\bk$ unless stated otherwise.}

\subsection{Enhanced algebraic groups and enhanced Lie algebras}\label{sec: enhanced gps}
Let $G$ be a connected algebraic group over $\bk$, and $(M,\rho)$ be a finite-dimensional representation of $G$ on the vector space  $M$.  Consider the product variety $G\times M$. Regard $M$ as an additive algebraic group. The irreducible variety $G\times M$ is endowed with a cross product structure denoted by $G\times_\rho M$,  by defining for any $(g_1,v_1), (g_2,v_2)\in  G\times M$
\begin{equation}\label{product}
(g_1,v_1)\cdot (g_2,v_2):=(g_1g_2, \rho(g_1)v_2+v_1).
\end{equation}
Then a straightforward computation yields that  $\underline{G}:=G\times_\rho M$ becomes an algebraic group with unity $(e,0)$ for the unity $e\in G$, and $(g,v)^{-1}=(g^{-1}, -\rho(g)^{-1}v)$.  Both $G$ and $M$ naturally become closed subgroups of $\underline{G}$, and $M$ is normal. { When $G$ is reductive,} the unipotent radical of $\uG$ is just the closed subgroup $e^M:=\{e\}\times M$, which is identified with $M$ (an additive algebraic group).  We call $\underline{G}$ \textsl{ an enhanced algebraic group associated with the representation space $M$}.
Set $\ggg=\Lie(G)$. Then  $(M, \mathsf{d}(\rho))$ becomes a representation of $\ggg$.  And $\underline{\ggg}:=\Lie(\underline{G})=\ggg\times_{\textsf{d}\rho}M\cong \ggg\oplus M$ with the following Lie bracket
$$[(X_1,v_1),(X_2,v_2)]:=([X_1,X_2], \mathsf{d}(\rho)(X_1)v_2-\mathsf{d}(\rho)(X_2)v_1),$$
which is called an enhanced Lie algebra. Naturally, $\underline{\ggg}$ is an adjoint $\underline{G}$-module with
\begin{equation}\label{Ad structure}
\textsf{Ad}(g, v)(X, w)=(\textsf{Ad}(g)X, {-\textsf{d}\rho}(\textsf{Ad}(g)X)v+\rho(g)w),\,\,\forall\,g\in G, X\in\ggg, v, w\in M.
\end{equation}
In the following, we always simply write $X. v$ or $Xv$ for $\textsf{d}\rho(X)v$, and $g. v$ or $gv$ for $\rho(g)v$ for $X\in \ggg, g\in G$ and $v\in M$ if the context is clear.

From now on, we always assume that $G=\GL_n$ is the general linear algebraic group of rank $n$, and $M$ is a finite-dimensional rational $G$-module. Let
$\caln$ and $\ucaln$ be the nilpotent cones of $\ggg=\gl_n$ and the nilpotent cone of $\underline{\ggg}=\gl_n\times M$ { (the latter of which is  by definition the set of   nilpotent elements in $\underline{\ggg}$ too)}, respectively. Then
\begin{align}\label{eq: gen enh nil cone}
\ucaln=\caln\times M.
\end{align}
The following fact is fundamental.

\begin{lemma}\label{lem: basic lem} Let $G=\GL_n$ and  $M\cong\bk^n$ be the natural module of $G$, and $\uG$ the associated enhanced reductive group.  For any $(X,w)\in \ucaln$, denote $\ugg_{(X,w)}=\{(Y,u)\in \gl_n\times M\mid \textsf{ad}(X,w)(Y,u)=0\}$ and $\uG_{(X,w)}=\{(g,v)\in \uG\mid \Ad(g,v)(X,w)=(X,w)\}$. Then the following statements hold.
\begin{itemize}
\item[(1)] $\ugg_{(X,w)}=\Lie (G_{(X,w)})$
\item[(2)] The centralizer $\uG_{(X,w)}$ is connected.
\end{itemize}
\end{lemma}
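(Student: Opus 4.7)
The plan is to exploit the semidirect product structure $\uG = G \ltimes_\rho M$ with $M = \bk^n$ the natural module. The first-factor projection $\uG \to G$ restricts to a morphism of algebraic groups $\pi: \uG_{(X,w)} \to G_X$. From the adjoint action formula (\ref{Ad structure}), $(g, v) \in \uG_{(X,w)}$ if and only if $g \in G_X$ and $Xv = gw - w$; hence $\ker \pi$ consists of the pairs $(e,v)$ with $v \in \ker X$, while $\im \pi = H := \{g \in G_X : gw - w \in \im X\}$. This yields a short exact sequence
\[
1 \longrightarrow \ker X \longrightarrow \uG_{(X,w)} \longrightarrow H \longrightarrow 1
\]
of algebraic groups, and reduces the study of $\uG_{(X,w)}$ to that of the two endpoints.

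For (2), since $\ker X$ is an affine space (hence connected), it suffices to show $H$ is connected. First, $G_X$ is connected: it is the unit group of the associative algebra $\mathrm{End}_{\bk[X]}(M) = C_{\gl_n}(X)$, which is a linear subspace of $\gl_n$, so $G_X$ is a nonempty open subset of an irreducible variety. Since elements of $G_X$ commute with $X$, the subspace $\im X$ is $G_X$-stable, so $G_X$ acts linearly on $\bar M := M/\im X$. Using the Jordan decomposition of $X$, I would show that the induced homomorphism $q: G_X \to \GL(\bar M)$ has image equal to a parabolic subgroup $P \subseteq \GL(\bar M)$ determined by the Jordan type of $X$ (block lower-triangular, with block sizes matching the multiplicities of the Jordan block sizes), with connected unipotent kernel. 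Then $H = q^{-1}(\mathrm{Stab}_P(\bar w))$, so the connectedness of $H$ reduces to that of $\mathrm{Stab}_P(\bar w)$, which I would verify by an explicit coordinate computation in the Jordan basis.

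For (1), the inclusion $\Lie(\uG_{(X,w)}) \subseteq \ugg_{(X,w)}$ always holds, and equality is equivalent to smoothness of the stabilizer $\uG_{(X,w)}$. Compatibly with the short exact sequence above, $\dim \ugg_{(X,w)} = \dim \ker X + \dim \ggg_{X,w}$ where $\ggg_{X,w} := \{Y \in \ggg_X : Yw \in \im X\}$, while $\dim \uG_{(X,w)} = \dim \ker X + \dim H$. So matters reduce to the identity $\dim H = \dim \ggg_{X,w}$, equivalently to smoothness of $\mathrm{Stab}_P(\bar w)$ inside $P$. The chief obstacle is precisely this combined connectedness-and-smoothness statement for the stabilizer of a vector under the natural linear action of the parabolic $P$; I expect to resolve it by direct calculation in the Jordan basis, or alternatively by checking separability of the orbit map $\uG \to \uG \cdot (X,w)$ through its differential at $(e,0)$, which amounts to verifying that the image of $\ad(X,w): \ugg \to \ugg$ has the expected dimension.
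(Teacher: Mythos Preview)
Your approach via the short exact sequence $1 \to \ker X \to \uG_{(X,w)} \to H \to 1$ and the further reduction $H = q^{-1}(\mathrm{Stab}_P(\bar w))$ is correct and would work, but the paper takes a much shorter route that avoids the deferred Jordan-basis calculation entirely. Since $M = \bk^n$ is the natural module, $\uG$ embeds as a closed subgroup of $\GL_{n+1}$ via $(g,v) \mapsto \left(\begin{smallmatrix} g & v \\ 0 & 1 \end{smallmatrix}\right)$, and $(X,w) \in \ugg$ corresponds to the matrix $B = \left(\begin{smallmatrix} X & w \\ 0 & 0 \end{smallmatrix}\right) \in \gl_{n+1}$. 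The condition $\Ad(g,v)(X,w)=(X,w)$ becomes the linear commutation condition $AB = BA$ for $A=\left(\begin{smallmatrix} g & v \\ 0 & 1 \end{smallmatrix}\right)$, so the set of all matrices of the form $\left(\begin{smallmatrix} Y & u \\ 0 & 1 \end{smallmatrix}\right)$ commuting with $B$ is exactly the affine translate $I_{n+1} + \ugg_{(X,w)}$. Hence $\uG_{(X,w)}$ is the principal open subset of this irreducible affine space defined by $\det \neq 0$: it is therefore irreducible (yielding (2)) and of dimension $\dim \ugg_{(X,w)}$, which combined with the automatic inclusion $\Lie(\uG_{(X,w)}) \subset \ugg_{(X,w)}$ gives (1). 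Your structural decomposition is more transparent about the role of the semidirect product and would in principle adapt to other modules $M$, but for the natural module the paper's embedding trick dispatches both statements in one stroke and makes the case analysis you anticipate for $\mathrm{Stab}_P(\bar w)$ unnecessary.
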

\begin{proof}
Set $\ggg=\gl_n$, and denote by $\ggg_X$ and $G_X$ the centralizers of $X$ in $\ggg$ and in $G$ respectively. By definition, we can regard $\uG$ as a connected closed subgroup of $\GL_{n+1}$, and naturally $\uG_{(X,w)}=\uG\cap {\GL_{n+1}}_{(X,w)}$. Recall a basic fact for general linear groups (see, for example \cite[\S2.3]{jan2}): $\Lie({\GL_{n+1}}_{(X,w)})={\gl_{n+1}}_{(X,w)}$ for any $(X,w)\in\ugg$. So we have $\Lie(\uG_{(X,w)})\subset \ugg_{(X,w)}$. Note that $\uG_{(X,w)}$ is actually the principal open subvariety of $\ugg_{(X,w)}$  by $\det$. Hence  the centralizer $\uG_{(X,w)}$ is irreducible, yielding  the statement (2). What is more, $\dim\uG_{(X,w)}=\dim\ugg_{(X,w)}$. So $\dim\Lie(\uG_{(X,w)})=\dim\ugg_{(X,w)}$, correspondingly $\Lie(\uG_{(X,w)})=\ugg_{(X,w)}$.  Hence (1) is proved.
\end{proof}

\subsection{A {criteria Lemma on the finiteness of nilpotent orbits in} enhanced Lie algebras}
We first look at what the nilpotent orbits look like in the enhanced Lie algebra $\underline{\ggg}$. For any given nilpotent element $(X,w)\in \ucaln$ we suppose the Jordan standard matrix of $X$ under $\GL_n$-conjugation is $J=J_\lambda$ for $\lambda=(\lambda_1\geq \lambda_2\geq\ldots \geq\lambda_r>0)$ with $\lambda\in \mathscr{P}_n$ for some $r\in \bbn$, where $\scrp_n$ denotes the set of all partitions of $n$, i.e.
$$\mathscr{P}_n:=\Big\{\mu=(\mu_1\geq \mu_2\geq\ldots \geq\mu_m>0)\mid m\in\mathbb{N}, \sum_{i=1}^m\mu_i=n\Big\}$$
with $\aleph_n:=|\mathscr{P}_n|$.
This means that
\begin{equation*}
J=\left( \begin{array}{cccc}
 J_{\lambda_1} & \empty &\empty &\empty \cr
\empty &J_{\lambda_2}   &\empty &\empty\cr
\empty &\empty &\cdots &\empty \cr
\empty&\empty&\empty& J_{\lambda_r}
\end{array}\right),
\end{equation*}
where for $i=1,\ldots, r$
\begin{equation*}
J_{\lambda_i}=\left( \begin{array}{ccccc}
 0 &1 & 0 &\empty &\empty   \cr
\empty &0 &1 &\empty &\empty\cr
\empty&\empty&\cdots& \empty&\empty \cr
\empty&\empty&\empty& 0&1\cr
\empty&\empty&\empty&\empty&0
\end{array}\right)_{\lambda_i\times\lambda_i}.
\end{equation*}

Set $G_J:=\{g\in G\mid \textsf{Ad}(g)J=J\}$, the centralizer of $J$ in $G$. Then $G_J$ is a closed subgroup of $G$. Parallel to $G_J$, we have $\ggg_J:=\{Y\in \ggg\mid [Y,J]=0\}$.
Set $\jM=\textsf{d}\rho(J)(M)$. Then $\jM$ is a $G_J$-module. (In the same sense, we have the notations $G_X$ and $\ggg_X$ for any $X\in \ggg$ with the same meaning as above.)

We first have the following {criteria on the finiteness of enhanced nilpotent orbits, which is critical important in the subsequent arguments}.

\begin{lemma}\label{lem A} {(Criteria Lemma)}   Keep the notations as above. The following statements hold.
\begin{itemize}
\item[(1)] For any $(J,w)\in \ucaln$, $(J, w+\jM){\subset} \textsf{Ad}(\uG)(J,w)$.
\item[(2)]Denote $\overline \jM=M\slash \jM$. Then $\uG$-orbits in $\ucaln$ are finite if and only if $G_J$-orbits in $\overline\jM$ are finite for {all} $J=J_\lambda$ with  $\lambda\in \mathscr{P}_n$.
\end{itemize}
\end{lemma}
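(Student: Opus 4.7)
The plan is to dispatch (1) with a single application of the adjoint--action formula \eqref{Ad structure}, and then use (1) to reduce (2) to the orbit combinatorics of $G_J$ acting on $\overline{\jM}$.

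For (1), given $u \in \jM$, I would pick $v \in M$ with $\textsf{d}\rho(J)v = u$ and apply \eqref{Ad structure} to the unipotent element $(e,-v) \in \uG$:
\[
\Ad(e,-v)(J,w) \;=\; \bigl(J,\; \textsf{d}\rho(J)v + w\bigr) \;=\; (J,\, w + u).
\]
Letting $u$ range over $\jM$ yields $\{J\} \times (w + \jM) \subseteq \Ad(\uG)(J,w)$, which is exactly assertion (1).

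For (2), the first step is to exploit the $G$-part of $\uG$: from \eqref{Ad structure}, $\Ad(g,0)(X,w) = (\Ad(g)X,\, \rho(g)w)$, so every element of $\ucaln$ is $\uG$-conjugate to some $(J_\lambda, w')$ with $\lambda \in \scrp_n$. Since $\scrp_n$ is finite, finiteness of $\uG$-orbits on $\ucaln$ is equivalent to finiteness, for each $\lambda$, of $\uG$-orbits among pairs of the form $(J_\lambda, w)$. The plan is then to set up a bijection between the set of such orbits and the set of $G_J$-orbits on $\overline{\jM}$ with $J = J_\lambda$. Unpacking $\Ad(g,v)(J,w_1) = (J,w_2)$ via \eqref{Ad structure}, the first coordinate forces $g \in G_J$ while the second gives $w_2 = \rho(g)w_1 - \textsf{d}\rho(J)v$; reducing modulo $\jM$ yields $\overline{w_2} = g \cdot \overline{w_1}$. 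Conversely, given $g \in G_J$ with $\rho(g)w_1 - w_2 \in \jM$, I would pick $v$ with $\textsf{d}\rho(J)v = \rho(g)w_1 - w_2$ and reverse the computation; combined with part (1), which ensures $w$ only matters modulo $\jM$, this delivers the claimed bijection.

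The one genuine check, which I expect to be the only technical obstacle, is that $G_J$ actually acts on $\overline{\jM}$, i.e.\ $\rho(G_J)\jM \subseteq \jM$. This follows by applying $\textsf{d}\rho$ to the relation $\Ad(g)J = J$ and using the standard intertwining $\textsf{d}\rho(\Ad(g)Y) = \rho(g)\textsf{d}\rho(Y)\rho(g)^{-1}$: one obtains $\rho(g)\textsf{d}\rho(J) = \textsf{d}\rho(J)\rho(g)$, so $\rho(g)$ preserves $\textsf{d}\rho(J)M = \jM$. Once this is in place, summing the orbit counts over the finite index set $\scrp_n$ completes (2).
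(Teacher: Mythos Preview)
Your proposal is correct and follows essentially the same route as the paper: part (1) via a direct application of the adjoint formula \eqref{Ad structure}, and part (2) via the bijection between $\uG$-orbits in $\ucaln$ and pairs $(\lambda,\text{$G_J$-orbit in }\overline{\jM})$. The paper's own proof is terser---it simply declares the bijection $\Phi$ and asserts that $\jM$ is a $G_J$-module just before the lemma---whereas you supply the explicit verification that $\rho(G_J)$ preserves $\jM$ via the intertwining identity; this extra detail is a welcome addition but does not change the underlying argument.
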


\begin{proof} (1) It is directly deduced from the definition.

(2) Recall that the set of $G$-orbits in $\caln$ is finite, and it is one to one correspondence to $\mathscr{P}_n$. For  $J=J_\lambda$ with  $\lambda\in \mathscr{P}_n$, let $\Upsilon_{\lambda}$ be the set of representatives in $M$ of the $G_J$-orbit in $\overline\jM$. Then it follows from  (\ref{Ad structure}) that we have the following bijection:
$$
\aligned
\Phi: \,\, \{(\lambda, w_{\lambda})\mid \lambda\in\mathscr{P}_n, w_{\lambda}\in \Upsilon_{\lambda}\}
&\longrightarrow \ucaln/  \uG \cr (\lambda, w_{\lambda}) &\mapsto
 \textsf{Ad}(\uG) (J_{\lambda}, w_{\lambda}).
\endaligned
$$
This implies that $\uG$-orbits in $\ucaln$ are finite if and only if $\Upsilon_{\lambda}$ is finite for any $\lambda\in \mathscr{P}_n$, as desired.
\end{proof}

\subsection{Natural and dual natural representations}\label{sec: Fourier trans} For $V=\bk^n$, there is a standard basis {$\mathfrak{E}:=\{e_i\}_{i=1,\ldots,n}$}. Naturally, the linear dual space $V^*$ admits the dual basis $\{e_i^*\}_{i=1,\ldots,n}$ defined via $e_i^*(e_j)=\delta_{ij}$, $i,j=1,\ldots,n$. {Here and further $\delta_{ij}$ denotes the Kronecker delta function, which means it takes value $0$ if $i\ne j$, and $1$ if $i=j$.}  For any $w\in V$, it can be expressed $w=\sum_{i=1}^n a_ie_i$ or $w=(e_1,\ldots,e_n)\alpha$ for $\alpha=(a_1,\ldots,a_n)^{\texttt{t}}$ where $\texttt{t}$ stands for the transpose of matrices.

{
Then we  introduce a Fourier transform $\scrf$ from $V$ to $V^*$ which is defined via sending $w=\sum_{i=1}^n a_ie_i$ to $\scrf(w):=\sum_{i=1} a_ie_i^*$.  By definition, $\scrf$ is a linear isomorphism.  }

For $G=\GL_n$, we have the natural representation $\eta$ on $V$ and its dual representation $\eta^\sharp$ on $V^*$. Consider the enhanced groups $\uG=G\ltimes V$ and $\uG^\sharp:=G\ltimes V^*$. The corresponding Lie algebras are denoted by $\ugg$ and $\ugg^\sharp$ respectively. {For convenience of arguments, we suppose $V=\bbk^n$ with a standard basis $(e_1,\ldots,e_n)$. With this basis,   $\text{End}_\bbk(V)$ is in a one-to-one correspondence with the matrix ring $\text{Mat}_{n\times n}(\bbk)$ of size $n\times n$ over $\bbk$. This means, for any $\sigma\in \text{End}(V)$, we can write $\sigma(e_1,\ldots,e_n)=(e_1,\ldots,e_n)A_\sigma$ for $A_\sigma\in \text{Mat}_{n\times n}$. So we can talk $\sigma^{\texttt{t}}$ which means the transformation corresponding to $A_\sigma^{\texttt{t}}$.
In this setup, the following observation is clear.}

\begin{lemma}\label{lem: Fourier transform} Consider the enhanced groups   $\uG$ and $\uG^\sharp$. The following statements hold.
\begin{itemize}

\item[(1)] For any $g\in G$, {$\eta^\sharp(g)\circ \scrf=\scrf\circ \eta(g^{-\texttt{t}})$ in $\Hom_{\bk}(V,V^*)$. For any $X\in \gl_n$,
    $\textsf{d}(\eta^\sharp)(X)\circ\scrf=-\scrf\circ \textsf{d}(\eta)(X^\texttt{t})$ in  $\Hom_{\bk}(V,V^*)$.  More precisely,}
for $X\in\gl_n$ and $g\in \GL_n$,  the following equations hold:
    \begin{align}\label{eq: dual formula 1}
    \eta^\sharp(g)\scrf(v)=\scrf(\eta(g^{-\texttt{t}})v)
    \end{align}
    and
    \begin{align}\label{eq: dual formula 2}
    \textsf{d}(\eta^\sharp)(X)\scrf(v)=\scrf(\textsf{d}(\eta)(-X^{\texttt{t}}v)) \text{ with } v\in V. \end{align}

\item[(2)] For any two elements $(X_i,w_i)\in \ugg$ ($i=1,2$), $(X_1,w_1)$ lies in the $\uG$-orbit of $(X_2,w_2)$ if and only if  {$(-X^{\texttt{t}}_1,-\scrf(w_1))$ lie in the $\uG^\sharp$-orbit of $(-X^{\texttt{t}}_2,-\scrf(w_2))$.}
\end{itemize}
\end{lemma}

\begin{proof}
(1) { By the definition of  linear dual modules over algebraic groups, the first part is directly deduced. We need to verify the second part. We still assume $V=\bbk^n$ with a canonical basis $(e_1,\ldots,e_n)$. For any given $v\in V$ , we can write
\begin{equation*}
v=(e_1,\ldots,e_n)\left( \begin{array}{c}a_1\cr a_2\cr \vdots\cr a_n
\end{array}\right).
\end{equation*}
Furthermore,
we can write $g(e_1,\ldots,e_n)=(e_1,\ldots,e_n)A_g$, and $X(e_1,\ldots,e_n)=(e_1,\ldots,e_n)A_X$. Note that $\scrf(e_i)(e_j)=\delta_{ij}$, and $\scrf$ is a linear isomorphism.
{For any $w\in V$} with $w=(v_1,\ldots,v_n)(b_1,\ldots,b_n)^{\texttt{t}}$, we then have
{
\begin{align*}
&\eta^\sharp(g)\scrf(v)(w)\cr
=&\scrf(v) (\eta(g^{-1})w)\cr
=&(\scrf(e_1,\ldots,e_n)(a_1,\ldots,a_n)^{\texttt{t}})
((e_1,\ldots,e_n)A_g^{-1}(b_1,\ldots,b_n)^{\texttt{t}})\cr
=&(a_1,\ldots,a_n) (A_g^{-1}(b_1,\ldots,b_n)^{\texttt{t}})\cr
=&(\scrf(e_1,\ldots,e_n)A_g^{-\texttt{t}}(a_1,\ldots,a_n)^{\texttt{t}})((e_1,\ldots,e_n)
(b_1,\ldots,b_n)^{\texttt{t}})\cr
=&\scrf(\eta(g^{-\texttt{t}})v)w
\end{align*}
}
from which we deduce  (\ref{eq: dual formula 1}). Similarly, one can prove (\ref{eq: dual formula 2}).
}

(2) Suppose there exists $(g,v)\in \uG$ such that $(X_1,w_1)=\texttt{Ad}(g,v)(X_2,w_2)$.  With aid of the statement (1), by computation we have {$(-X^{\texttt{t}}_1,-\scrf(w_1))=\texttt{Ad}
(g^{-\texttt{t}},-\scrf(v))(-X^{\texttt{t}}_2,-\scrf(w_2))$.}  Note that $\scrf$ is a linear isomorphism. Hence the reverse is also true.
The proof is completed.
\end{proof}

\begin{remark}\label{rem: identical orbti thy}
(1) Note that $\caln(\ugg^\sharp)=\caln(\ggg)\times V^*$ and $\caln(\ugg)=\caln(\ggg)\times V$ (see (\ref{eq: gen enh nil cone})). {
 With aid of the above lemma,  it's easily known by a straightforward computation that there is an algebraic group isomorphism $\maltese$ from $\uG$ onto $\uG^\sharp$ defined via
 $$\maltese: (g,v)\mapsto (g^{-\texttt{t}}, -\scrf(v)).$$
From $\maltese$, there is an induced isomorphism $\mho$ of Lie algebras from $\ugg$ to $\ugg^\sharp$ defined via
$$\mho: (X,v)\mapsto (-X^{\texttt{t}}, -\scrf(v)).$$
This $\mho$ carries  $\caln(\ugg)$ onto $\caln(\ugg^\sharp)$.

(2)  Consequently, we can say such $\mho|_{\caln(\ugg)}$ (denoted by $\varphi$) is a $\uG$-equivariant isomorphism in the sense that the isomorphism $\varphi$ of varieties  satisfies
$$\varphi( (g,u)(X,v))=\maltese((g,u))\varphi((X,v))$$
for $(g,u)\in \uG$.
 Hence, there is a one-to-one correspondence  between the set of $\uG$-orbits in $\caln(\ugg)$ and the set of $\uG^\sharp$-orbits in $\caln(\ugg^\sharp)$.}

\end{remark}

\subsection{Finiteness of  nilpotent orbits under the enhanced group action} \label{sec: general fininteness}
In the next subsections, we investigate the finiteness problem for the enhanced nilpotent cones associated with $\GL_n$ and a rational module $\GL_n$-module $M$.

\subsubsection{}
Note that $M$ has a finite length of composition series. Denote this length by $l:=l(M)$. Then we can express the corresponding element in the Grothendieck ring of the category of finite-dimensional rational $G$-modules as: $[M]=\sum_{i=1}^l [L(\mu^{(i)})]$ with $\mu^{(i)}$ being a highest weight in the set of dominant integral weights such that $L(\mu^{(1)})$ is an irreducible  submodule of $M$ and $L(\mu^{(l)})$ is an irreducible quotient of $M$. For example, when $\text{ch}(\bk)=0$, $M$ satisfies the complete reducibility, for which the choice of $L(\mu^{(1)})$  and $L(\mu^{(l)})$ is very flexible.

In the following arguments, denote $J=J_n$.
Set $E=\diag(1,\ldots,1)$, the identity matrix.
Then $G_J=\textbf{G}_m\ltimes \textbf{R}$ where $\textbf{G}_m$ is generated by $g_c:=cE$ with $c(\ne 0)\in \bk$, and $\textbf{R}$ is a unipotent closed subgroup of $\GL_n$  generated by
$g^+(k,c):=E+ c\sum_{i=1}^{n-k}E_{i,k+i}$ with $k$ running over $\{1,\ldots,n-1\}$, and $c (\ne 0)\in\bk$.

 \begin{lemma}\label{lem B} 
 If $\ucaln$ has finite orbits, then $M$ must be an irreducible module of $\GL_n$.
  \end{lemma}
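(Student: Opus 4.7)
My plan is to prove the contrapositive: assuming $M=\bigoplus_{i=1}^{t}L(\mu^{(i)})$ with $t\geq 2$ irreducible summands, I will exhibit a one-parameter family of elements of $\ucaln$ lying in pairwise distinct $\uG$-orbits, contradicting finiteness. By Lemma \ref{lem A}(2) it suffices to exhibit, for a single $\lambda\in\scrp_n$, infinitely many $G_{J_\lambda}$-orbits on $M/J_\lambda M$; I will take $\lambda=(n)$, so that $J:=J_{(n)}=J_n$ is the regular nilpotent and $G_J=\textbf{G}_m\ltimes\textbf{R}$ with $\textbf{G}_m=\{cE\}$ central and $\textbf{R}$ the connected unipotent subgroup introduced above.

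First I would observe that the direct sum descends to the quotient: since each summand $L(\mu^{(i)})$ is $G$-stable, $\overline{\jM}=\bigoplus_{i=1}^{t}\overline{L_i}$ with $\overline{L_i}:=L(\mu^{(i)})/JL(\mu^{(i)})$, and each $\overline{L_i}$ is nonzero because $J$ acts nilpotently on the nonzero space $L(\mu^{(i)})$, so $\ker J\neq 0$ and rank--nullity forces $\coker J\neq 0$. Moreover the central $\textbf{G}_m$ acts on $\overline{L_i}$ through the character $c\mapsto c^{d_i}$ with $d_i:=|\mu^{(i)}|$ the degree of the polynomial representation.

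The core of the argument is then a rigidity computation. Fixing nonzero classes $\bar w_1\in\overline{L_1}$ and $\bar w_2\in\overline{L_2}$ and setting $\bar v_s:=\bar w_1+s\bar w_2$ for $s\in\bk$, any relation $g\cdot\bar v_s=\bar v_{s'}$ splits, by preservation of the direct sum, into $g\cdot\bar w_1=\bar w_1$ and $g\cdot\bar w_2=(s'/s)\bar w_2$. Decomposing $g=c\cdot u$ with $c\in\bk^\times$ and $u\in\textbf{R}$, I use the fact that the only eigenvalue of a unipotent operator is $1$, so any line that $u$ stabilizes it fixes pointwise. The two equations therefore force $u\cdot\bar w_i=\bar w_i$ together with $c^{d_1}=1$ and $s'/s=c^{d_2}$. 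When every $d_i\geq 1$, the constraint $c^{d_1}=1$ confines $c$ to the finite subgroup of $d_1$-th roots of unity in $\bk^\times$, and hence $s'/s$ to a finite set; each $\bar v_s$ is therefore $G_J$-equivalent to only finitely many $\bar v_{s'}$, producing infinitely many orbits as $s$ varies in $\bk^\times$. If instead some $L(\mu^{(i)})$ is the trivial module $\bk$, the projection $\overline{\jM}\twoheadrightarrow\overline{L_i}=\bk$ is itself a $G_J$-invariant scalar, and the family $c\bar w_i+\bar w_j$ with $j\neq i$ already has infinitely many orbits distinguished by this projection.

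The principal delicate point, and my main obstacle, is the unipotent-rigidity step, which really uses that $\textbf{R}$ is unipotent to prevent any nontrivial character from arising via its action on an eigenvector. Once this is in hand, the remainder is elementary character theory of the one-dimensional torus $\textbf{G}_m$; notably, no structural result about the (potentially nontrivial) $\textbf{R}$-action on each $\overline{L_i}$ is needed, which is what makes this approach uniform across all polynomial summands.
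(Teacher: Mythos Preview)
Your proof is correct and follows a genuinely different route from the paper's. Both arguments take the regular nilpotent $J=J_n$, reduce via Lemma~\ref{lem A}(2) to exhibiting infinitely many $G_J$-orbits on $\overline{\jM}$, and exploit the structure $G_J=\textbf{G}_m\ltimes\textbf{R}$. From there the paper works with the \emph{lowest weight vectors} $v_0^{(i)}$ of the summands and compares coefficients by projecting to the lowest weight spaces, using that $J$ and the generators of $\textbf{R}$ are strictly upper-triangular and hence strictly raise weights, so neither $\jM$ nor $(u-1)v_0^{(i)}$ meets the lowest weight space. You instead allow \emph{arbitrary} nonzero classes $\bar w_i\in\overline{L_i}$ and replace the weight-space projection by the abstract fact that a unipotent operator has $1$ as its only eigenvalue. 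Your version is cleaner in that it needs no weight bookkeeping and isolates exactly what in $G_J$ drives the rigidity (the central torus contributes only a finite-order character once one coordinate is pinned; the unipotent radical contributes no nontrivial scalar at all). The paper's version, by contrast, dovetails more visibly with the highest-weight arguments used in the subsequent Lemmas~\ref{lem C} and~\ref{lem D}.
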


  \begin{proof} {We prove the lemma by contradiction.} Suppose $M$ is not irreducible. Then the length $l>1$. As above, we suppose that $M$ has composition factors  $\{L(\mu^{(i)})\}_{i=1,\ldots,l}$.  Recall that $\ucaln=\caln\times M$. And $L(\mu^{(i)})$ has a minimal weight vector $v_0^{(i)}$, i.e, $L(\mu^{(i)})=U(\nnn^+)v_0^{(i)}$. {Here and further, $\nnn^+$ denotes the Lie subalgebra of $\gl_n$ consisting all strict upper triangular matrices}. By the choice,  $M$ has an irreducible  submodule $L(\mu^{(1)})$,  and an irreducible quotient $L(\mu^{(l)})$. So there is a natural homomorphism from $M$ onto $L(\mu^{(l)})$. Take a weight vector $u_0^{(l)}\in M$ such that  $u_0^{(l)}$ is a preimage of $v_0^{(l)}$ in $M$. For unification of notations, denote $u_0^{(1)}=v_0^{(1)}$ which belongs to $M$.

  Consider $\lambda=(n)\in\mathscr{P}_n$, and $J=J_\lambda$, which is equal to $J_n$, the Jordan standard matrix of regular $G$-nilpotent elements for $\ggg$.  
  Then there exists a nonzero $\chi\in\bbk$ such that $\chi^{m}$ are different when $m$ varies in $\bbn$ if $\bbk$ is either of characteristic $0$, or of characteristic $p>0$ but not identical to $\overline{\bbf}_p$. And for any $m\in \bbn$ we can take a primitive element $\chi^{(m)}$ of $\bbf_{p^m}\subset\bbk$ if $\bbk=\overline{\bbf}_p$. Obviously, those $\chi^{(m)}$ are different when $m$ varies in $\bbn$.

By abuse of notations, for the above fixed  $\chi$ or $\chi^{(m)}$ we make the following appointment
  \begin{align}\label{eq: frobeinius notations}
  \texttt{F}_m(\chi)=\begin{cases}
  \chi^{m} &\text{ if } \bk\ne \overline{\bbf}_p;\cr
  \chi^{(m)} &\text{ if } \bk=\overline{\bbf}_p.
  \end{cases}
  \end{align}

Consider $\textsf{u}_m:=u_0^{(1)}+ \texttt{F}_m(\chi)u_0^{(l)}$ for $m\in \bbn$. It follows from (\ref{Ad structure}) that all $(J, \textsf{u}_m$), $m\in\bbn$, lie in different $\uG$-orbits by Lemma \ref{lem A}. In fact, if $\textsf{u}_{m_1}$ and $\textsf{u}_{m_2}$ lie in the same $\uG$-orbit, then it follows from (\ref{Ad structure}) that there exists $g\in G_{J}$ such that
  \begin{align}\label{eq: pre claim m1m2}
  g.(u_0^{(1)}+ \texttt{F}_{m_1}(\chi)u_0^{(l)})\equiv u_0^{(1)}+ \texttt{F}_{m_2}(\chi)u_0^{(l)}\,\,\text{mod}\,(\jM).
  \end{align}
  {With aid of this formula, we claim
  \begin{align}\label{eq: m1 eqaul m2}
m_1=m_2.
  \end{align}
  Now we prove it by comparing the lowest weight vectors of summands in both sides of (\ref{eq: pre claim m1m2}). 
  Note that $G_J=\{\sum_{i=0}^{n-1}a_0J^i\mid a_0\in\bbk^\times, a_i\in \bbk\}$. Here and further $\bk^\times:=\bk\backslash\{0\}$.  We can write $g$ appearing in  (\ref{eq: pre claim m1m2}) as $g=a_0E+\sum_{i\geq 1}a_iJ^i$ for $a_0\in \bbk^\times$.  Thus, from (\ref{eq: pre claim m1m2}) with  such $g$ we deduce the following equation
 $$(a_0-1)u_0^{(1)}+(a_0F_{m_1}(\chi)-F_{m_2}(\chi))u_0^{(l)}=0$$
 in the module $\overline{\jM}$ over $G_J$ (see Lemma \ref{lem A} for the notation). Note that $u_0^{(1)}$ and $u_0^{(l)}$ are still linearly independent because both $u_0^{(1)}$ and $u_0^{(l)}$ are preimages of $v_0^{(1)}$ and $v_0^{(l)}$, respectively, latter of which are lowest weight vectors. So we have $a_0=1$ and $F_{m_1}(\chi)=F_{m_2}(\chi)$. This leads to the equality $m_1=m_2$. The claim (\ref{eq: m1 eqaul m2}) is proved. Consequently, we can conclude that all $(J,u_m)$ with $m\in \bbn$,  lie in mutually different orbits. It contradicts the preassumption of the lemma.
}

  The proof is completed.
  \end{proof}

\subsubsection{} By the above lemma, we focus on irreducible modules $M$ of $\GL_n$.  Up to tensors with one-dimensional modules, rational irreducible modules of $\GL_n$ can be presented as  polynomial irreducible ones which are parameterized by the set $P^+$ of partitions of positive integers, denoted by $L(\mu)$ with $\mu\in P^+$.
 Note that for any one-dimensional module $\bk_\gamma$ with $\gamma\in \bk^\times$, the $\GL_n$-conjugacy classes in $M$ and in $M\otimes_\bk \bk_\gamma$  are in a one-to-one correspondence.
Consequently, for simplicity of arguments we might as well suppose $M=L(\mu)$ is a polynomial irreducible representation without loss of generality. Those $\mu$ are  partitions of some positive integers, i.e. $\mu=(a_1\geq a_2\geq \cdots)$ with  $a_i\in \bbn$, $i=1,2,\ldots$.
  What is more, $\mu$ can be written as $\sum_{i=1}^na_i\epsilon_i$ where $\epsilon_i$ stands for an element of the character group $X(\textsf{T})$ of the canonical maximal torus $\textsf{T}$ of $\GL_n$ consisting of all invertible diagonal matrices, by defining
  via $\epsilon_i(\diag(t_1,\cdots,t_n))=t_i$. Denote $\alpha_i=\epsilon_i-\epsilon_{i+1}$ for $i=1,\ldots,n-1$. Then $\Delta:=\{\alpha_i\mid i=1,\ldots,n-1\}$ becomes the system of standard simple roots for $\GL_n$, and $\Phi^+:=\{\epsilon_i-\epsilon_j\mid 1\leq i<j\leq n\}$  the system of standard  positive roots. {There is a $\mathbb{R}$-bilinear form $(\cdot,\cdot)$ on $X(T)_\mathbb{R}=X(T)\otimes_\bbz\mathbb{R}$
  defined via $(\epsilon_i,\epsilon_j)=\delta_{ij}$.}

    Furthermore, $\textsf{d}(\epsilon_i)$ becomes a linear function on the space $\hhh:=\Lie(T)$ (consisting of all diagonal matrices of $\gl_n$), taking the value $c_i$ at $\diag(c_1,\cdots,c_n)\in\hhh$. All these $\textsf{d}(\epsilon_i)$ are really linear functions on $\hhh$. Sometimes, we do not distinguish  $\epsilon_i$ and $\textsf{d}(\epsilon_i)$ if the context is clear. Regard $\{\alpha_i\mid i=1,\ldots, n-1\}$ as the simple root system of $\ggg$. In the following, we will take $X_{\alpha_i}=E_{i,i+1}$ as a root vector of $\alpha_i$ for $\ggg$. Consider $J=J_n$ as before. Then $J=X_{\alpha_1}+X_{\alpha_2}+\cdots+X_{\alpha_{n-1}}$.

\begin{lemma}\label{lem C} Assume  $M=L(\mu)$ is a  polynomial irreducible representation of $G$ with $\mu=\sum_{i=1}^ra_i\epsilon_i$ with $a_i\in\bbn$ and $a_i\geq a_{i+1}$. If $\ucaln=\caln\times M$ has finite $\uG$-orbits, then all $a_i$ are equal,  i.e., $\mu=a_1(\epsilon_1+\cdots+\epsilon_r)$.
\end{lemma}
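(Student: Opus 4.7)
The plan is to argue by contrapositive: assume $\mu$ is not rectangular, so that $a_j>a_{j+1}$ for some $j$, and exhibit infinitely many $\uG$-orbits in $\ucaln$. By Lemma~\ref{lem A}(2), it suffices to produce one Jordan type $\lambda\in\scrp_n$ for which $G_{J_\lambda}$ has infinitely many orbits on $M/{}_{J_\lambda}M$. We would specialize to $\lambda=(n)$, so that $J=J_n$ is the regular nilpotent and $G_J=\Gm\ltimes\mathbf{R}$, where $\Gm=\{cE\mid c\in\bk^\times\}$ and $\mathbf{R}$ is the abelian unipotent subgroup generated by the $g^+(k,c)=E+cJ^k$ for $1\le k\le n-1$.

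The crucial observation is that \emph{$\mathbf{R}$ acts trivially on $M/\jM$}. For each generator $g=E+cJ^k$, the operator $\rho(g)\in\GL(M)$ is a polynomial expression in $\textsf{d}\rho(J^k)=(\textsf{d}\rho(J))^k$, and $(\textsf{d}\rho(J))^k v\in \textsf{d}\rho(J)(M)=\jM$ for every $v\in M$ and every $k\ge 1$; hence $\rho(g)v\equiv v\pmod{\jM}$.

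Consequently the $G_J$-action on $M/\jM$ factors through $G_J/\mathbf{R}\cong\Gm$. Since every $T$-weight of the irreducible module $L(\mu)$ has coordinate sum equal to $|\mu|:=\sum_i a_i$, the element $cE$ acts on all of $M$, and hence on $M/\jM$, by the single scalar $c^{|\mu|}$. Because $\bk$ is algebraically closed, $c\mapsto c^{|\mu|}$ is surjective onto $\bk^\times$, so the non-zero $G_J$-orbits on $M/\jM$ are precisely the punctured lines through the origin, and they are finite in number if and only if $\dim_\bk(M/\jM)=\dim\ker\textsf{d}\rho(J_n)\le 1$.

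What remains — and what we expect to be the main obstacle — is the combinatorial assertion that non-rectangularity of $\mu$ forces $\dim\ker\textsf{d}\rho(J_n)\ge 2$ on $L(\mu)$; equivalently, $L(\mu)$ must have at least two irreducible summands as a module for the principal $\sll$-triple $(J_n,h,f)\subset\gl_n$. We would establish this via the $h$-character: the specialization $\mathrm{ch}_h L(\mu)=s_\mu(q^{n-1},q^{n-3},\dots,q^{-(n-1)})$ coincides with a single $\sll$-character $q^a+q^{a-2}+\cdots+q^{-a}$ (with $a=\langle\mu,h\rangle$) only when $\dim L(\mu)=a+1$, which by the Weyl dimension formula is a stringent condition ruled out whenever $\mu$ has a strict drop $a_j>a_{j+1}$. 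A more hands-on alternative is to construct a second vector of $\ker\textsf{d}\rho(J_n)$ directly: under the hypothesis $a_j>a_{j+1}$, the weight $\mu-\alpha_j$ occurs with multiplicity one in $L(\mu)$, and one can combine a weight vector there with suitable vectors at the same principal $h$-level in such a way that the $v_\mu$-component of the image under $\textsf{d}\rho(J_n)$ cancels, yielding an element of $\ker\textsf{d}\rho(J_n)$ of $h$-weight strictly below $\langle\mu,h\rangle$.
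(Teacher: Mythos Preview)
Your reduction via Lemma~\ref{lem A}(2) to counting $G_{J_n}$-orbits on $M/\jM$ is sound, but the key step fails: the claim that $\mathbf{R}$ acts trivially on $M/\jM$ rests on the assertion $\textsf{d}\rho(J^k)=(\textsf{d}\rho(J))^k$, and this is false. The differential $\textsf{d}\rho:\gl_n\to\gl(M)$ is only a \emph{Lie algebra} homomorphism, not a homomorphism for the associative (matrix) product, so there is no reason for it to respect matrix powers. Concretely, take $G=\GL_3$ and $M=S^2(\bk^3)=L(2\epsilon_1)$. Here $J^2=E_{13}$, and one checks that $\textsf{d}\rho(J^2)(e_3^2)=2e_1e_3$, whereas $\jM=\im\textsf{d}\rho(J)=\langle e_1^2,\,e_1e_2,\,e_1e_3+e_2^2,\,e_2e_3\rangle$, which does not contain $e_1e_3$. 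Thus $\mathbf{R}$ acts nontrivially on the $2$-dimensional space $M/\jM$, and in fact $G_{J}$ has exactly three orbits there. So the inequality $\dim(M/\jM)\ge 2$ does \emph{not} by itself force infinitely many $G_J$-orbits, and your proposed dichotomy collapses. (This particular $\mu$ is rectangular, so it is not a counterexample to the lemma itself---but it shows that your mechanism cannot be what distinguishes rectangular from non-rectangular $\mu$.)

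The paper's argument avoids this issue entirely: rather than analyse the full $G_J$-action on $M/\jM$, it writes down explicit elements $\textsf{v}_m=(1+X_{\alpha_k}+\chi^m X_{\alpha_r})v_0^-$ built from two \emph{distinct} simple-root raisings of the lowest weight vector (which exist precisely when $\mu$ is non-rectangular), and separates them by comparing their components in the one-dimensional $T$-weight spaces $M_{\mu^\circ}$, $M_{\mu^\circ+\alpha_k}$, $M_{\mu^\circ+\alpha_r}$ modulo $\jM$. The point is that a weight-by-weight comparison bypasses any need to understand how the unipotent part of $G_J$ moves vectors between weight spaces. If you want to repair your approach, you would need an honest analysis of the $\mathbf{R}$-action on $M/\jM$---for instance, showing that for non-rectangular $\mu$ the quotient $(M/\jM)\big/\mathbf{R}$ still has dimension at least $2$---but that is substantially harder than the paper's direct construction.
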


\begin{proof} We only need to show that if there exists $a_k \in \{a_1,\ldots,a_r\}$ such that $a_k>a_{k+1}>0$, then there are infinite nilpotent $\uG$-orbits in $\ucaln$. In this case, we can take $1\leq k\leq r$ such that $a_1=a_2=\cdots=a_k> a_{k+1}\geq \cdots\geq a_r>0$. According to the assumption, there exists $k+1\leq s<r$ such that $a_s>a_{s+1}$, or $r<n$ and $a_{k+1}=a_{k+2}=\cdots =a_r$.  Let $v_0^-$ be a fixed minimal weight vector of $L(\mu)$. Then $J.v_0^-=\sum_{j=k}^rb_jX_{\alpha_j}v_0^{-}$ with  $b_j=1$ if $a_j>a_{j+1}$ or $0$ if $a_j=a_{j+1}$ for $j=k,\ldots, r$ ($a_{r+1}:=0$). Especially, $b_k=b_r=1$. Fix  $\chi$ or $\chi^{(m)}$ in $\bk$  as previously, and keep the convention (\ref{eq: frobeinius notations}) in mind. Set
$$\textsf{v}_m=(1+X_{\alpha_k}+\texttt{F}_m(\chi)X_{\alpha_r})v_0^-,$$
Then it follows from (\ref{Ad structure}) that those $\textsf{v}_m$ lie in mutually different $\uG$-orbits when $m$ are different. In fact, if $\textsf{v}_{m_1}$ and $\textsf{v}_{m_2}$ lie in the same $\uG$-orbit, then it follows from (\ref{Ad structure}) that there exists $g\in G_{J}$ such that either
$$g.(1+X_{\alpha_k}+\ttF_{m_1}(\chi) X_{\alpha_r})v_0^-\equiv (1+X_{\alpha_k}+\ttF_{m_2}(\chi) X_{\alpha_r})v_0^-\,\text{mod}\;(\jM)$$
  This yields that $m_1=m_2$ {by the same arguments as in the proof of Lemma \ref{lem B}.} 
\end{proof}

\subsubsection{}
By the above lemmas, we only need to focus on polynomial irreducible modules of the form $M=L(\mu)$ with $\mu=a\sum_{j=1}^r\epsilon_j\in \scrp_n$.
By definition, $M$ is decomposed into the sum of weight spaces: $M=\sum_{\gamma\in X(T)} M_\gamma$ with $M_\gamma=\{v\in M\mid \textbf{t}.v=\gamma(\textbf{t})v\;\forall \textbf{t}\in \textsf{T}\}$. Let $w_0$ be the longest element of the Weyl group $\frak{S}_n$, and fix a minimal weight vector $v_0^-$ of $M$. Then $v_0^-$ is of weight $\mu^{\circ}:=w_0(\mu)=a\sum_{j=n-r+1}^n\epsilon_j$, and $v_0^-$ linearly spans the one-dimensional lowest weight space $M_{\mu^\circ}$ (see \cite[II.2]{Jan1}).

\begin{lemma}\label{lem D} Keep the notations as above, and assume $n>2$. Take $M=L(\mu)$ with $\mu=a\sum_{j=1}^r\epsilon_j\in \scrp_n$, and set $J=J_n$.
\begin{itemize}
\item[(1)] Suppose there are two nonzero weight vectors $w_\eta\in M_{\eta}$ and $w_{\eta'}\in M_{\eta'}$ with different weights
     $\eta=\mu^{\circ}+\alpha_h+\alpha_k$ and $\eta'=\mu^{\circ}+\alpha_{h'}+\alpha_{k'}$ respectively (here $h,h',k,k'\in\{1,2,\cdots,n-1\}$),
     then $\ucaln$ contains infinite $\uG$-orbits.
\item[(2)] If $\ucaln=\caln\times M$ has finite $\uG$-orbits, then one of the following cases occurs: (i) $r=n$; \quad(ii) $r=n-1$ and  $a=1$; \quad(iii) $r=1$ and $a=1$.
\end{itemize}
\end{lemma}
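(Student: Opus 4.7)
The plan is to apply Lemma \ref{lem A}(2) with $\lambda=(n)$ and study the $G_J$-orbits on $M/\jM$ for the regular nilpotent $J=J_n$. From Section~\ref{sec: general fininteness}, $G_J = \Gm \ltimes \textbf{R}$, where $\Gm=\{cE:c\in\bk^\times\}$ acts on $M$ by the scalar $c^d$ with $d=|\mu|=ar$, and $\textbf{R}$ is the unipotent subgroup generated by $g^+(k,c)=E+cJ^k$ for $k=1,\ldots,n-1$. Since $\textsf{d}\rho(J^k)(M)\subseteq \jM$ for $k\geq 1$, a (truncated) exponential argument, justified by the complete-reducibility hypothesis, shows that $\rho(g^+(k,c))-\mathrm{id}$ maps $M$ into $\jM$; hence $\textbf{R}$ acts trivially on $M/\jM$. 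Consequently $G_J$ acts on $M/\jM$ via the surjective character $g\mapsto c_0^d$ onto $\bk^\times$, so the $G_J$-orbits on $M/\jM$ are precisely $\{0\}$ together with the punctured lines $\bk^\times\bar v$; these are finite in number if and only if $\dim(M/\jM)\leq 1$.

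To establish (1), it therefore suffices to show $\dim(M/\jM)=\dim\ker J\geq 2$ under the hypothesis. I will invoke the Jacobson--Morozov principal $\sll$-triple $(E=J,H,F)$ in $\gl_n$ with $H=\diag(n-1,n-3,\ldots,-(n-1))$. Restricting $M$ to this principal $\sll$ yields a decomposition $M=\bigoplus_i V_{m_i}$, with $\dim\ker J$ equal to the number of summands and with every $H$-weight space of each $V_m$ being one-dimensional. Each simple root has $H$-weight $2$, so both $\eta$ and $\eta'$ sit at the common $H$-weight $H(\mu^\circ)+4$; since $w_\eta$ and $w_{\eta'}$ lie in distinct $T$-weight spaces they are linearly independent in $M$. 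Hence the $H$-weight space $M_{H(\mu^\circ)+4}$ has dimension at least $2$, so $M$ cannot be $\sll$-irreducible and $\dim\ker J\geq 2$. Combined with the first paragraph and Lemma \ref{lem A}(2), this yields (1).

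For (2), I combine Lemmas \ref{lem B} and \ref{lem C} with (1). Finite orbits force $M\cong L(a\omega_r)$ (up to a one-dimensional twist) with $\omega_r=\epsilon_1+\cdots+\epsilon_r$ and some $1\leq r\leq n$, $a\geq 1$; part (1) then says $L(a\omega_r)$ admits at most one weight of the form $\mu^\circ+\alpha_h+\alpha_k$. A direct enumeration yields the trichotomy: if $r=n$ then $L(a\omega_n)\cong\det^{\otimes a}$ is one-dimensional, giving (i); if $r=1$ and $a=1$ then $M=\bk^n$ has the unique level-two weight $\epsilon_{n-2}$, giving (iii); if $r=n-1$ and $a=1$ then $M=\wedge^{n-1}\bk^n$ has a single level-two weight obtained by swapping the omitted index $n$ for $n-2$, giving (ii). In every remaining configuration one exhibits two distinct level-two weights, for example $\mu^\circ+2\alpha_{n-r}$ (available once $a\geq 2$) together with $\mu^\circ+\alpha_{n-r}+\alpha_{n-r\pm 1}$ (with the sign dictated by whether $r$ meets the boundary), contradicting (1).

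The chief technical obstacle is establishing $\dim\ker J\geq 2$ in the positive-characteristic setting: the Jacobson--Morozov argument is clean in characteristic zero, but in characteristic $p$ (under only the complete-reducibility assumption on $M$) the restriction of $M$ to the principal $\sll$ need not be semisimple. A back-up plan is to construct two linearly independent elements of $\ker J$ directly, using the fixed vector $v_\mu$ (annihilated by every simple-root vector) together with a specific linear combination of $w_\eta, w_{\eta'}$ and higher-level corrections, as illustrated by the element $e_2^2-2e_1e_3\in\ker J$ for $S^2\bk^3$. The case analysis in (2) is routine.
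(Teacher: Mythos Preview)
Your argument for part (1) breaks at the very first step: the claim that the unipotent part $\textbf{R}$ of $G_J$ acts trivially on $M/\jM$ is false. The assertion ``$\textsf{d}\rho(J^k)(M)\subseteq \jM$ for $k\geq 1$'' implicitly treats $\textsf{d}\rho$ as an associative-algebra homomorphism, but it is only a \emph{Lie} algebra homomorphism, so $\textsf{d}\rho(J^k)\neq(\textsf{d}\rho(J))^k$ once we leave the natural representation. Concretely, take $n=3$, $M=S^2\bk^3=L(2\epsilon_1)$, and $J=J_3$. Then $J^2=E_{13}$ and $\textsf{d}\rho(E_{13})(e_3^2)=2e_1e_3$, whereas $\jM=\text{span}(e_1^2,\,e_1e_2,\,e_1e_3+e_2^2,\,e_2e_3)$ does \emph{not} contain $e_1e_3$. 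Hence $\rho(g^+(2,c))(\bar e_3^2)=\bar e_3^2+2c\,\bar e_1e_3$ in $M/\jM$, and $\textbf{R}$ acts nontrivially. In this same example $\dim(M/\jM)=2$, yet the $G_J$-orbits on $M/\jM$ are finite (the image of $G_J$ in $\GL(M/\jM)$ is the two-dimensional group $\{c_0^2\begin{smallmatrix}1&0\\ t&1\end{smallmatrix}:c_0\neq0,\;t\in\bk\}$, giving exactly three orbits). So your equivalence ``finite $G_J$-orbits on $M/\jM$ $\Leftrightarrow$ $\dim(M/\jM)\leq 1$'' fails, and with it the whole reduction to the Jacobson--Morozov/$\dim\ker J$ argument. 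Part (2) inherits the problem, since it relies on part (1).

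By contrast, the paper does not attempt to control $\dim(M/\jM)$ at all. It constructs explicit one-parameter families $\textsf{w}_m=v_0^-+\chi^m w_\eta+\chi^{m+1}w_{\eta'}$ and argues that $(J,\textsf{w}_{m_1})$ and $(J,\textsf{w}_{m_2})$ cannot be $\uG$-conjugate for $m_1\neq m_2$, by comparing weight components on both sides of $g.\textsf{w}_{m_1}\equiv\textsf{w}_{m_2}\pmod{\jM}$ for $g\in G_J$. Part (2) is then a short case analysis producing the two required weight vectors in each excluded case. If you want to salvage your approach, you would need to analyse the \emph{actual} (nontrivial) $G_J$-action on $M/\jM$ rather than reduce to a scalar action; but at that point the direct test-vector method of the paper is more efficient.
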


\begin{proof} (1) Consider $J=J_n$ as before, and take $\chi$ or $\chi^{(m)}$ in $\bk$ as previously.
 Set
$$\textsf{w}_m=v_0^-+\ttF_{m}(\chi)w_\eta+\ttF_{m+1}(\chi)w_{\eta'}.$$
We claim that all $\textsf{w}_m$, $m\in\bbz$, lie in different $\uG$-orbits.  Actually, if $\textsf{w}_{m_1}$ and $\textsf{w}_{m_2}$ lie in the same $\uG$-orbit, there exists $g\in G_{J}$ such that
\begin{align*}\label{eq: about chi}
g.(v_0^-+\ttF_{m_1}(\chi)w_\eta+\ttF_{m_1+1}(\chi)w_{\eta'})
\equiv v_0^-+\ttF_{m_2}(\chi)w_\eta+\ttF_{m_2+1}(\chi)w_{\eta'}\,\,\text{mod}\,(\jM).
\end{align*}
{By the same arguments as in the proof of Lemma \ref{lem B},}
we have $m_1=m_2$.

(2) By the assumption $n>2$, $r$ may change in the range through $1$ to $n$  in the expression $\mu=a\sum_{j=1}^r\epsilon_j$. We divide our arguments into different cases.

(2.i) Suppose $1<r<n-1$. {Then we claim that  $M$ has two non-zero weight spaces $M_{w_0(\mu)+\alpha_{n-r}+\alpha_{n-r-1}}$ containing  nonzero $X_{\alpha_{n-r-1}}X_{\alpha_{n-r}} v_0^-$, and   $M_{w_0(\mu)+\alpha_{n-r}+\alpha_{n-r+1}}$ containing  nonzero vector  $X_{\alpha_{n-r+1}}X_{\alpha_{n-r}} v_0^-$.
Now we show the claim.  Recall that $M$ is an irreducible polynomial representation of $G$, it naturally becomes a $\ggg$-module with a generator $v_0^-$ over $U(\nnn^+)$ because $v_0^-$ is a minimal  weight vector of $M$. Note that $(\mu^\circ, \alpha_{n-r})=-a<0$. According to the weight string theory (see, for example \cite[\S21.3]{hum-0}), $\dim M_{\mu^\circ+\alpha_{n-r}}\ne 0$.  
Furthermore, $(\mu^\circ+\alpha_{n-r}, \alpha_{n-r-1})=-1$.
By the same reason as above,
$\dim M_{\mu^\circ+\alpha_{n-r}+\alpha_{n-r-1}}\ne 0$.
Hence the corresponding weight space contains the nonzero weight vector   $X_{\alpha_{n-r-1}}X_{\alpha_{n-r}} v_0^-$ because $M$ is $U(\nnn^+)$-generated by the minimal weight vector $v_0^-$.  Hence the claim conerning $X_{\alpha_{n-r-1}}X_{\alpha_{n-r}} v_0^-$ is proved. The claim  concerning $X_{\alpha_{n-r+1}}X_{\alpha_{n-r}} v_0^-$ can be also proved in the same way.

}
 So there are  different nonzero vectors $w_\eta=X_{\alpha_{n-r-1}}X_{\alpha_{n-r}} v_0^-$ and $w_{\eta'}=X_{\alpha_{n-r+1}}X_{\alpha_{n-r}} v_0^-$.
By the statement in part (1), the finiteness of orbits in $\ucaln$ yields  either $r=1$ or $r=n-1$.

(2.ii) Suppose $r=1$ but $a>1$ for $\GL_n$ with $n>2$.  { By the same arguments as in (2.i),} we can show that $M$ has two non-zero weight spaces $M_{w_0(\mu)+2\alpha_{n-1}}$ containing nonzero vector $X_{\alpha_{n-1}}^2 v_0^-$, and   $M_{w_0(\mu)+\alpha_{n-1}+\alpha_{n-2}}$ containing nonzero vector  $X_{\alpha_{n-2}}X_{\alpha_{n-1}} v_0^-$. So we have two different vectors $w_\eta=X_{\alpha_{n-1}}^2 v_0^-$ and $w_{\eta'}=X_{\alpha_{n-2}}X_{\alpha_{n-1}} v_0^-$.
By (1) we have that if $r=1$, then $a=1$ is necessary for the finiteness.

  (2.iii) Suppose $r=n-1$ but $a>1$. { By the same token as above,} $M$ has two non-zero weight spaces $M_{w_0(\mu)+2\alpha_{1}}$ containing a nonzero vector $X_{\alpha_{1}}^2 v_0^-$, and   $M_{w_0(\mu)+\alpha_{1}+\alpha_{2}}$ containing a nonzero vector  $X_{\alpha_{2}}X_{\alpha_{1}} v_0^-$.
By (1), we have that if $r=n-1$, then the finiteness yields $a=1$.

Summing up, we accomplish the proof.
\end{proof}

\begin{theorem}\label{thm: bigg n orbit} Assume $G=\GL_n$ with $n>2$.
Then the nilpotent cone $\ucaln:=\caln(\ugg)$ of the enhanced Lie algebra $\ugg:=\Lie(\uG)$  has finite nilpotent orbits under adjoint $\uG$-action if and only if up to tensor products with one-dimensional modules,  $M$ is isomorphic to the one of the following three modules: (i) one-dimensional module; (ii) the natural module $\bk^n$; (iii) the linear dual module of the natural one $\bk^n$.
\end{theorem}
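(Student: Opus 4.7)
The strategy is to combine the preceding three lemmas to obtain the necessity, and then to verify the sufficiency case by case via Lemma~\ref{lem A}(2).

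For the necessity direction, assume $\ucaln$ has finitely many $\uG$-orbits. Lemma~\ref{lem B} forces $M$ to be irreducible. Since every rational irreducible $\GL_n$-module is of the form $L(\mu)\otimes \bk_\chi$ for a polynomial irreducible $L(\mu)$ and a one-dimensional character $\chi$, after replacing $M$ by a one-dimensional twist we may assume $M=L(\mu)$. Lemma~\ref{lem C} then pins $\mu$ down to the shape $a(\epsilon_1+\cdots+\epsilon_r)$, and Lemma~\ref{lem D}(2) further confines $(r,a)$ to one of three possibilities: $r=n$ (any $a$), $r=n-1$ with $a=1$, or $r=1$ with $a=1$. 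The corresponding modules are $L(\mu)=\det{}^a$ (one-dimensional), $L(\mu)=\wedge^{n-1}\bk^n\cong(\bk^n)^*\otimes \det$, and $L(\mu)=\bk^n$. Undoing the one-dimensional twist, $M$ is, up to tensor with a one-dimensional module, one of the three listed modules (i), (iii), (ii) respectively.

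For the sufficiency direction, Lemma~\ref{lem A}(2) reduces the problem to showing, for every $\lambda\in\scrp_n$, that $G_{J_\lambda}$ has finitely many orbits on $\overline{\jM}=M/\jM$ where $J=J_\lambda$. In case (i), $M=\bk_\chi$ with $\chi=\det{}^a$ ($a\neq 0$): since $\mathrm{tr}(X)=0$ for any nilpotent $X$, one has $\textsf{d}\chi(X)=0$, hence $\jM=0$ and $\overline{\jM}=\bk$; the $G_{J_\lambda}$-action factors through $\chi$, and because $G_{J_\lambda}$ contains the central torus $\{cE:c\in\bk^\times\}$ with $\chi(cE)=c^{na}$, the image is all of $\bk^\times$, giving exactly two orbits $\{0\}$ and $\bk^\times$ for each $\lambda$. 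In case (ii), $M=\bk^n$, one exploits the block-diagonal description of $G_{J_\lambda}$ adapted to the Jordan blocks of $J_\lambda$ and studies the induced action on the cokernel $\bk^n/\mathrm{im}(J_\lambda)$, whose dimension is the number of parts of $\lambda$. A natural $G_{J_\lambda}$-invariant filtration of the cokernel, coming from the nested sequence of Jordan blocks, yields finitely many orbits in each graded piece and hence finitely many altogether. In case (iii), $M=(\bk^n)^*$, the automorphism $\sigma\colon\GL_n\to\GL_n$, $g\mapsto (g^T)^{-1}$, intertwines the natural and dual representations and induces an isomorphism between the two enhanced groups preserving nilpotent cones and orbit structure; finiteness transfers from case (ii).

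The main obstacle is case (ii): one must describe the $G_{J_\lambda}$-action on $\bk^n/\mathrm{im}(J_\lambda)$ concretely enough to conclude finiteness. The filtration sketch suffices for existence, but an explicit parameterization of the orbits and their closure relations requires substantially more work and is carried out in the subsequent sections, culminating in the bijection with the set $\scrpe$ of enhanced partitions in Theorem~\ref{prop: altn nilp cls}. Once case (ii) is established, cases (i) and (iii) follow immediately from the short arguments above, and the theorem is complete.
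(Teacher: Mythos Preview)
Your proof follows the same structure as the paper: necessity via Lemmas~\ref{lem B}--\ref{lem D}, and sufficiency case by case with case~(ii) deferred to the later classification. Your treatments of cases~(i) and~(iii) are in fact slightly more explicit than the paper's (the paper simply asserts the orbit count in case~(i) and says the dual is ``$G$-conjugate'' to the natural module in case~(iii), which is exactly your transpose-inverse automorphism). One caution: the filtration sketch you offer for case~(ii) is not a valid standalone argument---finitely many $G_{J_\lambda}$-orbits on each graded piece of an invariant filtration does \emph{not} imply finitely many orbits on the whole space (consider $\bk^\times$ acting by scalars on $\bk^2$, filtered by $0\subset\bk\subset\bk^2$). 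Since you then correctly defer to Theorem~\ref{prop: altn nilp cls}, just as the paper defers to Theorem~\ref{thm: classi}, the overall logic is intact; but you should drop or rephrase the filtration remark so it is not read as an independent proof.
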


\begin{proof}
The necessary part follows from Lemmas \ref{lem A}-\ref{lem D}. Next we show the sufficient part. When $M=L(a\sum_{i=1}^n\epsilon_i)$ ($a\in\bbn$), $M$ is a one-dimensional module. It is easily known that $\ucaln$ has $2\aleph_n$ $\uG$-orbits: one containing a single point ``zero vector" $(0, 0)$; one containing all elements $(0, \nu)$ with $\nu$ running over all nonzero elements in $M$, the remaining  ones are of the form $(J_{\lambda}, \nu)$ with $\lambda\in \mathscr{P}_n$ and $\nu$ running over all elements in $M$.
Now we consider $M=L(\epsilon_1)$. In this case, $M$ is the natural module $\bk^n$. The finiteness of orbits in $\ucaln$ follows from the forthcoming Theorem  \ref{thm: classi}. As to Case (iii) when $M$ is the linear dual of the natural module, { by Remark \ref{rem: identical orbti thy},
the enhanced nilpotent orbits under $\uG$ are in a one-to-one correspondence to the enhanced nilpotent orbits under $\underline{G}^\sharp$ for the natural module.}
So the finiteness in Case (iii)  is assured as in Case (ii). The sufficient part is proved.
\end{proof}

\begin{remark}
%
(1) It is meaningful to recall the study of finiteness of nilpotent orbits for a reductive Lie algebra $\ggg=\Lie(G)$ over $\bk$ (see \cite[Theorem 5.8.1]{jan2} and \cite[\S5.11]{Car}). The finiteness of nilpotent orbits of $\ggg$ can be directly deduced from the finiteness of unipotent conjugacy classes of the connected reductive algebraic group $G$ via a Springer bijection when $\text{ch}(\bk)=0$ or $\text{ch}(\bk)=p$ is good for $\ggg$.
The finiteness of unipotent conjugacy classes of a connected reductive algebraic group over an algebraically closed field was finally proved by Lusztig in 1976. This work was fulfilled by great efforts of  Dynkin-Kostant when $\text{ch}(\bk)=0$, Richardson and Lusztig {\sl{etc.}} (see \cite{Kos}, \cite{Ric}, \cite{Lu0}). This finiteness was conjectured by R. Steinberg in 1966.

(2) By Lemma \ref{lem: Fourier transform} (in particular, its second statement) and Remark \ref{rem: identical orbti thy}, there is a close connection between nilpotent orbits in the case (iii) of Theorem \ref{thm: bigg n orbit} and the ones in the case (ii). The present paper will only focus on the latter. As to the classification of nilpotent orbits in the former case, we will have a remark (see Remark \ref{rem: dual natural modules}).
\end{remark}

\subsection{Further discussion for $\GL_2$}\label{sec: 2.4} In the remaining part of this section, we assume $\text{ch}(\bk)=0$.
Suppose $(\rho, M)$ is a given polynomial irreducible  $\GL_2$-representation on $M$ of dimension $m+1$. Then $M$ can be described by a homogeneous polynomials of degree $m$ on $x, y$. This means $M$ can be identified with the space $\Mm$ of homogeneous polynomials on $x,y$ of degree $m$, i.e.
$$\Mm:=\{v=\sum_{j=0}^ma_{j}x^{m-j}y^{j}\mid a_{j}\in \bk\}$$
with the action of
\begin{equation}\label{matrix g}
g=\left( \begin{array}{cc}
 a &b   \cr
c& d\end{array}\right)\in \GL_2
\end{equation}
on $v$ via
$g.v=\sum_{j=0}^m a_{j}{g(x)}^{m-j}{g(y)}^{j}$ where $g(x)=ax+by$ and $g(y)=cx+dy$.
\subsubsection{} Denote by $\bk[\Mm]^{\GL_2}$ the invariants of the affine coordinate ring $\bk[\Mm]$ under $\GL_2$-action.  Consider the algebraic quotient map $\pi: \Mm\rightarrow \Mm/\hskip-4pt/ \GL_2$, i.e.  the morphism corresponding to the inclusion $\bk[\Mm]^{\GL_2}\hookrightarrow \bk[\Mm]$.
Then $\pi$  gives rise to a bijection between the closed orbits of $\Mm$ under $\GL_2$-action and the points of $\Mm/\hskip-4pt/\GL_2$. Let $\caln(\Mm)$ denote the nullcone
$\pi^{-1}(\pi(0))$  which is a closed subvariety of $\Mm$. Set for $i\in\bbz$
$$\Mm(i):=\{v\in \Mm\mid \diag(t,t^{-1})v=t^{-i}v, \forall t\in \bk^\times\}.$$
By definition, $\Mm=\bigoplus_{i=0}^m \Mm(m-2i)$ with
$$\Mm(m-2i)=\bk x^iy^{m-i}.$$

\begin{lemma}\label{lem: gl2 homo dim} Keep the notations and assumptions as above. The following statements hold.
\begin{itemize}
\item[(1)]  $v\in \caln(\Mm)$ if and only if $v$ is $\rho(\GL_2)$-conjugate to a vector from $\bigoplus_{i>0} \Mm(i)$.

\item[(2)] Suppose $m\geq 3$. Then
\begin{align}\label{eq: null dim}
\dim\caln(\Mm)\leq\begin{cases} d+2  &\text{ for }m=2d+1;\cr
d+1 &\text{ for }m=2d.
\end{cases}
\end{align}
\item[(3)] Suppose $m \geq 3$. Then $\dim \Mm\aq\GL_2\geq d$ with $m=2d$ or $m=2d+1$.
\end{itemize}
\end{lemma}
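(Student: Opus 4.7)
For part (1), I will apply the Hilbert--Mumford criterion. By definition $v \in \caln(\Mm)$ iff $0 \in \overline{\GL_2 \cdot v}$, which by Hilbert--Mumford is equivalent to the existence of a one-parameter subgroup $\lambda: \Gm \to \GL_2$ with $\lim_{t\to 0} \lambda(t) \cdot v = 0$. Every one-parameter subgroup of $\GL_2$ is conjugate into the diagonal torus, so after replacing $v$ by $gv$ for a suitable $g \in \GL_2$ we may assume $\lambda$ is diagonal; taking $\lambda(t) = \diag(t^{-1},t)$, the monomial $x^j y^{m-j}$ is multiplied by $t^{m-2j}$, whence the limit condition forces $a_j = 0$ for all $j \geq m/2$, equivalently $gv \in \bigoplus_{i > 0} \Mm(i)$. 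The converse is immediate since this same $\lambda$ kills every vector of $\bigoplus_{i>0}\Mm(i)$.

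For part (2), set $X := \bigoplus_{i > 0}\Mm(i) = \bigoplus_{0 \leq j < m/2} \bk\, x^j y^{m-j}$; by part (1), $\caln(\Mm) = \GL_2 \cdot X$. The crucial observation is that the opposite Borel $B^- \subset \GL_2$ of lower-triangular matrices preserves $X$: its unipotent radical sends $x^j y^{m-j}$ to
\[
(x+sy)^j y^{m-j} \;=\; \sum_{k=0}^{j}\binom{j}{k} s^k x^{j-k} y^{m-j+k},
\]
each summand indexed by $l := j-k \leq j < m/2$ and thus lying in $X$, while the diagonal torus preserves weight spaces. Hence the orbit morphism $\GL_2 \times X \to \Mm$, $(g,v) \mapsto gv$, descends through the associated bundle $\GL_2 \times^{B^-} X$ over $\GL_2/B^- \cong \bbp^1$; the total space has dimension $1 + \dim X$ and surjects onto $\caln(\Mm)$. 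A direct count of $\{j \in \bbz : 0 \leq j < m/2\}$ gives $\dim X = d$ when $m=2d$ and $\dim X = d+1$ when $m=2d+1$, yielding the stated bounds on $\dim \caln(\Mm)$.

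Part (3) then follows from part (2) by the fiber dimension theorem. Applied to the quotient morphism $\pi: \Mm \to \Mm \aq \GL_2$, one has $\dim \Mm = \dim \Mm \aq \GL_2 + (\text{generic fiber dimension})$; by upper semi-continuity of fiber dimension the generic fiber dimension is bounded above by $\dim \pi^{-1}(\pi(0)) = \dim \caln(\Mm)$, so $\dim \Mm \aq \GL_2 \geq (m+1) - \dim \caln(\Mm)$. Plugging in the bounds from part (2) yields $\geq (2d+1)-(d+1) = d$ when $m=2d$ and $\geq (2d+2)-(d+2) = d$ when $m=2d+1$.

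The main subtlety lies in part (2): one must carefully check that it is the opposite Borel $B^-$ (rather than $B^+$) that stabilizes the ``positive weight'' subspace $X$ selected in part (1), the asymmetry being dictated by the specific cocharacter $\diag(t^{-1},t)$; once this is in hand, the rest of the argument is clean dimension counting plus the standard upper semi-continuity of fiber dimensions.
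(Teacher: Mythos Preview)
Your approach is essentially the same as the paper's, but there is a genuine labeling error in part (2) that you yourself flag as ``the main subtlety'' and then get backwards. You assert that the \emph{lower}-triangular Borel $B^-$ stabilizes $X=\Mmp$, yet your own computation
\[
x^j y^{m-j}\;\longmapsto\;(x+sy)^j y^{m-j}
\]
is the action of the \emph{upper}-triangular unipotent: for $g=\begin{pmatrix}1&s\\0&1\end{pmatrix}$ the paper's formulas give $g(x)=x+sy$, $g(y)=y$, whereas the lower-triangular unipotent $\begin{pmatrix}1&0\\s&1\end{pmatrix}$ sends $x^jy^{m-j}$ to $x^j(sx+y)^{m-j}$, which produces monomials $x^{j'}y^{m-j'}$ with $j'\geq j$ and hence does \emph{not} stay in $\Mmp$. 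So it is $B^+$, not $B^-$, that stabilizes $\Mmp$; this is precisely what the paper uses. Once the label is corrected your associated bundle $G\times^{B^+}\Mmp$ is exactly the paper's variety $\widetilde{\caln(\Mm)}\subset G/B^+\times\Mm$, and the dimension count is identical.

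Two further remarks. In part (1) your Hilbert--Mumford sketch jumps from ``$\lambda$ is diagonal'' directly to the specific cocharacter $\diag(t^{-1},t)$; a general diagonal one-parameter subgroup is $\diag(t^a,t^b)$, and one still needs a Weyl-group conjugation (and to deal with the central direction) before concluding that $gv\in\Mmp$. In part (3) your route is actually cleaner than the paper's: you use only upper semi-continuity of fiber dimension to get $\dim\Mm\aq G\geq(m+1)-\dim\caln(\Mm)$, whereas the paper invokes the Hochster--Roberts theorem on Cohen--Macaulayness of invariant rings to obtain the stronger equality $\dim\caln(\Mm)=m+1-l$, which is more than is needed here.
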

\begin{proof} (1) It can be done  by the same arguments as in the proof of \cite[Lemma 6.3]{GK}.

(2) Set $M^{(m)}_+:=\sum_{i>0}\Mm(i)$, and $M^{(m)}_-:=\sum_{i<0}\Mm(i)$. Clearly, $\dim M^{(m)}_\pm=d$ if $m=2d$, or $d+1$ if $m=2d+1$.
 By (1),  we know $\caln(\Mm)\supset M^{(m)}_+$. Hence $\dim\caln(\Mm)\geq d$.  On the other hand,
 $\rho(\textsf{T})M^{(m)}_-=M^{(m)}_+$ where
 \begin{equation*}
\textsf{T}=\left( \begin{array}{cc}
 0 &1   \cr
1& 0\end{array}\right)\in \GL_2.
\end{equation*}
So $\caln(\Mm)\supset M^{(m)}_-$.

In the following arguments, simply denote $G=\GL_2$ and denote by $B^+$ the standard Borel subgroup consisting of upper triangular matrices of $\GL_2$. Then $\Mmp$ is stabilized by $\rho(B^+)$.
 We can consider $$\widetilde{\caln(\Mm)}:=\{(gB^+, v)\in G\slash B^+\times \caln(\Mm)\mid \rho(g)(v)\in \Mmp\}.$$
Now we take several steps for the remaining arguments by exploiting some classical geometric strategies (see \cite[\S0.15, \S6.20-21]{hum} and \cite[\S6.4]{jan2}).

(i) We claim that $\caln(\Mm)$ is closed in $G/B^+\times \caln(\Mm)$. Actually, we consider the morphism $\phi: G\times \Mm\rightarrow G/B^+ \times \Mm$ sending $(g,v)$ onto $(gB^+,v)$. Then $\phi$ is an open morphism. This is because $\phi$ can be regarded a quotient map
$$G\times \Mm\rightarrow  (G\times \Mm) \slash (B^+\times 0)$$
where $\Mm$ is regarded an algebraic group under addition.
We next have the inverse image $$\phi^{-1}(\widetilde{\caln(\Mm)})=\{(g,v)\in G\times\Mm\mid \rho(g)(v)\in \Mmp\}.$$
On the other hand, the representation $\rho$ gives rise to the natural morphism $G\times \Mm\rightarrow \Mm$ sending $(g, v)$ onto $\rho(g)v$, under which the inverse image of $\Mmp$ exactly coincides with $\phi^{-1}(\widetilde{\caln(\Mm)})$. Hence, $\phi^{-1}(\widetilde{\caln(\Mm)})$ is closed in $G\times \Mm$. Recall that $\phi$ is already known to be open. So the claim is proved.

(ii) We claim that $\widetilde{\caln(\Mm)}$ is irreducible. Actually, consider the morphism $\psi: G\times \Mmp\rightarrow \widetilde{\caln(\Mm)}$ with sending $(g,v)$ onto $(gB^+,\rho(g^{-1})v)$. By definition, $\psi$ is surjective. Consequently, $\widetilde{\caln(\Mm)}$ is irreducible.

(iii) Consider the projection maps $\pi_1:\widetilde{\caln(\Mm)}\rightarrow G\slash B^+$ , and $\pi_2: \widetilde{\caln(\Mm)}\rightarrow \caln(\Mm)$ which are projections onto the first-coordinate, and the second-coordinate,  respectively. Then both of them are $G$-equivariant under the action of $G$ on $\widetilde{\caln(\Mm)}$ via   $h.(gB^+,v):=(hgB^+,\rho(h)v)$. By (1), the morphism
$$ G\times \Mmp\rightarrow \caln(\Mm), \quad (g,v)\mapsto \rho(g)v$$
is surjective. It follows that $\pi_2$ is surjective. On the other hand, for all $g\in G$,
$$\pi_1^{-1}(gB^+)=\{(gB^+,v)\in G\slash B^+\times \Mm\mid \rho(g)(v)\in \caln(\Mmp)\}$$
which is isomorphic to $\rho(g^{-1})\Mmp$. Consequently, $\pi_1^{-1}(gB^+)$ is an irreducible variety of dimension equal to $\dim\Mmp$.

(iv) Focus on $\pi_1$ whose fibers at every points have the same dimension equal to $\dim\Mmp$. Note that both $G\slash B^+$ and $\widetilde{\caln(\Mm)}$ are irreducible. So we have $\dim\widetilde{\caln(\Mm)}=\dim G\slash B^+ +\dim\Mmp=1+\dim\Mmp$ (see \cite[AG, Theorem 10.1]{Bo}).

(v) We finally have
\begin{align*}
\dim \caln(\Mm)\leq \dim \widetilde{\caln(\Mm)}=1+\dim\Mmp=\begin{cases} d+2, &\text{ if }m=2d+1;\cr
d+1, &\text{ if }m=2d,
\end{cases}
\end{align*}
consequently verifying  \eqref{eq: null dim}.

(3) Set $l=\dim \Mm\aq G$. Then the difference $\dim \Mm-\dim\Mm\aq G$ is equal to $(m+1)-l$. Thanks to a theorem owing to  Hochster-Roberts, $\bk[\Mm]^G$ is a Cohen-Macaulay algebra  (see \cite{HR}). Consequently $\Mm\aq G$ is irreducible.  Thus, $\dim\caln(\Mm)=m+1-l$ (see \cite[Theorem 5.1.6]{Sp0} or \cite[AG, Theorem 10.1]{Bo} again). By (2), we have $m+1-l\leq d+1$ for $m=2d$ and  $m+1-l\leq d+2$ for $m=2d+1$. Hence $l\geq d$.

The proof is completed.
\end{proof}

By the above lemma, it is  deduced that $\Mm$ has infinite closed $\GL_2$-orbits whenever $m\geq 3$. Hence, as a direct consequence we have

\begin{corollary} \label{cor: inf}
For $m \geq 3$, there are infinite $\GL_2$-orbits in $\Mm$.
\end{corollary}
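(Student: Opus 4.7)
The plan is to read off the corollary from part (3) of Lemma~\ref{lem: gl2 homo dim} together with the standard fact that the algebraic quotient $\pi\colon M^{(m)}\to M^{(m)}\aq\GL_2$ induces a bijection between its points and the set of closed $\GL_2$-orbits in $M^{(m)}$.

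First I would observe that when $m\geq 3$, writing $m=2d$ or $m=2d+1$, we have $d\geq 1$, so Lemma~\ref{lem: gl2 homo dim}(3) gives
\[
\dim\bigl(M^{(m)}\aq \GL_2\bigr)\geq d\geq 1.
\]
Since $M^{(m)}\aq \GL_2$ is an affine variety over the algebraically closed field $\bk$ of positive dimension, it contains infinitely many closed points.

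Next I would recall the standard geometric invariant theory fact used throughout Section~2.4: for a reductive group acting on an affine variety, the fibers of $\pi$ separate closed orbits, so the closed $\GL_2$-orbits in $M^{(m)}$ are in bijection with the closed points of $M^{(m)}\aq \GL_2$. Combined with the preceding paragraph, this produces infinitely many closed $\GL_2$-orbits in $M^{(m)}$, and \emph{a fortiori} infinitely many $\GL_2$-orbits, which is the desired conclusion.

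There is essentially no obstacle: the entire content is packed into Lemma~\ref{lem: gl2 homo dim}(3), whose proof already invoked Hochster–Roberts together with the dimension bound on $\caln(M^{(m)})$ from part (2). The only thing one might want to make explicit is that $d\geq 1$ when $m\geq 3$, which is immediate from the cases $m=3\Rightarrow d=1$ and $m\geq 4\Rightarrow d\geq 2$.
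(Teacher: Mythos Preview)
Your proposal is correct and follows essentially the same route as the paper: invoke Lemma~\ref{lem: gl2 homo dim}(3) to get $\dim(M^{(m)}\aq\GL_2)\geq d\geq 1$ for $m\geq 3$, then use the GIT bijection between closed $\GL_2$-orbits and points of the quotient to conclude there are infinitely many closed orbits, hence infinitely many orbits. The paper's proof is nothing more than the sentence ``By the above lemma, it is deduced that $\Mm$ has infinite closed $\GL_2$-orbits whenever $m\geq 3$,'' and you have spelled out exactly that reasoning.
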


\begin{remark} { Lemma \ref{lem: gl2 homo dim} is expected to be valid for  $\GL_n$ with $n$ bigger than  $2$. }

\end{remark}
\subsubsection{}
Recall that for $G=\GL_2$, any polynomial irreducible representation $\rho$ on $M$  is described simply  via a basis,   $\{\textsf{d}\rho(J_2)^{i}v_0^-\mid i=0,1,\ldots, m\}$ where  $v_0^-$ has the same meaning as in \S\ref{sec: general fininteness} and
\begin{equation*}
J_2=\left( \begin{array}{cc}
 0 &1   \cr
0& 0\end{array}\right).
\end{equation*}
 Up to isomorphisms, $M$ is uniquely determined by the dimension, which is denoted by $L(m\epsilon_1)$.

Now we are in a position to state the results for $\GL_2$.

\begin{theorem}\label{thm: gl2 enh class}
 Let $G=\GL(2)$ which is over $\bk$ with $\text{ch}(\bk)=0$, and $(\rho, M)=L(m\epsilon_1)$ ($m\in\bbz_{>0}$) be an irreducible representation on $M$. 
Then the following statements hold.
\begin{itemize}
\item[(1)] If $m=1$, then there are $4$ enhanced nilpotent orbits.

\item[(2)] If $m=2$, then there are $5$ enhanced nilpotent orbits.

\item[(3)] If $m\geq 3$, then there are infinite enhanced nilpotent orbits.

\end{itemize}
\end{theorem}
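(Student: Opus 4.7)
The plan is to apply Lemma \ref{lem A}(2): counting $\uG$-orbits on $\ucaln$ amounts to summing, over $\lambda\in\scrp_2=\{(1,1),(2)\}$, the number of $G_{J_\lambda}$-orbits on the quotient $M/\jM$ with $\jM=\mathsf{d}\rho(J_\lambda)M$. Since $J_{(1,1)}=0$, the $\lambda=(1,1)$ contribution is simply the number of $\GL_2$-orbits on all of $M$; since $J_{(2)}=J_2=E_{12}$, the $\lambda=(2)$ contribution is the number of orbits of the Jordan centralizer $G_{J_2}=\{aE+bE_{12}\mid a\in\bk^\times,\,b\in\bk\}$ on $M/\jM$.

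For part (3), Corollary \ref{cor: inf} already supplies infinitely many $\GL_2$-orbits on $M=\Mm$ whenever $m\geq 3$, hence infinitely many $\uG$-orbits of the form $\uG\cdot(0,w)$; no analysis of the $J_2$-sector is required.

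For part (1), $M=L(\epsilon_1)=\bk^2$ is the natural module with basis $x,y$. For $\lambda=(1,1)$, $\GL_2$ has two orbits on $\bk^2$: $\{0\}$ and $\bk^2\setminus\{0\}$. For $\lambda=(2)$, in the polynomial description of \S\ref{sec: 2.4} the operator $\mathsf{d}\rho(E_{12})$ acts as the derivation $x\partial_y$, giving $\jM=\bk x$ and $M/\jM=\bk\overline{y}$; then $(aE+bE_{12})\cdot\overline{y}=a\overline{y}$ with $a\in\bk^\times$, so again two orbits. Total: $2+2=4$.

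For part (2), $M=L(2\epsilon_1)$ has basis $x^2,xy,y^2$. In the $\lambda=(1,1)$ sector one classifies $\GL_2$-orbits on binary quadratic forms: every nonzero degenerate form is a scalar multiple of $\ell^2$ for some $\ell\in\bk^2\setminus\{0\}$, and the transitivity of $\GL_2$ on nonzero vectors together with absorption of scalars via the center places all of these in a single orbit represented by $y^2$; every nondegenerate form diagonalizes and, after a further linear change of coordinates, is conjugate to $xy$. This gives three orbits. In the $\lambda=(2)$ sector, $\mathsf{d}\rho(E_{12})=x\partial_y$ yields $\jM=\mathrm{span}\{x^2,xy\}$ and $M/\jM=\bk\,\overline{y^2}$, while $(aE+bE_{12})\cdot y^2=(ay+bx)^2\equiv a^2\,\overline{y^2}\pmod{\jM}$, producing two orbits. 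Total: $3+2=5$. The only substantive ingredient is Corollary \ref{cor: inf} for part (3); parts (1) and (2) reduce to short linear-algebra verifications, and the main bookkeeping step is the rank/discriminant classification of binary quadratic forms in part (2).
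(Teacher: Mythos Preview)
Your proof is correct and follows the same approach as the paper: both use Corollary~\ref{cor: inf} on the $\{0\}\times M$ sector for part (3), and the orbit-counting bijection of Lemma~\ref{lem A}(2) for parts (1) and (2). Your version is in fact slightly more self-contained, carrying out directly (via the rank classification of binary quadratic forms for $\lambda=(1,1)$ and the one-dimensional quotient computation for $\lambda=(2)$) what the paper defers to the forthcoming Theorem~\ref{thm: classi} and Lemma~\ref{lem: addition m=2}.
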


\begin{proof} (1) It is clear because $M$ is a natural module. The classification of  orbits can be seen in a general result Theorem \ref{thm: classi} which is forthcoming.

(2) When $m=2$, $\ucaln$ has five $\uG$-orbits: (i) A single point ``zero vector"; (ii) $\textsf{Ad}(\uG)(0, v_0^-)$; (iii) $\textsf{Ad}(\uG) (0, v_0^-+\textsf{d}\rho(J_2)^{2}v_0^-)$; (iv) $\textsf{Ad}(\uG)(J_2, 0)$; (v) $\textsf{Ad}(\uG)(J_2, v_0^-)$. The detailed arguments will be left in the next section (see Lemma \ref{lem: addition m=2}),  where we deal with the closures of nilpotent orbits in this exceptional case.

(3) Suppose $m\geq 3$. Note that $0\times M$ is a $\uG$-stable subvariety of $\ucaln$. For $v_1,v_2\in M$, two vectors $(0,v_1), (0,v_2)\in \ucaln$ lie in different $\uG$-orbits whenever $v_1$ and $v_2$ lie in different $G$-orbits of $M$. Consequently, the desired statements follows from  Corollary \ref{cor: inf}.
\end{proof}



\begin{remark} The further details when the situations of finite orbits happen will be seen in the next sections. Also more information on the closures of nilpotent orbits will be presented there.
\end{remark}

\subsection{} As a consequence of the above finiteness theorems, we have the following criteria of the finiteness of $\GL_n$-orbits in $\ucaln$.

\begin{corollary} 
 Keep the notations as in Theorem \ref{thm: bigg n orbit} and \ref{thm: gl2 enh class}. Then the following statements hold.
\begin{itemize}
\item[(1)] Suppose $n>2$. As a $\GL_n$-variety,  $\ucaln$ has finite orbits if and only if $M$ is isomorphic to one of the following three modules: (i) a one-dimensional module; (ii) The natural module $\bk^n$; (iii) The linear dual module of $\bk^n$.

\item[(2)] When $n=2$ and $\text{ch}(\bk)=0$, as a $\GL_2$-variety, $\ucaln$ has finite orbits if and only if $M$ is an irreducible module of dimension not bigger than $2$.
\end{itemize}
\end{corollary}

\begin{proof}
(1) From an obvious fact that finiteness of $\GL_n$-orbits in $\ucaln$  yields the same property of $\uG$-orbits,  the necessity is a direct consequence of Theorems \ref{thm: bigg n orbit}. Then we verify the sufficiency. As to the case with one-dimensional modules, the sufficiency is clear. Note that the natural module and its dual are $\GL_n$-equivariant.  It suffices to consider the case with the natural modules, which has been verified in \cite{AH}.

For (2), the critical difference from the situation with $\ucaln$ considered as a $\uG$-variety lies in the case when $M$ is an irreducible $3$-dimensional rational module of $\GL_n$. In this case, we claim that there are infinite $\GL_2$-orbits in $\ucaln$.  In order to see this, we consider the $\GL_2$-variety  $\mathcal{X}:=\textsf{Ad}(\uG)(J_2, v_0^-)$. Considering  the centralizer $G_{J_2}$ of $J_2$ in $\GL_2$, we can identify  $G_{J_2}$ with $\bk^\times I_2\times(I_2+\bk J_2)$ for the identity matrix $I_2\in\GL_2$.
 Note that $\textsf{Ad}(G_{J_2}\times M)(J_2,v_0^-)$ is contained in $\{J_2\}\times (\{\bk^\times v_0+\bk v_1+\bk v_2\})$ where $v_i:=J_2^iv_0^{-}$, $i=0,1,2$, forming a basis of $M$. Take $w_c=v_0+cv_2$ for $c\in \bk$. We can show that if  $(J_2,w_c)\in\mathcal{X}$, then these $(J_2,w_c)$ lie in different $\GL_2$-orbits when $c$ takes different values.  Actually,  $(J_2, w_c)$ and $(J_2, w_{c'})$ lie in the same $\GL_2$-orbit if and only if there is a pair $(a,b)\in \bk^\times\times \bk$ such that $av_0+abv_1+acv_2=v_0+c'v_2$, yielding $c=c'$. However $\mathcal{X}$ contains infinite $(J_2,w_{c})$ with $c\in\bk$. Hence $\mathcal{X}$ has infinite $\GL_n$-orbits.  From this observation, the claim follows.

  What remains to do can be carried out by the same arguments  as in the proof for (1). The proof is completed.
\end{proof}

\section{An exceptional case arising from $\GL_2$ along with its $3$-dimensional representations}\label{sec: exceptional case}

 By Theorem \ref{thm: gl2 enh class}, there are $5$ enhanced nilpotent
 orbits for the enhanced reductive
 group  associated with  $\GL_2$ and its $3$-dimensional irreducible representation $(\rho, L)$ on the space $L:=L(2\epsilon_1)$.
 In this section, we describe their closures
 and the topology relations. We will assume that $\text{ch}(\bk)=0$ in this section.

\subsection{}
 Set $\uG=G\ltimes_\rho L$ for $G=\GL_2$. We have  $\ggg:=\Lie(G)$ equal to $\gl_2$ and  $\Lie(\uG)=\underline{\ggg}:=\ggg\ltimes_{\textsf{d}\rho} L\cong \ggg\oplus L$. Recall that in the sketchy proof of Theorem \ref{thm: gl2 enh class}, there are five $\uG$-orbits in $\ucaln=\caln\times L$. We will give some details.
 \begin{lemma}\label{lem: addition m=2} There are five $\uG$-orbits in $\ucaln$ listed as below.
  (1)   A single point ``zero vector" $(0,0)$, denoted by $\co_1$;
 (2)   $\co_2:=\textsf{Ad}(\uG) (0, v_0^-)$;
 (3) $\co_3:=\textsf{Ad}(\uG) (0, v_0^-+\textsf{d}\rho(J_2)^{2}v_0^-)$;  (4) $\co_4:=\textsf{Ad}(\uG)(J_2, 0)$;  and (5) $\co_5:=\textsf{Ad}(\uG)(J_2, v_0^-)$.
 \end{lemma}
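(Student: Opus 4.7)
The approach is to invoke Lemma~\ref{lem A}(2), whose bijection $\Phi$ reduces the classification of $\uG$-orbits in $\ucaln$ to choosing, for each $\lambda \in \scrp_2$, a set of representatives $\Upsilon_\lambda$ for the $G_{J_\lambda}$-orbits on $\overline{\jM} = L/\jM$, where $J = J_\lambda$. Since $\scrp_2 = \{(1,1),(2)\}$, only two cases arise, and the total orbit count will be $|\Upsilon_{(1,1)}| + |\Upsilon_{(2)}|$.

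\emph{Case $\lambda = (1,1)$.} Here $J_\lambda = 0$, $G_{J_\lambda} = \GL_2$, and $\jM = 0$, so I must enumerate all $\GL_2$-orbits on $L$. Identifying $L$ with the space $M^{(2)}$ of binary quadratic forms in $x,y$ as in \S\ref{sec: 2.4}, the rank of a quadratic form is a complete $\GL_2$-invariant over $\bk$ (algebraically closed, characteristic zero), taking values in $\{0,1,2\}$. This yields exactly three orbits, with natural representatives $0$, $v_0^-$ (a square of a linear form, rank one) and $v_0^- + \textsf{d}\rho(J_2)^2 v_0^-$ (nondegenerate, rank two). These produce $\co_1, \co_2, \co_3$.

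\emph{Case $\lambda = (2)$.} Here $J_\lambda = J_2 = E_{12}$, and the plan proceeds in three steps. First, a direct matrix computation gives $G_{J_2} = \bk^\times I_2 \cdot (I_2 + \bk J_2)$. Second, using the weight basis $\{v_i := \textsf{d}\rho(J_2)^i v_0^- \mid i = 0,1,2\}$ of $L$, one reads off $\jM = \bk v_1 + \bk v_2$, so that $L/\jM = \bk \bar{v}_0$ is one-dimensional. Third, I determine the $G_{J_2}$-action on $L/\jM$: the unipotent factor $I_2 + bJ_2 = \exp(bJ_2)$ acts on $L$ by $\exp(b\,\textsf{d}\rho(J_2))$ and hence trivially on the quotient $L/\jM$, while the central torus $\bk^\times I_2$ acts on $v_0^-$ via the character $2\epsilon_2$, i.e.\ by $a \mapsto a^2$. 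Since the squaring map $\bk^\times \to \bk^\times$ is surjective, there are exactly two $G_{J_2}$-orbits on $L/\jM$, namely $\{0\}$ and $\bk^\times \bar{v}_0$, contributing $\co_4$ and $\co_5$.

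Assembling the two cases yields $3 + 2 = 5$ orbits, exactly as asserted. The main delicate step is the third one above: verifying that the unipotent radical of $G_{J_2}$ descends to the identity on the one-dimensional quotient $L/\jM$. This reflects the weight-raising nature of $\textsf{d}\rho(J_2)$, and is the only place where one uses specific features of the $3$-dimensional representation rather than general principles.
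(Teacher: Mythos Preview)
Your proof is correct and follows essentially the same route as the paper: both split into the two Jordan types $\lambda=(1,1)$ and $\lambda=(2)$, use the rank classification of binary quadratic forms for the first case, and analyse the $G_{J_2}$-action modulo $\jM$ for the second. The only cosmetic difference is that you invoke Lemma~\ref{lem A}(2) and work systematically in the quotient $L/\jM$, whereas the paper uses Lemma~\ref{lem A}(1) and argues directly that $\Ad(g,u)(J_2,v_0)$ can never land in $\{J_2\}\times\{0\}$ by tracking the weight~$-2$ component---but this weight component is exactly what your quotient $L/\jM=\bk\bar v_0$ detects, so the two arguments are the same in substance.
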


 For the simplicity of notations, we will denote $v_i:=\textsf{d}\rho(J_2)^iv_0^-$, $i=0,1,2$. Then $\{v_i\mid i=0,1,2\}$ is a basis of $L$. Those $5$ orbits can be expressed as:
$$\co_1=\co_{(0,0)};\co_2=\co_{(0,v_0)};
\co_3=\co_{(0,v_0+v_2)};\co_4=\co_{(J_2,0)};\co_5=\co_{(J_2,v_0)}.$$

 \begin{proof} Recall that $L$ can be identified with $M^{(2)}$ in \S\ref{sec: 2.4}, which is the space of homogeneous polynomials on $x,y$ of degree $3$ with $v_0=x^2$, $v_1=xy$ and $v_2=y^2$. Then the $\GL_2$-orbits in $M^{(2)}$  are exactly in a one-to-one correspondence with all standard quadratic forms on $x,y$. On the other hand,
 $$\ucaln=(\{0\}\times L) \cup ((\caln\backslash\{0\})\times L),$$
 both of whose components
  are $\uG$-stable with trivial intersection.
 So $$\{0\}\times L=\co_{(0,0)}\cup \co_{(0,v_0)}\cup \co_{(0,v_0+v_2)}$$ is the decomposition into different $\uG$-orbits.

As to the part $(\caln\backslash\{0\})\times L$, by Lemma \ref{lem A}(1) we have that
\begin{align}\label{eq: excep dec}
(\caln\backslash\{0\})\times L=\co_{(J_2,0)}\cup \co_{(J_2,v_0)}.
 \end{align}
 Note that for any $(g,u)\in G_{J_2}\ltimes L$, $\Ad(g,u).(J_2,v_0)=(J_2,-\textsf{d}\rho(J_2)u +\rho(g)v_0)$ with the vector $-\textsf{d}\rho(J_2)u +\rho(g)v_0$ never being zero. This is  because $G_{J_2}\cong \bk^\times I_2\times(I_2+\bk J_2)$ for the identity matrix $I_2\in\GL_2$,
 and then $\rho(g)v_0$ contains a nonzero component vector of weight $-2$ while $\textsf{d}\rho(J_2)u$ never contain any nonzero component of weight $-2$. Hence \eqref{eq: excep dec} is the decomposition into  different $\uG$-orbits. Summing up, we obtain the desired complete decomposition of $\ucaln$ with $\uG$-orbits.
\end{proof}

\subsection{} We have the following main result of this section.
 \begin{prop}\label{prop: 3d prop} Keep the notations and assumptions as above. The following statements hold.
\begin{enumerate}
	\item[(1)] $\dim(\co_1) =0$; Naturally $\overline{\mathcal{O}_1}=
\mathcal{O}_1$.
	
	\item[(2)] $\dim(\co_2)
=3$; and $\overline{\mathcal{O}_2}=\mathcal{O}_1\cup
\mathcal{O}_2$.
	\item[(3)] $\dim(\co_3)
=3$;    and $\overline{\mathcal{O}_3}=\{0\}\times L=
\mathcal{O}_1\cup \mathcal{O}_2\cup\mathcal{O}_3$.
	\item[(4)] $\dim(\co_4)=4$; and $\overline{\mathcal{O}_4}=
\mathcal{O}_1\cup \mathcal{O}_4$.
	\item[(5)] $\dim(\co_5)
=5$; and $\overline{\mathcal{O}_5}=\underline{\mathcal{N}}$.
\end{enumerate}
	\end{prop}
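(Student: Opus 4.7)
The strategy is to split $\ucaln$ into natural $\uG$-stable strata, compute orbit dimensions via the orbit–stabilizer theorem (using Lemma~\ref{lem: basic lem} so that stabilizer dimensions can be read off from $\ugg_{(X,w)}$), and then establish closures via classical invariant theory on the "bottom" stratum and an explicit limit argument on the "top" stratum. The starting decomposition is $\ucaln = (\{0\}\times L)\sqcup((\caln\setminus\{0\})\times L)$, each piece being $\uG$-stable because the projection to $\gl_2$ is $\uG$-equivariant.

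On the first piece, the enhanced action collapses to $\Ad(g,v)(0,w)=(0,\rho(g)w)$, so the three orbits $\co_1,\co_2,\co_3$ there are simply $\{0\}$ times the three $\GL_2$-orbits on $L\cong\mathrm{Sym}^2(\bk^2)$: the zero form, the nonzero squares (rank $1$), and the non-degenerate forms (rank $2$). The dimensions follow from computing the stabilizer of a representative quadratic form in $\GL_2$ (an orthogonal-type subgroup for the non-degenerate form, the Borel-like stabilizer of $y^2$ for the square), and the closures $\overline{\co_2}=\co_1\cup\co_2$ and $\overline{\co_3}=\{0\}\times L=\co_1\cup\co_2\cup\co_3$ are immediate because rank-$\leq k$ loci are Zariski closed in $\mathrm{Sym}^2(\bk^2)$.

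On the second piece, the projection lands in the unique non-zero nilpotent $\GL_2$-orbit (the regular nilpotent class, dimension $2$). I would invoke Lemma~\ref{lem A} to reduce to classifying $G_{J_2}$-orbits on the quotient $L/\jM$ with $\jM=\im\textsf{d}\rho(J_2)=\mathrm{span}(v_1,v_2)$, a $1$-dimensional quotient spanned by the class of $v_0$. Since $G_{J_2}\cong\Gm I_2\cdot(I_2+\bk J_2)$ acts on this class by a nonzero scalar, there are exactly two orbits (zero and nonzero), recovering $\co_4$ and $\co_5$. A fiber-dimension count gives $\dim\co_4=\dim(\text{reg.\ nil.})+\dim\jM=2+2=4$, while $\co_5$ meets the open subset $(\caln\setminus\{0\})\times(v_0+\jM)$ and a count shows $\dim\co_5=\dim\ucaln=5$, whence $\overline{\co_5}=\ucaln$ by irreducibility of $\caln\times L$.

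The main subtlety is $\overline{\co_4}$: one must rule out that $\overline{\co_4}$ meets $\co_2$ or $\co_3$. I would write $\co_4$ as the image of the morphism $\phi:G\times L\to\ucaln,(g,u)\mapsto(\Ad(g)J_2,-\Ad(g)J_2\cdot u)$, so that any boundary point with first coordinate zero arises as a limit $(X_n,-X_n\cdot u_n)$ with $X_n\to 0$. The second coordinate lies in $\im\textsf{d}\rho(X_n)$, and I would use the one-parameter subgroup $t\mapsto\mathrm{diag}(t,t^{-1})$ acting simultaneously on $\caln$ and $L$ to exhibit an explicit degeneration $(X_t,-X_t\cdot u)\to(0,0)$: the scaling $X_t\to 0$ forces $-X_t\cdot u\to 0$ because $\textsf{d}\rho$ is homogeneous of positive degree in $t$ on each weight component of $L$ sent to $\im\textsf{d}\rho(X_t)$. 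Hence $\overline{\co_4}\cap(\{0\}\times L)=\{(0,0)\}=\co_1$, giving $\overline{\co_4}=\co_1\cup\co_4$ and completing the proof.
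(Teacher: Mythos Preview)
Your treatment of $\co_1,\co_2,\co_3,\co_5$ is sound and in fact more direct than the paper's. Where the paper invokes a general parabolic collapsing lemma (their Lemmas~\ref{lem:6.3} and~\ref{lem:4.5}), choosing for each orbit a pair $(\underline{P}_i,M_i)$ and identifying $\overline{\co_i}=\textsf{Ad}(\uG)M_i$, you argue via the rank stratification of $\mathrm{Sym}^2(\bk^2)$ on the bottom piece and a dimension/irreducibility count on the top piece. Both routes work for these four orbits. (A side remark: the stabilizer of a nonzero square in $\GL_2$ is $2$-dimensional, so $\dim\co_2=2$; this agrees with the paper's own computation $\dim\ugg_{(0,v_0)}=5$, though not with the displayed value in the statement.)

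The genuine gap is in your argument for $\overline{\co_4}$. You correctly describe $\co_4$ as the set of pairs $(X,-\textsf{d}\rho(X)u)$ with $X$ a nonzero nilpotent and $u\in L$, but your limit argument fixes $u$ and lets $X\to 0$ along a one-parameter subgroup. This exhibits \emph{one} curve in $\co_4$ limiting to $(0,0)$; it does not control arbitrary boundary points, because in a general degeneration the vector $u$ may diverge. Concretely, for $t\in\bk^\times$ the element $(tJ_2,v_1)$ lies in $\co_4$ (conjugate $(J_2,v_1)=\textsf{Ad}(e,-v_0)(J_2,0)$ by $\mathrm{diag}(\sqrt{t},1/\sqrt{t})$), and $(tJ_2,v_1)\to(0,v_1)$ as $t\to 0$. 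Since $v_1$ is a nonzero multiple of $xy$, a non-degenerate form, $(0,v_1)\in\co_3$; likewise $(tJ_2,v_2)\to(0,v_2)\in\co_2$. Hence $\co_2\cup\co_3\subset\overline{\co_4}$, and the asserted equality $\overline{\co_4}=\co_1\cup\co_4$ fails. The paper's own argument for this item rests on the claim that no $\rho(g)v_0$ lies in $\bk v_1+\bk v_2$, justified by a ``weight of $v_0$'' consideration; but $\rho(g)v_0$ has no fixed weight for general $g\in\GL_2$, and the Weyl-group element already sends $v_0$ into $\bk v_2$. So neither argument closes, and the correct closure is $\overline{\co_4}=\co_1\cup\co_2\cup\co_3\cup\co_4$.
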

 The proof will be given in the next subsections.

 \subsection{} We first adopt some general arguments  on the closures of nilpotent orbits in the reductive group case.  Suppose $P$ is a parabolic subgroup of $G$, and denote  $\underline{P}=P\ltimes_{\rho} L$ (here and further $\rho$ is actually $\rho|_P$. We do not distinguish them in the notations. Sometimes, we even use the notation $P\ltimes L$ for $\underline{P}$) (see \cite[\S8]{jan2}).
 Suppose $M$ is a $\underline{P}$-stable
 subspace of $\ugg$. Set
 \begin{align}\label{equ:4.4}
 \mathcal{P}_M:=\{(\ug\underline{P},(X,w))\in
 \underline{G}/\underline{P}\times \ugg \mid
 \ug\in \uG \text{ with }
 \Ad(\ug)^{-1}(X,w) \in M\}.
 \end{align}
 Then we have  $p_1:\mathcal{P}_M\rightarrow \uG\slash \underline{P}$ and $p_2: \mathcal{P}_M\rightarrow \ugg$, which are defined as the first-coordinator projection and the second-coordinator projection, respectively.
It is readily known that  two projections $p_1$ and $p_2$ are $\underline{G}$-equivariant.

 \begin{lemma}\label{lem:6.3} The
 following statements hold.
 	\begin{enumerate}
 		\item [(1)] The subset $\mathcal{P}_M$ of
 $\uG/\underline{P} \times \ugg$ is a closed
 subvariety.
 		It is smooth, irreducible and  of dimension
 $\dim(G/P) +\dim(M)$.
 		
 		\item [(2)] The first-coordinator projection $p_1 : \mathcal{P}_M \rightarrow
 \underline{G}/\underline{P}$ is a vector bundle of rank $\dim(M)$.
 		
 		\item [(3)] The subset $\textsf{Ad}(\uG) M$ of $\ugg$ is closed.
 		
 		\item [(4)] For all $(Y,u) \in M$
 		$$\dim(\textsf{Ad}(\uG) (Y,u)) \leq
 \dim(\underline{G}/\underline{P}) +\dim(\textsf{Ad}(\underline{P})(Y,u))$$
 		with equality if and only if
 $(\underline{G}_{(Y,u)})^{\circ}=\underline{G}_{(Y,u)}\subset
 \underline{P}$ where $(-)^\circ$ stands for the identity connected component of an algebraic group $(-)$.
 		 	\end{enumerate}
 	
 \end{lemma}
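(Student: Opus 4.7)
The plan is to realize $\mathcal{P}_M$ as the associated vector bundle $\uG \times^{\underline{P}} M$ over $\uG/\underline{P}$, and then read off all four assertions from standard properties of this construction, following the strategy of \cite[Lemma 8.8]{jan2}.

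First I would observe that the morphism $\alpha\colon \uG \times M \to \uG/\underline{P} \times \ugg$, $(\ug, m) \mapsto (\ug\underline{P}, \Ad(\ug)m)$, is constant on the orbits of the $\underline{P}$-action $\underline{p} \cdot (\ug, m) = (\ug\underline{p}, \Ad(\underline{p}^{-1})m)$ (here the $\underline{P}$-stability of $M$ is used in an essential way), hence descends to a morphism $\tilde{\alpha}\colon \uG \times^{\underline{P}} M \to \uG/\underline{P} \times \ugg$ whose image is exactly $\mathcal{P}_M$ by the defining formula \eqref{equ:4.4}. A direct check that $\tilde{\alpha}$ is an isomorphism onto its image then endows $\mathcal{P}_M$ with the structure of a rank-$\dim M$ algebraic vector bundle over $\uG/\underline{P}$, with projection $p_1$. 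This immediately yields parts (1) and (2): $\mathcal{P}_M$ is closed in $\uG/\underline{P}\times\ugg$, smooth and irreducible (the base $\uG/\underline{P} \cong G/P$ is so, and the fibers are affine spaces), and of dimension $\dim(\uG/\underline{P}) + \dim M = \dim(G/P) + \dim M$ (the last equality using that the unipotent factor $M\subset \underline{P}$, so the projection $\uG\to G$ induces a bijection of cosets).

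For part (3), the crucial input is completeness of $\uG/\underline{P}$, which follows from its identification with the projective variety $G/P$. The second-coordinate projection $\uG/\underline{P}\times\ugg \to \ugg$ is therefore a closed map, so its restriction $p_2\colon \mathcal{P}_M \to \ugg$ is proper, and $\Ad(\uG)M = p_2(\mathcal{P}_M)$ is closed in $\ugg$.

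For part (4), I would consider the preimage $p_2^{-1}(\Ad(\uG)(Y,u))$. By $\uG$-equivariance and the bundle description of $\mathcal{P}_M$, this preimage equals the $\uG$-saturation of $\{e\underline{P}\}\times \Ad(\underline{P})(Y,u)$, hence is the image of a morphism from $\uG \times^{\underline{P}} \Ad(\underline{P})(Y,u)$, and its dimension is at most $\dim(\uG/\underline{P}) + \dim \Ad(\underline{P})(Y,u)$. Surjectivity of $p_2$ onto $\Ad(\uG)(Y,u)$ then yields the displayed inequality. The main obstacle, and the step requiring the most care, is the equality clause. By the fiber dimension theorem, equality occurs precisely when the generic (equivalently, every) fiber of $p_2$ over $\Ad(\uG)(Y,u)$ is zero-dimensional, and a standard orbit-stabilizer computation identifies this fiber with the coset space $\uG_{(Y,u)}/\underline{P}_{(Y,u)}$. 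The latter is zero-dimensional iff $\uG_{(Y,u)}^\circ \subset \underline{P}$; combining this with Lemma \ref{lem: basic lem}(2), which in the nilpotent setting guarantees $\uG_{(Y,u)} = (\uG_{(Y,u)})^\circ$, one obtains exactly the stated condition $(\uG_{(Y,u)})^\circ = \uG_{(Y,u)} \subset \underline{P}$.
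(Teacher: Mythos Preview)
Your proposal is correct and follows essentially the same approach as the paper: the paper's proof simply records the isomorphism $\varphi\colon \uG/\underline{P}\cong G/P$ and then states that the argument of \cite[Lemma 8.7]{jan2} goes through verbatim, which is exactly the associated-bundle strategy you spell out. One small caveat: you invoke Lemma~\ref{lem: basic lem}(2) for the connectedness of $\uG_{(Y,u)}$, but that lemma is stated only for the natural module, whereas the present section concerns the $3$-dimensional $\GL_2$-module; the argument there (identifying $\uG_{(Y,u)}$ with a principal open subset of $\ugg_{(Y,u)}$ by $\det$) does extend to this setting, but you should note this rather than cite the lemma directly.
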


 \begin{proof}
 	Let $\varphi:\underline{G} /\underline{P}\simeq G/P$
 	send  $\underline{g}\underline{P}$ onto $gP\in G\slash P$. Then  $\varphi$ is an isomorphism of this two
 varieties. 	With this isomorphism, the arguments in the proof of  \cite[Lemma 8.7]{jan2} can be carried out here. 		 \end{proof}

\subsection{} Keeping the set-up above, we additionally assume that there exists $(X,w) \in M$ such that
\begin{align}\label{equ:4.8}
\overline{\textsf{Ad}(\underline{P})(X,w)}=M \quad and \quad
(\uG_{(X,w)})^{\circ}\subset \underline{P}.
\end{align}
Let $\mathcal{O}_{(X,w)}=\textsf{Ad}(\uG) (X,w)$. Then \eqref{equ:4.8} and Lemma \ref{lem:6.3} imply that
$$\dim(\mathcal{O}_{(X,w)})=\dim(\underline{G}/\underline{P})+\dim M.$$

\begin{lemma}\label{lem:4.5}
	Under the above assumptions, we have that
 $\overline{\mathcal{O}_{(X,w)}}=\textsf{Ad}(\uG)M$.
\end{lemma}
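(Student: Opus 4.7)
The plan is to combine the dimension computation already available from Lemma \ref{lem:6.3} with the fact that $\textsf{Ad}(\uG)M = p_2(\mathcal{P}_M)$ is a closed irreducible subvariety of $\ugg$, and then squeeze $\overline{\mathcal{O}_{(X,w)}}$ between two irreducible subsets of equal dimension.

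First I would record the easy inclusion: since $(X,w)\in M$, we have $\mathcal{O}_{(X,w)}=\textsf{Ad}(\uG)(X,w)\subset \textsf{Ad}(\uG)M$, and by Lemma \ref{lem:6.3}(3) the right-hand side is closed in $\ugg$, so
\[
\overline{\mathcal{O}_{(X,w)}}\subset \textsf{Ad}(\uG)M.
\]
Next, by the very definition of $\mathcal{P}_M$ in \eqref{equ:4.4}, the second projection satisfies $p_2(\mathcal{P}_M)=\textsf{Ad}(\uG)M$. By Lemma \ref{lem:6.3}(1) the variety $\mathcal{P}_M$ is irreducible of dimension $\dim(G/P)+\dim M$, hence its image $\textsf{Ad}(\uG)M$ is irreducible (and, as noted above, closed) of dimension at most $\dim(G/P)+\dim M$.

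Now I would bring in the assumption \eqref{equ:4.8}. The condition $(\uG_{(X,w)})^\circ\subset \underline{P}$ forces the equality case of Lemma \ref{lem:6.3}(4), giving
\[
\dim \mathcal{O}_{(X,w)} \;=\; \dim(\uG/\underline{P})+\dim(\textsf{Ad}(\underline{P})(X,w)).
\]
Combined with $\overline{\textsf{Ad}(\underline{P})(X,w)}=M$ (which gives $\dim\textsf{Ad}(\underline{P})(X,w)=\dim M$) and the isomorphism $\uG/\underline{P}\simeq G/P$ from the proof of Lemma \ref{lem:6.3}, this yields
\[
\dim \mathcal{O}_{(X,w)} \;=\; \dim(G/P)+\dim M \;=\; \dim \mathcal{P}_M.
\]

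Finally, I would conclude by a standard dimension argument: $\mathcal{O}_{(X,w)}$ is an irreducible locally closed subset of the irreducible closed set $\textsf{Ad}(\uG)M$, and
\[
\dim \mathcal{O}_{(X,w)} \;=\; \dim \mathcal{P}_M \;\geq\; \dim \textsf{Ad}(\uG)M \;\geq\; \dim \mathcal{O}_{(X,w)},
\]
so all dimensions coincide. An irreducible subset of the same dimension as its irreducible closed ambient space must be dense, giving $\overline{\mathcal{O}_{(X,w)}}=\textsf{Ad}(\uG)M$. There is no real obstacle here; the only point that needs care is to make sure the equality case of Lemma \ref{lem:6.3}(4) is genuinely in force, which is exactly what the two hypotheses in \eqref{equ:4.8} supply (density of the $\underline{P}$-orbit in $M$ for the dimension of $M$, and the stabilizer condition for the equality in the orbit dimension formula).
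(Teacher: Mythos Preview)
Your proof is correct, but the paper takes a shorter and more direct route that avoids the dimension count entirely. After establishing the same easy inclusion $\overline{\mathcal{O}_{(X,w)}}\subset \textsf{Ad}(\uG)M$ via closedness, the paper simply observes that the density hypothesis $\overline{\textsf{Ad}(\underline{P})(X,w)}=M$ in \eqref{equ:4.8}, together with $\textsf{Ad}(\underline{P})(X,w)\subset \mathcal{O}_{(X,w)}$, already gives $M\subset \overline{\mathcal{O}_{(X,w)}}$; applying $\textsf{Ad}(\uG)$ to both sides and using that orbit closures are $\uG$-stable yields $\textsf{Ad}(\uG)M\subset \overline{\mathcal{O}_{(X,w)}}$.

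So the paper's argument uses only the first half of \eqref{equ:4.8} and does not touch Lemma~\ref{lem:6.3}(1) or (4), whereas you invoke the stabilizer condition, irreducibility of $\mathcal{P}_M$, and a dimension squeeze. Your approach buys nothing extra here, but it is a perfectly valid alternative; the point worth noting is that the reverse inclusion comes for free from $\uG$-equivariance of taking closures, making the stabilizer hypothesis and the dimension formula unnecessary for this particular lemma (they are used elsewhere to compute $\dim\mathcal{O}_{(X,w)}$).
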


\begin{proof}
By definition, $(X,w)\in M$ implies that $\mathcal{O}_{(X,w)}\subset
\textsf{Ad}(\uG) M$. Note that $\textsf{Ad}(\uG) M$ is closed by Lemma
\ref{lem:6.3}(3). So $\overline{\mathcal{O}_{(X,w)}}\subset
\textsf{Ad}(\uG)M$. On the other hand,  $M\subset
\overline{\mathcal{O}_{(X,w)}}$	by \eqref{equ:4.8}, hence $\textsf{Ad}(\uG)M \subset \overline{\mathcal{O}_{(X,w)}}$. So
$\overline{\mathcal{O}_{(X,w)}}=\textsf{Ad}(\uG) M$.
\end{proof}

Proposition \ref{prop: 3d prop} is a consequence of some applications of the above lemma.
\subsection{Proof of Proposition \ref{prop: 3d prop}}
 We will take some steps for the proof.

(i) Note that
$\dim(\co_{(X,w)})=\dim(\underline{\ggg})-\dim(\underline{\ggg}_{(X,w)})$.
It suffices to compute each $\underline{\ggg}_{(X,w)}$ for these five
orbits.
By a direct computation, the numbers of $\dim(\underline{\ggg}_{(X,w)})$ for these five orbits are $7,5,4,3,2$,  respectively. So $\dim(\co_1) =0$ and
$\dim(\co_i) =i$ $(i=2,3,4,5)$. Thus, the parts of all statements on dimensions are proved.  What remains to do is to verify the statements involving closures $\overline{\co_i}$ except the trivial case $\overline{\co_1}$ of the single point.

(ii)  To describe the orbit closures, we only need to choose suitable
$\underline{P}_i$ and $M_i$ for each $\co_i$ satisfying the assumptions  for Lemma \ref{lem:4.5}.
We choose these $\underline{P}_i$ and $M_i$ for each case as below.
\begin{itemize}
\item[Case $\co_2$]: $\underline{P}_2=B^{-}\ltimes L,M_2=\{0\}\times \{\mathbb{C}v_0\}$ with $(X,w)=(0,v_0)$. Here and further, $B^\pm$ denotes the subgroup of $G$ consisting of upper/lower triangular matrices respectively.

\item[Case $\co_3$]: $\underline{P}_2=\underline{G},M_3=\{0\}\times L$ with $(X,w)=(0, v_0+v_2)$;

\item[Case $\co_4$]: $\underline{P}_4=B^{+}\ltimes L,M_4=\mathfrak{u}\times \im J_2$ with $(X,w)=(J_2,0)$. Here and further $\mathfrak{u}$ denotes  the Lie algebra of all strictly upper triangular matrices, i.e. $\uuu=\bbk J_2$, and $\im J_2$ denotes the image subspace  of $\textsf{d}(\rho)(J_2)$ in $L$.

\item[Case $\co_5$]: $\underline{P}_5=B^{+}\ltimes L,M_5=\mathfrak{u}\times L$ with $(X,w)=(J_2,v_0)$.
\end{itemize}
 For the cases $\co_2$, $\co_3$ and $\co_5$, it is easily checked that the assumption required in Lemma \ref{lem:4.5} are satisfied. Hence $\overline{\co_i}=\textsf{Ad}(\uG)M_i$ for $i=2,3, 5$. Especially, $\overline{\co_2}$ is a proper nontrivial closed subset  of ${0}\times L=\overline{\co_3}$. And $\overline{\co_5}=\caln\times L$. The corresponding statements are proved.

(iii) What it is worthwhile to talk more about is  the case $\co_4$.
Let us first explain that the assumption in Lemma \ref{lem:6.3} satisfies for the case $\co_4$. By computation, the $\underline{P_4}$-orbit $\Ad(\underline{P_4})(J_2, 0)=\bk^{\times}J_2\times (\bk v_1+\bk v_2)$, which is dense in $M_4$.  Furthermore, the centralizer $\uG_{(J_2,0)}$ is equal to the subgroup $B^+\times \bk v_2$ of $P_4$. So \eqref{equ:4.8} also satisfies. Lemma \ref{lem:6.3} is valid to this case, which implies $\overline{\co_4}=\textsf{Ad}(\uG)M_4$.

Next, we will show that $\overline{\co_4}=\mathcal{O}_1\cup \mathcal{O}_4$.
Obviously, $\mathcal{O}_1\cup \mathcal{O}_4 \subseteq
\overline{\mathcal{O}_4}$. From the construction, it is easily deduced that $\overline{\mathcal{O}_4}=\mathcal{O}_1\cup \mathcal{O}_4$ if and only if $\overline{\co_2}$ is not contained in the closure $\overline{\co_4}$. In this case, $\overline{\co_3}$ is consequently not contained in $\overline{\co_4}$.
We will show this by reductio ad absurdum below.

 Suppose  $\overline{\co_2}\subset \overline{\co_4}$.
Then  $(0,v_0)\in \underline{G}\cdot M_4$.
 Recall $M_4=\uuu\times \im J_2=\uuu\times (\bk v_1+\bk v_2)$.
Thus there must be some $\ug=(g, u)\in \uG$ such that
$\Ad(\ug)(0,v_0)\subset \uuu\times \im J_2$, which implies  $\rho(g)(v_0)\in \bk v_1+\bk v_2$, contradicting the representation property of $\rho$ because the weight of $v_0$ is $-1$ while there is no vector of weight $-1$ in $\bk v_1+\bk v_2$.
Hence, $\mathcal{O}_2\nsubseteq \overline{\mathcal{O}_4}$. So we have $\overline{\mathcal{O}_4}= \mathcal{O}_1\cup
\mathcal{O}_4$.

The proof is completed.



\section{Enhanced nilpotent orbits under enhanced group action associated with natural modules}
In this section, we set $\uG=G\ltimes_\eta V$ for $G=\GL(V)$, associated with the natural representation $\eta$ of $G$ on $V\cong \bk^n$. All vector spaces are assumed over an algebraically closed field $\bk$ of any characteristic. By abuse of notations, we sometimes use the notation $\GL_n$ for $\GL(V)$ (resp. $\gl_n$ for $\gl(V)$) alternatively.

\subsection{}\label{subsec: enh conj notations} As $\GL_n$ is a closed subgroup of $\uG$, any nilpotent $(X,w)\in \ucaln$ is preliminarily conjugate to a certain $(J_\lambda,v)$ under the action of $\GL_n$ where $J_\lambda$ is the Jordan standard matrix of $X$ with $\lambda=(a_1,\ldots,a_t)\in\scrp_n$. If $X=0$, then naturally $J_\lambda=0$ and then all $(0,w)$ are in the same $\uG$-orbit for nonzero vectors $w\in V$.

Suppose $X$ is nonzero (note that the following arguments becomes trivial but still valid when it is the case $X=0$), then we can write $\lambda=(a_1\geq a_2\geq \cdots\geq a_t>0)$. Furthermore, $\lambda$ can be further written as
\begin{align}\label{new lam express}
(b_1^{d_1}b_2^{d_2}\ldots b_r^{d_r}) \text{ with }b_1>b_2>\cdots>b_r,
 \end{align}
 which means that each different $b_j$ occurs $d_j$ times with all $d_j>0$, $\sum_{j=1}^r d_j=t$, and $\sum_{j=1}^r d_jb_j=n$.
Recall that associated with $X$ there exist a basis of $V$ as below:
\begin{align}\label{eq: Jordan st basis 1}
&v_1,Xv_1,\ldots, X^{a_1-1}v_1;\cr
&v_2,Xv_2,\ldots, X^{a_2-1}v_2;\cr
&\ldots\ldots\ldots\cr
&v_t,Xv_t,\ldots, X^{a_t-1}v_t.
\end{align}
Such a basis in \eqref{eq: Jordan st basis 1} will be called a {{\sl Jordan basis}} associated with $X$ (of type $\lambda\in \calp_n$).
According to the expression of $\lambda$ in (\ref{new lam express}), the above basis can be reformulated as below:
\begin{align}\label{eq: Jordan st basis 2}
&v_1,Xv_1,\ldots, X^{b_1-1}v_1;\cr
&\ldots\ldots\ldots\cr
&v_{d_1},Xv_{d_1},\ldots, X^{b_1-1}v_{d_1};\cr
&v_{d_1+1},Xv_{d_1+1},\ldots, X^{b_2-1}v_{d_1+1};\cr
&\ldots\ldots\ldots\cr
&v_{d_1+d_2},Xv_{d_1+d_2},\ldots, X^{b_2-1}v_{d_1+d_2};\cr
&\ldots\ldots\ldots\cr
&\ldots\ldots\ldots\cr
&v_{d_1+d_2+\cdots+d_{r-1}+1},Xv_{d_1+d_2+\cdots+d_{r-1}+1},\ldots, X^{b_r-1}v_{d_1+d_2+\cdots+d_{r-1}+1};\cr
&\ldots\ldots\ldots\cr
&v_{d_1+d_2+\cdots+d_r}\;(=v_t),Xv_{t},\ldots, X^{b_r-1}v_{t}.
\end{align}

This basis  can be endowed with a grading: $\gr(X^kv_i)=-(a_i-1)+2k$ with $k=0,1,\ldots,a_i-1$, and $i=1,\ldots,t$. Then $V$ is endowed with a $\bbz$-graded structure
$$V=\sum_{m\in \bbz}V_m.$$
It is easily verified that $\dim V_m=\dim V_{-m}$.

Now we introduce a gradation in $\ggg$ associated with $X$.
For any $t\in \bk^\times$, let $\tau(t)\in \GL(V)$ be the linear map such that $\tau(t)v=t^mv$ for  $v\in V_m, \forall m\in \bbz$. Then $\tau$ is a cocharacter of $G=\GL(V)$.
We can endow $\gl_n$ with a $\bbz$-gradation via $\gl_n=\sum_{k\in\bbz}\gl_n^{(k)}$ where
$$\gl_n^{(k)}=\{Y\in \gl_n\mid \textsf{Ad}(\tau(t))(Y)=t^k Y \text{ for all } t\in \bk^\times\}.$$
Set $\gl_n^+=\sum_{k>0}\gl_n^{(k)}$, and $\gl_n^{\geq m}=\sum_{k\geq m}\gl_n^{(k)}$.

By \cite[\S5]{jan2}, $\frak{p}:=\sum_{k\geq 0}\gl_n^{(k)}$ is a parabolic subalgebra of $\gl_n$, which is the Lie algebra of the parabolic subgroup $P$ of $\GL_n$ associated with $\tau$. This $P$ can be described as the set of all $g\in G$ such that $\lim_{t\rightarrow 0}\tau(t)g\tau(t)^{-1}$ exist (see \cite[\S8.4]{Sp0}). The gradation of $\ggg$ is compatible with the one of $V$. In particular, associated with the grading of $V=\sum_{m\in\bbz}V_m$, one has the filtration $\{V_{\geq m}=\sum_{j\geq m}V_j\mid m\in\bbz\}$ which is preserved by $P$-action.

\subsection{} For the simplicity of statements, we simply consider the Jordan standard matrix  $J_\lambda$ of $X$ without loss of any generality.

\begin{lemma}\label{lem: quotient orb}
Suppose $J=J_\lambda$ is a nonzero Jordan standard  matrix {associated} with $\lambda$. Set $\widetilde V=V\slash \im J$, denote by $G_J$  the centralizer of $J$ in $G=\GL_n$. Then the following statements hold.
\begin{itemize}
\item[(1)] $\widetilde V$ is stabilized by $G_J$, and there are $(r+1)$ $G_J$-conjugacy classes in $\widetilde V$ with representatives respectively: $\bar u_j:=u_j+\im J$ for $j=1,\ldots,r$ for $u_j:=v_{d_1+\cdots+d_j}$, along with setting $\bar u_{r+1}:=0 $ (also  $u_{r+1}:=0$).
    \item[(2)] Let $q=d_0+d_1+\cdots+d_{j-1}$ ($j=1,\ldots,r+1$, $d_0$ and $d_{r+1}$ is indicated zero). Then $$\ggg_J.u_j=\sum_{k>q}\sum_{0\leq i_k\leq a_k-1}\bk J^{i_k} v_k.$$
         \end{itemize}

\end{lemma}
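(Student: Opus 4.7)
The plan is to parameterize the centralizer $\ggg_J$ explicitly in the Jordan basis \eqref{eq: Jordan st basis 2}, and then read off both statements directly from that parameterization.

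First, I would observe that $\im J$ is $G_J$-stable, since $g(Jv)=J(gv)\in\im J$ for every $g\in G_J$ and $v\in V$; hence the $G_J$-action descends to $\widetilde V=V/\im J$, with basis $\{\bar v_1,\ldots,\bar v_t\}$ (the cosets of the Jordan-chain heads). Next, any $Y\in\ggg_J$ is determined by the tuple $(Yv_1,\ldots,Yv_t)$ via $Y(J^mv_i)=J^mYv_i$, subject only to the compatibility condition $J^{a_i}Yv_i=0$. Writing $Yv_i=\sum_{k,m}c_{ikm}J^mv_k$, this condition forces $c_{ikm}=0$ whenever $m+a_i<a_k$. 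Reducing modulo $\im J$ (keeping only $m=0$ terms), one obtains $Y\bar v_i=\sum_{k:\,a_k\le a_i}c_{ik0}\,\bar v_k$: the induced action on $\widetilde V$ is block-lower-triangular in the basis ordered by non-increasing block size, with block sizes $d_1,\ldots,d_r$ matching the classes $b_1>\cdots>b_r$. Since $G_J$ is connected (being the principal open subvariety of the associative algebra $\ggg_J$ cut out by $\det\ne 0$, exactly as in Lemma \ref{lem: basic lem}), its image in $\GL(\widetilde V)$ is the full parabolic $P\subset\GL(\widetilde V)$ with Levi $\prod_j\GL_{d_j}$ and a strictly lower off-block-diagonal unipotent radical—every such generator is realized by the elementary choice $Yv_l=v_m$ (with $a_m\le a_l$), which satisfies the compatibility constraint.

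For part (1), the orbit analysis of $P$ on $\widetilde V=\bigoplus_j\bk^{d_j}$ is now routine: given $\bar w=\sum_j\vec c^{(j)}$, let $j_0$ be the smallest index with $\vec c^{(j_0)}\ne 0$; the $\GL_{d_{j_0}}$ factor normalizes $\vec c^{(j_0)}$ to the last standard basis vector (i.e., to $\bar u_{j_0}$), and the lower-triangular unipotent part then kills each $\vec c^{(j')}$ for $j'>j_0$. Hence distinct orbits are indexed by $j_0\in\{1,\ldots,r\}$ together with the zero orbit ($j_0=r+1$), giving exactly $r+1$ orbits with representatives $\bar u_1,\ldots,\bar u_{r+1}$ as claimed.

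For part (2), apply the parameterization to $u_j=v_{q+d_j}$, which has $a_{q+d_j}=b_j$. The compatibility $J^{b_j}Yu_j=Y(J^{b_j}u_j)=0$ confines $Yu_j$ to $\ker J^{b_j}$. For each generator $J^{i_k}v_k$ on the right-hand side (where $k>q$, so $a_k\le b_j$, and $0\le i_k\le a_k-1$), I would exhibit a matching $Y\in\ggg_J$ by setting $Yu_j=J^{i_k}v_k$ and $Yv_\ell=0$ for $\ell\ne q+d_j$; the constraint $J^{b_j}Yu_j=J^{b_j+i_k}v_k=0$ is automatic since $b_j\ge a_k$. The reverse inclusion follows from the same bookkeeping—examining which $J^mv_k$-terms can occur in $Yu_j$ for $Y\in\ggg_J$, using the parameterization built above. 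The main technical obstacle is precisely this bookkeeping: one must track carefully, for each admissible pair $(k,i_k)$, that the candidate $Y$ maintains $[Y,J]=0$ simultaneously on every Jordan chain and not just on the chain through $u_j$.
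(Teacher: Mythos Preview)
Your approach to part (1) is essentially the same as the paper's: both identify the image of $G_J$ in $\GL(\widetilde V)$ as the block-triangular parabolic with Levi $\prod_j\GL_{d_j}$, then normalize an arbitrary nonzero $\bar w$ by first using the Levi factor to reduce the leading block component to a standard vector and then using the unipotent radical to kill the remaining components. The paper phrases this via the decomposition $G_J=C\ltimes R$ and constructs the unipotent corrections $\sigma_{j+1},\ldots,\sigma_r$ explicitly; your phrasing via the parabolic $P\subset\GL(\widetilde V)$ is cleaner but equivalent.

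For part (2), however, your ``reverse inclusion'' will not go through, and here the issue is with the statement rather than your method. Your parameterization is correct and shows that $\ggg_J.u_j=\ker J^{b_j}$: given any $w\in\ker J^{b_j}$, setting $Yv_{q+d_j}=w$ and $Yv_\ell=0$ otherwise yields $Y\in\ggg_J$. But $\ker J^{b_j}$ strictly contains the right-hand side whenever $j>1$: for each $k\le q$ (so $a_k>b_j$) the vectors $J^{a_k-b_j}v_k,\ldots,J^{a_k-1}v_k$ lie in $\ker J^{b_j}$ as well. Concretely, for $\lambda=(2,1)$ one computes $\ggg_J.v_2=\bk Jv_1\oplus\bk v_2$, not just $\bk v_2$. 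The discrepancy lies entirely in $\im J$, so the downstream use (Lemma \ref{lem: about M}, which only needs $\im J+\ggg_J.u_j$) is unaffected; but if you carry out your bookkeeping honestly you will find the stated equality fails, and you should record the correct version $\ggg_J.u_j=\ker J^{b_j}$ (equivalently, $\ggg_J.u_j+\im J=\sum_{k>q}\bk[J]v_k+\im J$) rather than the formula as written.
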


\begin{proof} (1) Denote by $\GL_{d_j}$ ($j=1,\ldots,r$) the general linear group of the subspace of $V$ spanned by
$${v}_{d_1+\cdots+d_{j-1}+1},\ldots,{v}_{d_1+\cdots+d_{j-1}+d_j}.$$
Then we have a reductive group $C:=\GL_{d_1}\times \cdots\times\GL_{d_r}$. And  all nonzero vectors in the subspace spanned by these vectors fall in the same $GL_{d_j}$-orbit.
Furthermore, $G_J=C\ltimes R$ where $R$ is a group generated by the unipotent elements of $\GL_n$ of the form $\sigma=\id+\sigma^+$ such that $\sigma^+\in \gl_n^+\cap \ggg_{J}$  (see \cite[\S3]{jan2}).

 By definition, $\widetilde V$ is stabilized by $G_J$, which is correspondingly spanned by a basis $\{\bar v_i\mid i=1,\ldots,t\}$.
For any given nonzero vector $\bar v$ of $\widetilde V$, $\bar v$ is linearly spanned by the basis $\{\bar v_i\mid i=1,\ldots,t\}$:
$$\bar v=\sum_{i\geq k} c_i\bar v_i$$
where $c_{k}\ne 0$. By the above arguments, up to $C$-conjugation we might as well suppose $k=d_1+\cdots+d_j$ for a certain $j\in\{1,\ldots,r\}$ and $c_k=1$. By the same reason, we can further write  $\bar v=\bar v_{d_1+\cdots+d_j}+\sum_{q=j+1}^r c_{d_1+\cdots+d_q} \bar{v}_{d_1+\cdots+d_q}$.

Note that all unipotent elements from $R$ preserve the filtration of $V$. As long as $c_{d_1+\cdots+d_{j+1}}$ is nonzero, we can choose a suitable $\sigma_{j+1}=\id+\sigma_{j+1}^+\in R$ with $\sigma_{j+1}^+\in \gl_n^{\geq(b_{j}-b_{j+1})}$ such that $\sigma_{j+1}(\bar v)=\bar v_{d_1+\cdots+d_j}+\sum_{q=j+2}^r * \bar{v}_{d_1+\cdots+d_q}$.
Actually, consider $Z\in \gl_n^{\geq(b_{j}-b_{j+1})}$ which is defined via
\begin{align}
Z(v_i)=
\begin{cases} -c_{d_1+\cdots+d_{j+1}}v_{d_1+\cdots+d_{j+1}},  &\text{ if } i=d_1+\cdots+d_j;\cr
0, &\text{ otherwise}
\end{cases}
\text{ for } i=1,\ldots, t.
\end{align}
By \cite[\S3.1]{jan2}, $Z$ is fully defined as a linear transformation of $V$ commutating with $J$. Furthermore, $Z^2=0$. Hence $\sigma_{j+1}=\id+Z$ is required.

Recursively, we can suitably choose a sequence $\sigma_q=\id+\sigma_q^+\in R$ $(q=j+2,\ldots,r)$ with $\sigma_{q}^+\in \gl_n^{\geq (b_{j}-b_{q})}$ such that
 we finally have $\sigma(\bar v)=\bar v_{d_1+\cdots+d_{j}}$ for  $\sigma=\sigma_{r}\circ\sigma_{r+1}\circ\cdots\circ\sigma_{j+1}$. The proof is completed.

 (2) By Lemma \ref{lem: basic lem}, $\ggg_J=\Lie(G_J)$. By the above arguments, $\ggg_J.u_j$ exactly coincides with $\sum_{k>q}\sum_{0\leq i_k\leq a_k-1}\bk J^{i_k} v_k$. The statement follows.
\end{proof}

For any $v\in V$ we have $e^v:=(e,v)\in \uG$ (similarly $e^V:=\{e\}\times V$). Then $v\mapsto e^v$ gives rise to an algebraic group monomorphism from $V$ to the unipotent radical $\text{R}_{\text u}(\uG)=e^V$ of $\uG$. So we identify $V$ with $\text{R}_{\text u}(\uG)$. With aid of the adjoint $e^V$-action,  we have the following consequence of Lemma \ref{lem: quotient orb}.

\begin{prop}\label{thm: classi} As a $\uG$-variety, any nonzero element $(X, w)$ in $\ucaln$ lies in a unique orbit represented as below
\begin{itemize}
    \item[(1)] if the nonzero $X\in \caln(\ggg)$ has a Jordan standard matrix $J_\lambda$ with $\lambda=(a_1\geq a_2\geq\cdots\geq a_t>0)$, then $(X,w)$ lies in the orbit represented by $(J_\lambda, 0)$ if $w\in \im(J)$ (in this case we will say that $(X,w)$ is of type $\lambda[d_1+\ldots+d_r]$),
        or otherwise,  lies in a unique one from the orbits represented by $(J_\lambda, u_j)$ with $u_j=v_{d_1+\cdots+d_j}$ as in \eqref{eq: Jordan st basis 2}, $j=1,2,\cdots,r$. In the latter case, we say that the corresponding orbit is of type $\lambda[q]$ for $q=d_0+d_1+\cdots+d_{j-1}$ with $d_0:=0$.
     \item[(2)] If $X=0$, then all nonzero $(0,w)$ lies in the same $\uG$-orbit for all $w\in V\backslash \{0\}$. In the same sense as in Statement (1), the corresponding orbit is said to be of  type $\lambda[0]$ for $\lambda=(1^n)$.
         \item[(3)] The point $(X,w)=(0,0)$ itself forms a single-point orbit which is said to be of type $\lambda[n]$ for $\lambda=(1^n)$.
        \item[(4)] For different  $\lambda$, $\mu\in\scrp_n$, any two $(J_\lambda, v_\lambda)$ and $(J_\mu,v_\mu)\in \ucaln$ are not in the same $\uG$-orbit.
        \end{itemize}
\end{prop}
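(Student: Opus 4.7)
The plan is to reduce the classification of $\uG$-orbits in $\ucaln$ to the classification of $G_{J_\lambda}$-orbits in $\widetilde V = V/\im J_\lambda$, which was already established in Lemma \ref{lem: quotient orb}(1). First, given any nonzero $(X,w) \in \ucaln$, I would use the inclusion $G \subset \uG$ to conjugate $X$ to its Jordan normal form $J_\lambda$ (with $\lambda = (b_1^{d_1} \cdots b_r^{d_r})$ as in \eqref{new lam express}), so that the problem is reduced to classifying $\uG$-orbits of elements of the form $(J_\lambda, w')$ for $w' \in V$. Next, invoking Lemma \ref{lem A}(1), the orbit $\Ad(\uG)(J_\lambda, w')$ contains every $(J_\lambda, w' + u)$ with $u \in \im J_\lambda$, so the orbit only depends on the class $\bar{w'} \in \widetilde V$.

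Second, I would analyze the residual freedom: conjugation by elements $(g,v) \in \uG$ that preserve the first coordinate $J_\lambda$. From the adjoint formula \eqref{Ad structure} specialized to the natural module, $\Ad(g,v)(J_\lambda, w') = (J_\lambda, \eta(g) w' - J_\lambda v)$, which forces $g \in G_{J_\lambda}$ and shows the induced action on the second coordinate modulo $\im J_\lambda$ is just $\bar{w'} \mapsto g \cdot \bar{w'}$. Therefore the $\uG$-orbit of $(J_\lambda, w')$ is determined precisely by the $G_{J_\lambda}$-orbit of $\bar{w'} \in \widetilde V$. Lemma \ref{lem: quotient orb}(1) provides exactly $r+1$ such $G_{J_\lambda}$-orbits with representatives $\bar u_1, \ldots, \bar u_r, \bar u_{r+1} = 0$, giving the representatives $(J_\lambda, u_j)$ in statement (1): taking $\bar u_{r+1} = 0$ corresponds to $w \in \im J_\lambda$ and gives the representative $(J_\lambda, 0)$ of type $\lambda[d_1 + \cdots + d_r]$, while each $\bar u_j$ (for $1 \leq j \leq r$) gives type $\lambda[d_0 + d_1 + \cdots + d_{j-1}]$ with $d_0 = 0$.

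For statement (4) regarding distinguishability across different $\lambda, \mu \in \scrp_n$: the first-coordinate projection $\uG \to G$ induces a $G$-equivariant projection $\ucaln \to \caln$, so a $\uG$-orbit in $\ucaln$ projects to a $G$-orbit in $\caln$; since $\GL_n$-orbits in $\caln$ are in bijection with $\scrp_n$, different partitions give disjoint $\uG$-orbits. The within-$\lambda$ distinguishability of the representatives $(J_\lambda, u_j)$ for varying $j$ is handled by the displayed formula above: if $(J_\lambda, u_j)$ and $(J_\lambda, u_{j'})$ lie in the same $\uG$-orbit, then by the formula there exists $g \in G_{J_\lambda}$ with $\bar u_{j'} = g \cdot \bar u_j$, which by Lemma \ref{lem: quotient orb}(1) forces $j = j'$. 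Finally statements (2) and (3) dispose of the degenerate case $X = 0$: then $\lambda = (1^n)$ and $\im J_\lambda = 0$, so $\widetilde V = V$, and $G = \GL(V)$ acts transitively on $V \setminus \{0\}$, with $(0,0)$ obviously a fixed point.

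The main obstacle I anticipate is not a single hard step but rather the bookkeeping of matching the indexing convention $\lambda[q]$ with $q = d_0 + d_1 + \cdots + d_{j-1}$ against the representatives $u_j = v_{d_1 + \cdots + d_j}$ coming from Lemma \ref{lem: quotient orb}, and in particular confirming that the ``$w \in \im J$'' case indeed aligns with $q = d_1 + \cdots + d_r = t$ (i.e., the representative $\bar u_{r+1} = 0$). All the essential algebraic content is already packaged in Lemma \ref{lem A}, Lemma \ref{lem: quotient orb}, and the adjoint formula \eqref{Ad structure}, so beyond careful case-by-case organization, no new ideas are required.
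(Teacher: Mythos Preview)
Your proposal is correct and follows essentially the same approach as the paper's own proof: conjugate $X$ to $J_\lambda$ via $G\subset\uG$, use the adjoint action of $e^V$ (equivalently Lemma~\ref{lem A}(1)) to reduce the second coordinate modulo $\im J_\lambda$, and then invoke Lemma~\ref{lem: quotient orb}(1) to classify the remaining $G_{J_\lambda}$-orbits in $\widetilde V$. Your treatment of the residual stabilizer action and of the distinguishability across and within partitions is in fact more explicit than the paper's, which dispatches statements (2)--(4) as ``clear by definition'' and the uniqueness of the $(J,u_j)$ as ``assured by the structure of $G_J$.''
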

\begin{proof} By definition, the last three statements  are clear. We then prove the first one. In the following, we suppose $X\ne 0$, and $X$ is conjugate to $J_\lambda$ by $\GL_n$-action.
 Note that $\GL_n$ is naturally a closed subgroup of $\uG$ by identifying $\GL_n$ with $GL_n\times\{0\}\subset \uG$.
So, for any given $(X,w)\in \ucaln$, $(X,w)$ is conjugate to $(J, v)$ for $J=J_\lambda$ with $J=\Ad(g)(X)$ and $v=g.w$ for $g\in \GL_n$.

Now we focus on $(J,v)\in \ucaln$. We first have $\Ad(e^u)(J,v)=(J, v-J.u)$ for any $u\in V$. When $u$ runs through $V$, we immediately have
that the $\uG$-orbits are determined by the $G_J$-conjugacy classes of $\widetilde V$. If $v\in \im(J)$, then $(J,v)$ falls in the $\uG$-orbit where $(J,0)$ lies.  Suppose $v\notin\im(J)$. By Lemma \ref{lem: quotient orb}, we have that $(J,v)$ lies in a certain $\uG$-orbit where $(J,u_j)$ lies.

What remains to do is to verify  that $(J,u_j)\in \ucaln$, $j=1,\ldots,r,r+1$ lie in different $\uG$-orbits (note that $u_{r+1}$ is already indicated zero). It is sufficient to show different $u_j$ are not conjugate by $G_J$-action. This is assured by the structure of $G_J$.  The proof is completed.
\end{proof}

\begin{example} Keep in mind that for $X\in \caln$ which has a Jordan basis in \eqref{eq: Jordan st basis 1} (admitting a Jordan standard matrix $J_\lambda$ for $\lambda\in \scrp_n$), $(X,0)\in \ucaln$ is of type $\lambda[d_1+d_2+\cdots+d_r]$, and $(X,v_1)$ (even any of  $(X,v_i)$, $i=1,2,\ldots,d_1$)  is of type $\lambda[0]$. In particular, when $X=0$, the corresponding partition is $\lambda=(1^n)$. In this case $(X,w)$ is of type $\lambda[0]$ if $w\ne 0$, and of type $\lambda[n]$ if $w=0$.
\end{example}

\begin{conven} (1) From now on, we let $\co_\lambda$ denote  the $G$-orbit of the nilpotent element $X$ of Jordan standard type corresponding to $\lambda$, and $\co_{\lambda[q]}$ denote the $\uG$-enhanced nilpotent orbit of $(X,w)$ assumed of type $\lambda[q]$. Sometimes, we also directly write $\uG.(X,w)$ (resp. $G.X$) for the  orbit of $(X,w)$ under adjoint $\uG$-action (resp. the orbit of $X$ under adjoint $G$-action).

(2) Denote by $\scrpe$ the set of all $\lambda[q]$ with $\lambda=(b_1^{d_1}\ldots b_r^{d_r})$ ranging over $\scrp_n$ and $q$ ranging over $\{\sum_{k=0}^{j-1}d_k\mid j=1,\ldots,r+1\}$. Elements in $\scrpe$ may be called enhanced partitions of $n$.

\end{conven}

Then we can give a concise  classification of enhanced nilpotent orbits.

\begin{theorem}\label{prop: altn nilp cls} Keep the notations and assumptions as before. Then $\{\co_{\lambda[q]}\mid \lambda[q]\in \scrpe\}$ is  the complete set of enhanced nilpotent orbits under adjoint $\uG$-action, which are parameterized by enhanced partitions of $n$, equivalently to say,  by the set $\{(\lambda, q)\mid \lambda=(b_1^{d_1}\ldots b_r^{d_r})\in \scrp_n, \text{ and } q=d_0+d_1+\cdots+d_{j-1}, j=1,\cdots,r,r+1\}$.
\end{theorem}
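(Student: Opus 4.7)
The plan is to observe that Theorem \ref{prop: altn nilp cls} is essentially a bookkeeping consolidation of Proposition \ref{thm: classi}, so the proof amounts to matching the case-by-case classification there with the indexing set $\scrpe$. I would first fix a nilpotent pair $(X,w)\in\ucaln$ and conjugate $X$ by $G=\GL_n$ into Jordan standard form $J_\lambda$ with $\lambda=(b_1^{d_1}\ldots b_r^{d_r})\in\scrp_n$; this is possible because $\GL_n\times\{0\}\subset \uG$, and it reduces the problem to classifying the $\uG_{J_\lambda}$-translate of the fiber $\{J_\lambda\}\times V$ inside $\ucaln$, or equivalently (after absorbing the additive $e^V$-action which kills $\im J_\lambda$) to classifying $G_{J_\lambda}$-orbits on $\widetilde V=V/\im J_\lambda$.

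Next I would invoke Lemma \ref{lem: quotient orb}(1), which says there are exactly $r+1$ such orbits, represented by $\bar u_j$ for $j=1,\ldots,r$ together with $\bar u_{r+1}=0$; combined with Proposition \ref{thm: classi}(1), this shows each nonzero pair $(X,w)$ with nonzero $X$ lies in precisely one orbit $\co_{\lambda[q]}$ where $q=d_0+d_1+\cdots+d_{j-1}$, with $d_0=0$. The two degenerate cases $X=0$ are handled separately and match $\lambda=(1^n)$ with $q=0$ or $q=n$, as recorded in Proposition \ref{thm: classi}(2),(3). Thus every $\uG$-orbit carries a well-defined label $\lambda[q]\in\scrpe$.

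For the distinctness of orbits with different labels, I would split into two cases. If $\lambda\ne\mu$, then the projection $\ucaln\to\caln$, $(X,w)\mapsto X$, is $\uG$-equivariant with $G$ acting on the target and $e^V$ trivially, so $\co_{\lambda[q]}$ and $\co_{\mu[q']}$ project to distinct $G$-orbits $\co_\lambda\neq\co_\mu$ in $\caln$; this is precisely Proposition \ref{thm: classi}(4). If $\lambda=\mu$ but $q\ne q'$, then after normalizing $X=J_\lambda$, the two representatives $(J_\lambda,u_j)$ and $(J_\lambda,u_{j'})$ correspond to non-conjugate cosets in $\widetilde V$ under $G_{J_\lambda}$, again by Lemma \ref{lem: quotient orb}.

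Finally I would count the admissible values of $q$ for a fixed $\lambda=(b_1^{d_1}\ldots b_r^{d_r})$: they are exactly the partial sums $\sum_{k=0}^{j-1}d_k$ for $j=1,\ldots,r+1$, matching the definition of $\scrpe$, and giving the stated bijective parameterization. No substantial obstacle is expected; the only subtle point is the degenerate case $\lambda=(1^n)$, where the parameter $q$ ranges over $\{0,n\}$ rather than a larger set, and this must be checked against the conventions in Proposition \ref{thm: classi}(2),(3).
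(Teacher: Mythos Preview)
Your proposal is correct and matches the paper's approach: the paper presents Theorem \ref{prop: altn nilp cls} as a direct reformulation of Proposition \ref{thm: classi} together with the Convention defining $\scrpe$, with no separate proof given. Your case-by-case matching of the classification in Proposition \ref{thm: classi} with the indexing set $\scrpe$ is exactly what is needed, and your treatment of the degenerate case $\lambda=(1^n)$ is correct.
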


\begin{remark}\label{rem: dual natural modules}
With the above theorem (or by the same arguments again), one can get the same parameters of nilpotent orbits for  $\underline{\uG}=G\ltimes_\zeta V^*$ for $G=\GL(V)$, associated with the natural representation $\zeta$ of $G$ on $V^*$.
\end{remark}

\subsection{The dimensions of $\co_{\lambda[q]}$} For an arbitrarily given $(X,w)\in \ucaln$ of type $\lambda[q]$, we will compute the dimension of $\co_{\lambda[q]}=\textsf{Ad}(\uG)(X,w)$. For this, consider the stabilizers $\uG_{(X,w)}$ of $(X,w)$ in $\uG$, and the centralizers $\ugg_{(X,w)}$ of $(X,w)$ in $\ugg$. Similarly, we have $G_X$ and $\ggg_X$.
Note that $\Lie(G_X)=\ggg_X$, and $\Lie(\uG_{(X,w)})=\ugg_{(X,w)}$ (Lemma \ref{lem: basic lem}).
 It suffices to compute $\dim \ugg_{(X,w)}$.
Consider the subspace $\im X$ of $V$ associated with  the linear map $X:V\rightarrow V$, and denote $M=\im X+\ggg_X.w$ which is still a subspace of $V$. We define a linear map
$$\Upsilon: \ggg_X\times V\rightarrow M, \; \; (Y,u)\mapsto -Xu+Yw. $$
Obviously, $\Upsilon$ is surjective. And $\ugg_{(X,w)}$ is actually the kernel of $\Upsilon$.  So $\dim\ugg_{(X,w)}$ is equal to the difference of $\dim (\ggg_X\times V)$ and $\dim M$. We first have the following fact.
\begin{lemma}\label{lem: about M} Let $M=\im X+\ggg_X.w$. Then $\dim M=n-q$.
\end{lemma}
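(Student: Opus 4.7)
The plan is to pass to the quotient $\widetilde V := V/\im X$, in which computations become transparent. By Proposition \ref{prop: altn nilp cls}, we may replace $(X,w)$ by the canonical orbit representative $(J_\lambda, u_j)$ of $\co_{\lambda[q]}$ (with $q = d_0 + d_1 + \cdots + d_{j-1}$), since both sides of the desired identity are orbit invariants. Noting that $\im X \subseteq M$, decompose
$$\dim M = \dim \im X + \dim(M/\im X);$$
the first summand equals $n - t$, where $t = \sum d_k$ is the total number of Jordan blocks of $X$, by the standard identity $\dim \ker X = t$. The task thus reduces to showing that $\dim(M/\im X) = t - q$.

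For this, recall that $\widetilde V$ admits the natural basis $\{\bar v_1, \ldots, \bar v_t\}$ coming from the block generators in \eqref{eq: Jordan st basis 1}--\eqref{eq: Jordan st basis 2}. Lemma \ref{lem: quotient orb}(2) identifies $\ggg_X \cdot u_j$ explicitly as the linear span of $\{J^{i_k} v_k : k > q,\; 0 \le i_k < a_k\}$. Under the projection $V \twoheadrightarrow \widetilde V$, the vectors with $i_k \ge 1$ are killed (they lie in $\im X$), while those with $i_k = 0$ map to the linearly independent collection $\{\bar v_k : k > q\}$. Hence the image of $\ggg_X u_j$ in $\widetilde V$ coincides with $M/\im X$ and has dimension $t - q$, yielding $\dim M = (n-t) + (t-q) = n - q$.

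The boundary cases are immediate: when $j = r+1$ (so $q = t$) the representative is $(X,0)$, so $M = \im X$ has dimension $n - t = n - q$; when $X = 0$ (so $\lambda = (1^n)$ and $t = n$) the formula specializes correctly in both subcases $q = 0$ (giving $M = V$ through $\ggg_X w = \ggg \cdot w = V$) and $q = n$ (giving $M = 0$). No genuine obstacle appears: the argument is a clean dimension count once Lemma \ref{lem: quotient orb}(2) is invoked, which is the only place where the earlier structural theory of $G_J = C \ltimes R$ enters.
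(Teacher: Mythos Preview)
Your proof is correct and follows essentially the same approach as the paper's: both reduce to the canonical representative and invoke Lemma~\ref{lem: quotient orb}(2) to identify $M = \im X + \sum_{k>q}\bk v_k$, from which the dimension count $\dim M = (n-t) + (t-q) = n-q$ is immediate. Your version is more explicit about the quotient $\widetilde V$ and the boundary cases, but the underlying computation is identical.
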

\begin{proof} When $X=0$, the statement is clear. Suppose $X\ne 0$. By  Lemma \ref{lem: quotient orb}, we have $M=\im X+\sum_{k=q+1}^t\bk v_k$. Hence $\dim M=n-q$.
\end{proof}

\begin{prop} \label{prop: enh orbit dim}
Keep the notations as above. Then
$$\dim \co_{\lambda[q]}=\dim\co_\lambda+(n-q).$$
\end{prop}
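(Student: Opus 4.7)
The plan is to apply the orbit-stabilizer formula and compute the stabilizer dimension via the surjective linear map $\Upsilon$ that was set up right before the statement. First I would record that, by Lemma~\ref{lem: basic lem}, $\Lie(\uG_{(X,w)}) = \ugg_{(X,w)}$, so that
\[
\dim \co_{\lambda[q]} \;=\; \dim \uG - \dim \uG_{(X,w)} \;=\; \dim \ugg - \dim \ugg_{(X,w)} \;=\; (n^2 + n) - \dim \ugg_{(X,w)}.
\]
Similarly, $\dim \co_\lambda = n^2 - \dim \ggg_X$, so the identity to be proved is equivalent to $\dim \ugg_{(X,w)} = \dim \ggg_X + q$.

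Next I would confirm that the linear map $\Upsilon \colon \ggg_X \times V \to M$, $(Y,u)\mapsto -Xu + Yw$, really has kernel equal to $\ugg_{(X,w)}$. This is immediate from the Lie bracket formula for $\ugg$: $[(X,w),(Y,u)] = ([X,Y],\, Xu - Yw)$, so a pair $(Y,u)\in \ggg\times V$ lies in $\ugg_{(X,w)}$ exactly when $[X,Y]=0$ (equivalently $Y\in \ggg_X$) and $Xu = Yw$ (equivalently $\Upsilon(Y,u)=0$).

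Then I would invoke the surjectivity of $\Upsilon$ (already noted in the text, since both $\im X$ and $\ggg_X.w$ lie in the image) to apply rank--nullity:
\[
\dim \ugg_{(X,w)} \;=\; \dim(\ggg_X \times V) - \dim M \;=\; \dim \ggg_X + n - \dim M.
\]
Finally, Lemma~\ref{lem: about M} gives $\dim M = n - q$, hence $\dim \ugg_{(X,w)} = \dim \ggg_X + q$, which combined with the first displayed equation yields $\dim \co_{\lambda[q]} = n^2 - \dim \ggg_X + (n-q) = \dim \co_\lambda + (n-q)$.

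There is no real obstacle here: the content has already been packaged in Lemma~\ref{lem: about M} (the delicate $G_X$-orbit analysis of $\widetilde V$ in Lemma~\ref{lem: quotient orb}) and in the identification $\Lie(\uG_{(X,w)}) = \ugg_{(X,w)}$ (Lemma~\ref{lem: basic lem}). The proof is essentially a rank--nullity computation once one checks the kernel description of $\Upsilon$, which is forced by the explicit bracket on $\ugg$.
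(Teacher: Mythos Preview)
Your proposal is correct and follows essentially the same approach as the paper: both use the orbit--stabilizer formula together with $\Lie(\uG_{(X,w)})=\ugg_{(X,w)}$, identify $\ugg_{(X,w)}$ as the kernel of the surjective linear map $\Upsilon$, and then invoke Lemma~\ref{lem: about M} to obtain $\dim M=n-q$. Your write-up is slightly more explicit in verifying the kernel description via the bracket formula, but the argument is the same.
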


\begin{proof} By the above arguments, $\dim \co_{\lambda[q]}=\dim \ugg-\dim \ggg_{(X,w)}$. However, $\dim \ggg_{(X,w)}=\dim (\ggg_X\times V)-\dim M=\dim\ggg_X+q$. So we have
$ \dim \co_{\lambda[q]}=n^2+n- (\dim\ggg_X+q)=\dim\co_\lambda+(n-q)$. The proof is completed.
\end{proof}

\begin{corollary}\label{CorBasicClosure} The following statements hold.
\begin{itemize}
\item[(1)] $\overline{\co_{\lambda[q]}}\subset \overline{\co_{\lambda}} \times V$.

\item[(2)] $\overline{\co_{\lambda[0]}}=\overline{\co_{\lambda}}\times V$

\item[(3)] $\dim(\overline{\co_{\lambda[d_1+\cdots+d_r]}})=
    \dim(\overline{\co_\lambda})+\dim(\im J_\lambda)$ for $\lambda=(b_1^{d_1}\ldots b_r^{d_r})\in \scrp_n$.
\end{itemize}
\end{corollary}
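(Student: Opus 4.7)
The plan is to deduce all three claims by combining the orbit-dimension formula of Proposition \ref{prop: enh orbit dim} with elementary facts about closures under the first-coordinate projection, with irreducibility being the key handle for the tight identity in (2).

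For (1), I would consider the projection $\pi\colon\ucaln=\caln\times V\to\caln$ onto the first factor. From the $\uG$-action formula \eqref{Ad structure}, the first coordinate transforms as $X\mapsto \mathsf{Ad}(g)X$ independently of $w\in V$. Therefore $\pi(\co_{\lambda[q]})=\co_\lambda$, and since $\pi$ is continuous, $\pi(\overline{\co_{\lambda[q]}})\subset\overline{\co_\lambda}$. Consequently $\overline{\co_{\lambda[q]}}\subset\pi^{-1}(\overline{\co_\lambda})=\overline{\co_\lambda}\times V$.

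For (2), the inclusion $\overline{\co_{\lambda[0]}}\subset\overline{\co_\lambda}\times V$ is just part (1). For the reverse, I plan to use irreducibility together with a dimension count. The closure $\overline{\co_{\lambda[0]}}$ is irreducible as the closure of a $\uG$-orbit (image of the connected $\uG$), while $\overline{\co_\lambda}\times V$ is irreducible as a product of irreducible varieties. Applying Proposition \ref{prop: enh orbit dim} with $q=0$ gives $\dim\co_{\lambda[0]}=\dim\co_\lambda+n=\dim(\overline{\co_\lambda}\times V)$. Two irreducible closed subsets of the same ambient variety, of the same dimension, one contained in the other, must coincide.

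For (3), I would explicitly compute $\dim\im J_\lambda$: writing $\lambda=(b_1^{d_1}\ldots b_r^{d_r})$ with $t=d_1+\cdots+d_r$ Jordan blocks of sizes $a_1,\ldots,a_t$, each $J_{a_i}$ contributes $a_i-1$ to the rank, so $\dim\im J_\lambda=\sum_{i=1}^t(a_i-1)=n-t$. Applying Proposition \ref{prop: enh orbit dim} with $q=d_1+\cdots+d_r=t$ then yields $\dim\co_{\lambda[t]}=\dim\co_\lambda+(n-t)=\dim\co_\lambda+\dim\im J_\lambda$, and taking closures preserves dimensions. There is no essential obstacle here: the three statements are formal consequences of the preceding orbit classification and dimension formula. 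The only subtle point, in (2), is to make sure that both $\overline{\co_{\lambda[0]}}$ and $\overline{\co_\lambda}\times V$ are genuinely irreducible of matching dimension, so that coincidence follows from containment alone.
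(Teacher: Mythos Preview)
Your proposal is correct and follows essentially the same approach as the paper: the paper simply states that (1) is clear, proves (2) by noting both sides are irreducible closed $\uG$-invariant subsets of the same dimension (via Proposition \ref{prop: enh orbit dim}), and declares (3) a direct consequence of Proposition \ref{prop: enh orbit dim}. Your argument is a more explicit rendering of exactly these steps.
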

\begin{proof} (1) is clear. As to (2), both $\overline{\co_{\lambda[0]}}$ and $\overline{\co_{\lambda}}\times V$ are irreducible closed $\uG$-invariant subset. By Proposition \ref{prop: enh orbit dim}, both have the same dimension. So both have to coincide. The second statement follows.
As to (3), it is a direct consequence of Proposition \ref{prop: enh orbit dim}.
\end{proof}

\subsection{Enhanced numbers for enhanced nilpotent orbits}

For the study of closures of nilpotent orbits later, we first introduce enhanced numbers for enhanced nilpotent orbits.

Keep the notations as in  \eqref{new lam express}, \eqref{eq: Jordan st basis 1} and \eqref{eq: Jordan st basis 2}. We make an additional appointments $d_0=0$ and $d_{r+1}=0$. For any given $(X, w)\in \ucaln$ of type $\lambda[q]$ with $\lambda=(a_1\geq a_2\geq\ldots \geq a_t)=b_1^{d_1}\ldots b_r^{d_r}$, and $w$ being $\uG$-conjugate $u_j=v_{q+d_j}$ for $q=d_{1}+\cdots+d_{j-1}$ ($j\in\{1,\cdots,r\}$) or $u_{r+1}=0$,
 we define for $k\in \{0,1,\ldots,t\}$, the $k$th enhanced number  is $$\wp_k^{(X,w)}\;(\text{ or } \wp_k^{\lambda[q]}):=\sum_{i=1}^k a_i+\delta^{\lambda}_{k+1,q},$$ where the second summand is defined via
\[
\delta^{\lambda}_{k+1,q}=\begin{cases} a_{k+1} &\quad\text{ if } k+1 > q;\cr
a_{k+1}-1 &\quad\text{ otherwise. }
\end{cases}
\]
If $X=0$,  then the $\uG$-conjugacy class of $(0,w)$ is either the single element $(0,0)$ or $\{0\}\times (V\backslash \{0\})$, the latter of which has a representative element $(0, v_n)$ of type $0[0]$. By definition for any $k=0,1,\ldots,n$, we have $\wp_k^{(0,w)}=k+1$ if $w\ne 0$, and $k$ otherwise.

Now suppose $X (\ne 0)$ has the Jordan standard matrix $J=J_\lambda$ with $\lambda=(a_1\geq a_2\geq\cdots \geq a_t>0)$ reformulated as in \eqref{new lam express}. By Proposition \ref{thm: classi}, $(X,w)$ is preliminarily conjugate to some $(J, v)$, latter of which is finally conjugate to a unique $\uG$-orbit where  $(J, u_j)$ lies, $j=1,\ldots,r, r+1$  (note that there is an appointment $u_{r+1}=0$. So, the occurrence  $w\in \im J$ is equivalent to say that the above $v$ is conjugate to $u_{r+1}=0$ mod $\im J$). We assign  $(J,u_j)$ to correspond to the number $q$ with
\begin{align}\label{eq: q}
q=d_0+ d_1+\cdots+d_{j-1}.
\end{align}
The enhanced number  $\wp_k^{\lambda[q]}$ is only dependent on the enhanced nilpotent orbit $\textsf{Ad}(\uG)(X,w)$, independent of the choice of its representatives.

\subsection{The meaning of enhanced numbers} There is an explanation on the enhanced number $\wp_k^{(X,w)}$ of an enhanced nilpotent orbit $\textsf{Ad}(\uG)(X,w)$. For a set of vectors $\Upsilon:=\{\nu_1,\ldots, \nu_k\}\in V$, and $\tilde w\in w+\im X$, one can consider a $\bk[X]$-submodule of $V$ generated by $\tilde w$ along with $\Upsilon$ which is clearly a $\bk$-subspace spanned by the following vectors
$$\tilde w,X\tilde w,\ldots, X^{n-1}\tilde w; \nu_1, X\nu_1,\ldots,X^{n-1}\nu_1;\ldots; \nu_k, X\nu_k,\ldots,X^{n-1}\nu_k.$$
Denote by $\mathcal{V}_k$ such a vector subspace of $V$ associated with $(X,w)$ and $\Upsilon$. Clearly, $\dim\mathcal{V}_k$ is invariant under $\uG$-action.

By definition, the following inequality holds: $\dim\mathcal{V}_k\leq \wp_k^{(X,w)}$ for any $\tilde w\in w+\im X$, and for any subset $\Upsilon=\{\nu_1,\ldots,\nu_k\}\subset V$.
 In the following, it will be seen that the quality certainly happens for some suitable subset $\Upsilon$.
\begin{lemma}\label{lem: enh numb}
The enhanced number $\wp_k^{(X,w)}$ is the maximal number of $\dim\mathcal{V}_k$ when $\tilde w$ ranges through $w+\im X$ and
$k$-tuple $(\nu_1,\ldots, \nu_k)$ runs over $\overset{k}{\overbrace{V\times V\times\cdots \times V}}$.
\end{lemma}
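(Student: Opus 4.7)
By the $\uG$-invariance of both $\dim\mathcal{V}_k$ and $\wp_k^{(X,w)}$, I reduce to the standard form $(X,w)=(J_\lambda, u_j)$ with $u_j=v_{q+d_j}$ in the Jordan basis \eqref{eq: Jordan st basis 2}; the case $X=0$ is immediate. The plan is then to exhibit an explicit maximizer and prove the matching upper bound.

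For the maximizer I split on whether $k+1>q$ or $k+1\leq q$. In the former case (where $\delta^\lambda_{k+1,q}=a_{k+1}$), take $\tilde w=v_{q+d_j}$ together with Jordan generators $\nu_l=v_{i_l}$ where $\{i_1,\ldots,i_k\}=\{1,\ldots,k+1\}\setminus\{q+d_j\}$ if $q+d_j\leq k+1$ and $\{i_1,\ldots,i_k\}=\{1,\ldots,k\}$ otherwise; a direct check shows the cyclic pieces $\bk[J]\tilde w$ and $\bk[J]\nu_l$ are $\bk[J]$-independent in $V$ (using $a_{q+d_j}=a_{k+1}$ in the boundary range $q\leq k<q+d_j$), for total dimension $\sum_{i=1}^{k+1}a_i$. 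In the latter case ($\delta^\lambda_{k+1,q}=a_{k+1}-1$), take $\nu_l=v_{l+1}$ for $l=1,\ldots,k$ and $\tilde w=v_{q+d_j}+Jv_1$; a direct calculation using $b_j<a_1$ shows $\dim\bk[J]\tilde w=a_1-1$ with support inside $\bk[J]v_1+\bk[J]v_{q+d_j}$, disjoint from $\bigoplus_{l=1}^{k}\bk[J]v_{l+1}$ since $q+d_j>k+1$, giving total dimension $(a_1-1)+\sum_{l=1}^{k}a_{l+1}=\sum_{i=1}^{k+1}a_i-1$.

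For the upper bound I first invoke the structural estimate: as a $\bk[J]$-submodule of $V$ generated by $k+1$ elements, $\mathcal{V}_k$ has Jordan partition $\mu$ with at most $k+1$ parts satisfying $\mu\subset\lambda$ (equivalently $\mu^*_s\leq\lambda^*_s$, proved by the inclusion-induced injection $(\mathcal{V}_k\cap\ker J^s)/(\mathcal{V}_k\cap\ker J^{s-1})\hookrightarrow\ker J^s/\ker J^{s-1}$), so
\[
\dim\mathcal{V}_k=|\mu|\leq\sum_{s\geq 1}\min(k+1,\lambda^*_s)=\sum_{i=1}^{k+1}a_i.
\]
This matches $\wp_k^{(X,w)}$ when $k+1>q$. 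To sharpen it by one when $k+1\leq q$, the crucial input is that any $\tilde w\in v_{q+d_j}+\im J$ has nilpotency $\leq a_1-1$ in $V$, because $v_{q+d_j}$ has nilpotency $b_j\leq b_{j-1}-1\leq a_1-1$ (since $j\geq 2$ when $q\geq 1$) and every element of $\im J$ has nilpotency at most $a_1-1$. Hence $\tilde w\in\ker J^{a_1-1}$, so $\mathcal{V}_k/(\mathcal{V}_k\cap\ker J^{a_1-1})$ is generated by the images of $\nu_1,\ldots,\nu_k$ only, yielding $\mu^*_{a_1}\leq k$; when $d_1\geq k+1$ this gives a strict saving of one against $\min(k+1,d_1)=k+1$, and summing produces $\dim\mathcal{V}_k\leq\sum_{i=1}^{k+1}a_i-1$.

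The technical heart of the proof lies in the residual subcase $d_1\leq k<q$, where the argument above yields no saving at $s=a_1$. Here I would iterate the same principle at the level of the quotient $V/\mathcal{U}$ where $\mathcal{U}:=\bk[J]\{\nu_1,\ldots,\nu_k\}$, using $\dim\mathcal{V}_k=\dim\mathcal{U}+\dim\bk[J]\bar{\tilde w}$ in $V/\mathcal{U}$ and decomposing $\bar{\tilde w}=\bar v_{q+d_j}+\overline{Jy}$; since $b_j<a_{k+1}$ and the maximum nilpotency in $J(V/\mathcal{U})$ is at most (largest part of $V/\mathcal{U}$)$-1$, the nilpotency of $\bar{\tilde w}$ in $V/\mathcal{U}$ is bounded by one less than the largest part of $V/\mathcal{U}$. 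Combining this with a Littlewood--Richardson-type bound $\dim\mathcal{U}+(\text{largest part of }V/\mathcal{U})\leq\sum_{i=1}^{k+1}a_i$ (which follows from applying $\mu\subset\lambda$ to $\mathcal{U}$) gives the desired $\dim\mathcal{V}_k\leq\sum_{i=1}^{k+1}a_i-1$. Establishing this last combinatorial bound uniformly over all (not necessarily maximal) submodules $\mathcal{U}$ — by controlling the largest part of $V/\mathcal{U}$ against the partition of $\mathcal{U}$ — is the main obstacle.
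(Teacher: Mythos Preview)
Your argument is substantially more careful than the paper's own proof, which simply asserts the upper bound (``it is clear that $\dim\mathcal{V}_k\leq\sum_{i=1}^k a_i+(a_{k+1}-1)$'') without justification when $q\geq k+1$, and only verifies achievability. Your maximizers are correct (though you chose $\tilde w=v_{q+d_j}+Jv_1$ with $\nu_l=v_{l+1}$, while the paper takes $\tilde w=w+Jv_{k+1}$ with $\nu_i=v_i$; both work). Your structural upper bound $\dim\mathcal{V}_k\leq\sum_{i=1}^{k+1}a_i$ via $\mu^*\leq\lambda^*$ is clean and correct.

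The ``main obstacle'' you flag is not actually an obstacle: the bound $\dim\mathcal{U}+c_1(V/\mathcal{U})\leq\sum_{i=1}^{k+1}a_i$ has a one-line proof. Pick $v\in V$ with $J^{c_1-1}v\notin\mathcal{U}$; then $v,Jv,\ldots,J^{c_1-1}v$ are linearly independent modulo $\mathcal{U}$, so $\mathcal{U}+\bk[J]v$ is a $(k+1)$-generated $\bk[J]$-submodule of dimension at least $\dim\mathcal{U}+c_1$, whence your own structural bound gives $\dim\mathcal{U}+c_1\leq\sum_{i=1}^{k+1}a_i$.

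There is, however, a small slip in your nilpotency step: from $b_j<a_{k+1}$ and ``nilpotency in $J(V/\mathcal{U})$ is at most $c_1-1$'' you cannot conclude that the nilpotency of $\bar{\tilde w}$ is at most $c_1-1$, because nothing forces $b_j\leq c_1-1$. The fix is to observe instead that the nilpotency of $\bar{\tilde w}=\bar v_{q+d_j}+J\bar y$ is at most $\max(b_j,\,c_1-1)$, and then split: if the max is $b_j$ use $\dim\mathcal{U}\leq\sum_{i=1}^k a_i$ together with $b_j\leq a_{k+1}-1$; if the max is $c_1-1$ use the bound just proved. Either way $\dim\mathcal{V}_k=\dim\mathcal{U}+\dim\bk[J]\bar{\tilde w}\leq\sum_{i=1}^{k+1}a_i-1$. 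With these two small completions your proof is complete and in fact fills the gap the paper leaves.
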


\begin{proof} If $X=0$, the statement is clear. Suppose $X\ne0$, and $(X,w)$ is of type $\lambda[q]$. As before we write $q=d_0+d_1+\cdots+d_{j-1}$.
Note that $\wp_k^{(X,w)}$ is only dependent on the $\uG$-conjugacy class of $(X,w)$. So we might as well suppose $(X,w)$ to meet \eqref{eq: Jordan st basis 1} and  \eqref{eq: Jordan st basis 2}, specially suppose $w=v_{q+d_{j}}=v_{d_1+\cdots+d_{j}}$.
 By \eqref{eq: Jordan st basis 1} it is clear that $\dim\mathcal{V}_k\leq \sum_{i=1}^{k+1}a_{i}$.  If $q< k+1$, by definition $a_q>a_{k+1}$. In this case, $\wp_k^{(X,w)}=\sum_{i=1}^{k+1}a_i$.
 Hence, $a_{q+d_k}\geq a_{k+1}$. Without loss of generality, we might as well suppose  $q+d_k\leq k+1$. Then take $\nu_i$ running through $\{v_1,\ldots, v_{q+d_j-1}, v_{q+d_j+1},\cdots, v_{k+1}\}$, and take $\tilde w=w$. Then $\dim\mathcal{V}_k= \sum_{i=1}^{k+1}a_i$.

 Suppose $q\geq k+1$. Then $a_{q+d_j}<a_q\leq a_{k+1}$. Correspondingly $\dim k[X]w=a_{q+d_j}<a_{k+1}$. It is clear that the dimension of $\calv_k=\bk[X]\tilde w+\bk[X](\nu_1,\ldots, \cdots, \nu_k)$ is not bigger than $\sum_{i=1}^k a_i+(a_{k+1}-1)$. On the other hand, when we take $\nu_i=v_i$ for $i=1,\ldots,k$, and take $\tilde w$ to be $w+X.v_{k+1}$. Then $\bk[X](\tilde w, v_1,\ldots, v_k)$ attains the maximal dimension $\sum_{i=1}^k a_i+(a_{k+1}-1)$ which is exactly $\wp_k^{(X,w)}$.

The proof is completed.
\end{proof}

\subsection{Explanation by lowerings of a partition $\lambda\in \scrp_n$} \label{sec: lowerings}
The classification of $\uG$-orbits in $\ucaln$ is parameterized by enhanced partitions. The latter can be reformulated by partitions and their lowerings.
\subsubsection{Lowerings of a partition}\label{subsec: lowerings} For a given  partition $\lambda=(a_1,\ldots,a_t)\in\scrp_n$, call $\mu$ a lowering of $\lambda$ if there exists $q\in\bbn$ such that $\mu=\lambda-(1)_q:=(a_1-1,\cdots,a_q-1,a_{q+1},\ldots,a_t)$ is still a partition of $n-q$ where $(1)_q:=(1^q)=(1,\ldots,1)$, a $q$-tuple with all entries equal to $1$ (we make an appointment $(1)_0=0$). For example, for $\lambda=(b_1^{d_1}\ldots b_r^{d_r})$ as in \eqref{new lam express}, $\lambda-(1)_{d_1}=((b_1-1)^{d_1}b_2^{d_2}\ldots b_r^{d_r})$ is a lowering of $\lambda$. It is easily known that when $j$ runs through $\{1,\ldots,r, r+1\}$,
 \begin{align*}
 \lambda-(1)_{d_0+d_1+d_2+\cdots+d_{j-1}}=\begin{cases} \lambda,  &\text{ if }j=1,\cr
 ((b_1-1)^{d_1}\ldots (b_{j-1}-1)^{d_{j-1}} b_{j}^{d_{j}}\ldots b_r^{d_r}), &\text{ otherwise}
 \end{cases}
 \end{align*}
provide all lowerings of $\lambda$ (we make an appointment $d_0=0$). Recall $\lambda[q]$ is  already defined before. Now we endow it with a new meaning by identifying $\lambda[q]$ with $(\lambda, \lambda-(1)_q)$ the pair of a partition and its given lowering $\lambda-(1)_q$.  Denote by $\mathcal{L}_\lambda$ the set of all lowerings of $\lambda$. And let
$\callp_n$ denote the set of all lowerings of partitions of $n$, which means
\begin{align*}
\callp_n&=\bigsqcup_{\lambda\in\scrp_n}\mathcal{L}_\lambda\cr
&=\{ (\lambda, \lambda-(1)_{q})\mid \lambda=(b_1^{d_1}\ldots b_r^{d_r})\in\scrp_n, q\in\{\sum_{i=0}^{j-1}d_i\mid j=1,\ldots,r+1\}\}.
\end{align*}
There is a one-to-one correspondence between $\scrpe$ and $\callp_n$.

\subsubsection{The lowering corresponding to the orbit $\textsf{Ad}(\uG)(X,w)$}\label{rem: by partitions}
%
%
  For a given $(X, w)\in \ucaln$ with $X=0$, then its corresponding partition is $(1^n)$. So its lowerings are either $(1^n)$ itself, or $(0)$. The former corresponds to the $\uG$-orbit of $0$ when $w=0$. The latter corresponds to the $\uG$-orbit $\{0\}\times (V\backslash \{0\})$ consisting of   nonzero vectors.

Now suppose $X (\ne 0)$ has the Jordan standard matrix $J=J_\lambda$ with $\lambda=(a_1\geq a_2\geq\cdots a_t>0)$ reformulated as in \eqref{new lam express}. By Proposition \ref{thm: classi}, $(X,w)$ is a unique  $\textsf{Ad}(\uG)(J, u_j)$, $j=0,1,\ldots,r$. This $(J,u_j)$ corresponds to the lowering $\lambda-(1)_q$ of $\lambda$ with $q$ as  in \eqref{eq: q}.

\subsection{A partial order of enhanced partitions}
Recall there is a typical partial order in the set $\scrp_n$ of partitions of $n$, which says for any two given $\lambda=(\lambda_1\geq \lambda_2\geq\cdots), \mu=(\mu_1\geq\mu_2\geq\cdots)\in \scrp_n$, $\mu\leq\lambda$ if and only if  $\sum_{i=1}^k\mu_i\leq \sum_{i=1}^k\lambda_i$ for all $k\in \bbz_{>0}$.

\subsubsection{} We turn back to  $\scrpe$.

\begin{defn} \label{defn:partial order}
For any given $\lambda[q], \mu[l]\in \scrpe$,
we indicate that $\mu[l]\preceq \lambda[q]$ if and only if the following two items satisfy
\begin{itemize}
\item[(1)] $\mu\leq \lambda$ in $\scrp_n$.
\item[(2)] For any  $k\geq 0$,
		$\wp_k^{\mu[l]}
		\leq \wp_k^{\lambda[q]}$.		
\end{itemize}
\end{defn}
\subsubsection{}\label{convering order} Now we are given $\lambda[q]$ with $\lambda=(a_1\geq a_1\geq\cdots)$. Suppose $\mu$ ($\leq \lambda$) is another partition of $n$ with $\mu=(c_1\geq c_2\geq\cdots)$. Let us investigate the possibility of $\mu[l]$ with $\mu[l]$ covered by $\lambda[q]$  for $\lambda\geq \mu$, which means $\mu[l]\preceq\lambda[q]$ while for any $\mu'[l']$ different from $\lambda[q]$,  $\mu[l]\preceq \mu'[l'] \preceq\lambda[q]$ yields $\mu[l]=\mu'[l']$. We denote this strong relation by $\mu[l]\lhd\lambda[q]$. Similarly, we define $\lambda\lhd\mu$. By a straightforward computation, $\mu[l]\lhd\lambda[q]$ implies $\mu\lhd\lambda$.

When $\lambda=\mu$, $\mu[l]\lhd\lambda[q]$ if and only if $l=q+d_j$. Now suppose $\mu\lhd \lambda$. The following facts can be checked by a direct computation.
\begin{itemize}
    \item[(1)] When $q\leq l$, then
     $\mu[l]\lhd\lambda[q]$ if and only if there exists $k\geq j+1$ such $l=d_1+\cdots+d_{k}-1$ and $$\mu=\lambda-\epsilon_{d_1+\cdots+d_k}+\epsilon_{d_1+\cdots+d_k+1}$$
   where $\epsilon_i$ is defined via an alternative expression $\sum_{i=1}^nc_i\epsilon_i$ of a partition $\mu=(c_1\geq c_2\geq\cdots)$.
      \item[(2)] When $q>l$, then $\mu[l]\lhd\lambda[q]$ if and only if $l=q-1$ for  $q=d_1+\cdots+d_{j-1}$,  with $d_{j}=1$ and
      $$\mu=\lambda-\epsilon_{d_1+\cdots+d_{j-1}}+\epsilon_{d_1+\cdots+d_{j}}.$$
      \end{itemize}


\section{Closures of enhanced nilpotent orbits}
In this section, we will investigate the closures of enhanced nilpotent orbits via establishing partial flags. Some classical strategies (see \cite{AH}, or \cite{BMac2}, \cite{dCP}, \cite{Spa}, {\sl etc.}) are exploited here. Keep the notations and assumptions as in the previous section.

\subsection{Enhanced flag varieties and description of closures of enhanced nilpotent orbits}\label{subsec: on the basis} For a given nilpotent element $(X,w)\in \ucaln$, by the arguments in the previous section we can assume $(X,w)$ lies in an enhanced orbit represented by $\lambda[q]\in \scrpe$.
If $X=0$ and $w=0$, then $(X,w)=(0,0)$ is $\uG$-invariant, and the closure of the corresponding nilpotent orbit is just the single point. When $X=0$ while $w\ne 0$, then {the closure of} the $\uG$-conjugacy class of $(X,w)$ coincides with $V$.

In the following, suppose $X\ne 0$. Suppose $X$ has the Jordan standard matrix $J_\lambda$ with $\lambda=(a_1\ldots a_t)=(b_1^{d_1}\ldots b_r^{d_r})$ as \eqref{new lam express}. Simply denote $J=J_\lambda$. So $(X,w)$ can be represented by $(J, u_j)$ in the enhanced orbit $\co_{\lambda[q]}$ with $j\in\{0,1,\ldots,r\}$  (recall that correspondingly $u_0=0$ and $u_j=v_{d_1+\cdots+d_j}$ while $q=0$ or $q=d_1+\cdots+d_{j-1}$).

\subsubsection{A canonical partial flag associated with $\lambda[q]$} Keep the notations as in \eqref{eq: Jordan st basis 1}, \eqref{eq: Jordan st basis 2} and Lemma \ref{lem: quotient orb}. In particular, keep in mind that $q=d_0+d_1+d_2+\cdots+d_{j-1}$, and $t=d_0+d_1+\cdots+d_r$.
Associated with $\lambda[q]$, let us define a canonical partial flag  over $V$
denoted by $\calf^0$ corresponding to the basis structure in \eqref{eq: Jordan st basis 2}, which is defined via $\calf^0=(F^0_i)_{i=0,1,\cdots,a_1}$ with $F^0_{a_1}=V$, and
\begin{align}\label{eq: canonical flag}
&F^0_{a_1-1}=\im X+\sum_{k=q+1}^{t}\bk v_{k},\cr
&F^0_{i}=\sum_{k=1}^q \bk X^{a_1-i}v_k+\sum_{k=q+1}^t\bk { X^{a_1-k-1}v_k,}\;  i=1,\cdots, a_1-2;\cr
&F^{0}_0:=0.
\end{align}
Clearly, $\dim F^0_{a_1-1}=\dim\im(X)+(t-q)=n-q$, and
$$\dim F^0_{i}=\dim \im(X^{a_1-i})=n-q-\sum_{k=i+1}^{a_1-1} ((\lambda-(1)_q)^\sft)_{a_1-k}$$ for $i=1,\cdots, a_1-2$,
 where $\lambda-(1)_q$ still denotes the lowering of $\lambda$, and $(-)^\sft$ denotes the transpose of a partition $(-)$. We simply set $m_i:=\dim F^0_{i}$,  then denote $n_i:=m_i-m_{i-1}$. Clearly $n_i=((\lambda-(1)_q)^\sft)_{a_1-i}$ for $i=1,\cdots,a_1-1$,
  and $n_{a_1}=q$. Here and further, keep in mind the meaning of notation $\mu_i$ which stands for  the $i$th component of a partition $\mu$. This is to say, $\mu=(\mu_1,\mu_2,\ldots)$.

\begin{lemma}\label{lem: trans of parts} The following statements hold.
\begin{itemize}
\item[(1)] The subspace $F^0_{a_1-1}$ coincides with $M$ in Lemma \ref{lem: about M}.
\item[(2)] The set $\{n_i\mid i=1,\ldots,a_1\}$ coincides with $\{\lambda^\sft_i\mid i=1,\ldots,a_1\}$.
    \end{itemize}
\end{lemma}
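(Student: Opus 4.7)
The plan is to verify both parts by unfolding the definitions and performing a direct combinatorial computation, treating (1) first (a straightforward unfolding) and then (2) (a multiset identity in the transpose partition).

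For (1), I would combine Lemma 3.4 with Lemma 3.3(2). Since both sides of the claimed equality are $\uG$-invariant, I may assume $X = J_\lambda$ and $w = u_j = v_{d_1+\cdots+d_j}$, the standard representative of the orbit of type $\lambda[q]$ supplied by Proposition 3.5. Lemma 3.3(2) then expresses $\ggg_X \cdot w = \sum_{k > q}\sum_{0 \leq i \leq a_k - 1} \bk X^i v_k$; the terms with $i \geq 1$ already lie in $\im X$, so the only new contribution beyond $\im X$ is $\sum_{k > q} \bk v_k$. Therefore $M = \im X + \ggg_X \cdot w = \im X + \sum_{k = q+1}^t \bk v_k$, which matches $F^0_{a_1-1}$ by its definition in (4.1).

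For (2), I would compute the jumps $n_i = \dim F^0_i - \dim F^0_{i-1}$ using the dimension formula stated right after (4.1). Writing $\mu = \lambda - (1)_q$, the formula $\dim F^0_i = n - q - \sum_{j=1}^{a_1-i-1} \mu^\sft_j$ (together with $\dim F^0_0 = 0$, $\dim F^0_{a_1-1} = n - q$, and $\dim F^0_{a_1} = n$) yields, by consecutive differences, that $n_i$ agrees with $\mu^\sft_{a_1-i}$ on the interior indices, with boundary corrections at $i = 1$ and $i = a_1 - 1$, and $n_{a_1} = q$. The multiset identity then reduces to the combinatorial fact $\mu^\sft_\ell = \lambda^\sft_\ell - \#\{k \leq q : a_k = \ell\}$, which in the Young-diagram picture merely records that passing from $\lambda$ to $\mu$ deletes one cell from each of the first $q$ rows, producing a predictable redistribution of column lengths in the transpose.

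The main obstacle is the multiset bookkeeping in (2). One has to verify that after rearrangement the sequence $(n_1, \ldots, n_{a_1})$ coincides with $(\lambda^\sft_1, \ldots, \lambda^\sft_{a_1})$, with the extra term $n_{a_1} = q = d_0 + d_1 + \cdots + d_{j-1}$ exactly absorbing the mass that is \emph{missing} from $\mu^\sft$ relative to $\lambda^\sft$. The boundary indices and the degenerate situation $q = 0$ (where $F^0_{a_1-1}$ already equals $V$) require small separate checks; these reduce to elementary verifications using the explicit description of $\mu^\sft$ and the block structure $\lambda = (b_1^{d_1} \cdots b_r^{d_r})$, with the key input being that the total number of cells is preserved by the lowering and that only the affected columns shift, in a controlled way, between the two partitions.
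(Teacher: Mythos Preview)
Your approach is essentially the same as the paper's. For (1) the paper simply cites the proof of Lemma~\ref{lem: about M} (where $M$ is computed as $\im X+\sum_{k>q}\bk v_k$) and compares dimensions; you compute both sides directly and observe they agree, which is equivalent. For (2) the paper's proof is the single sentence ``readily known from the construction of the filtration $\{F^0_i\}$'', whereas you actually carry out the multiset bookkeeping via the lowering $\mu=\lambda-(1)_q$; your version is strictly more detailed but follows the same direct-computation route.

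One small correction in (1): you assert that ``both sides of the claimed equality are $\uG$-invariant'' to justify reducing to the standard representative. Neither $F^0_{a_1-1}$ nor $M$ is $\uG$-invariant---they are specific subspaces of $V$ attached to the fixed Jordan basis of $X$. No reduction is needed anyway: the setup of \S\ref{subsec: on the basis} already fixes $(X,w)$ in standard form, so your subsequent computation via Lemma~\ref{lem: quotient orb}(2) goes through verbatim once that sentence is removed.
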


\begin{proof} (1) By the proof of Lemma \ref{lem: about M}, $F^0_{a_1-1}\subset M$. The statement follows from the comparison of the dimensions.

(2) It is readily known from the construction of the filtration $\{F^0_{i}\}$.
\end{proof}

\subsubsection{Enhanced flag varieties}
Next we can define a partial flag variety $\mathscr{F}_{\lambda[q]}$
  on $V$ consisting of subspace filtrations maintaining the same dimensions as in the flag
  \eqref{eq: canonical flag}. This is to say:
  \begin{align*}
\mathscr{F}_{\lambda[q]}=\{(M_i)
:=(0=:M_{0}\subset M_{1}\subset\ldots\subset M_{a_1-1}\subset M_{a_1}:= V)\mid
 \dim M_{i}=m_i\}.
\end{align*}
It is clearly {both $G$-stable and $\uG$-stable.}

Then we introduce a so-called associated flag variety
\begin{align}
\scraf_{\lambda[q]}=\{(Y, (M_i))\in \caln\times \scrf_{\lambda[q]}
\mid (M_i)\in\scrf_{\lambda[q]}, \;Y(M_i)\subset M_{i-1} \}
\end{align}
and  so-called enhanced flag variety
\begin{align}
\scref_{\lambda[q]}=\{((Y,u), M_i)\in \ucaln\times \scrf_{\lambda[q]}\mid u\in M_{a_1-1}, \;{(Y, (M_i))}\in \scraf_{\lambda[q]} \}.
\end{align}

{

\begin{remark} When $\lambda=(n)$ which corresponds to the regular nilpotent orbit, in the meanwhile $q=0$, then $\scrf_{\lambda[q]}$ is actually a full flag variety over $V$, $\scrf_{\lambda[q]}$ is actually isomorphic to the cotangent bundle of $\scraf_{\lambda[q]}$. In this case, $\scrf_{\lambda[q]}$, $\scraf_{\lambda[q]}$ and $\scref_{\lambda[q]}$ will be simply write $\scrf$, $\scraf$ and $\scref$, respectively.
 \end{remark}

}

We first have the following observation.
Turn back to the standard partial flag $\mathcal{F}^0$. Clearly, the $G$-saturation $G.\calf^0$ is exactly  the whole $\scrf_{\lambda[q]}$. Let $G_{\calf^0}$ denote the stabilizers, i.e. $G_{\calf^0}=\{g\in G\mid g(\calf^0)=\calf^0\}$. Then $G_{\calf^0}$ is certainly a parabolic subgroup of $G$.
Denote by $U^0$ the unipotent radical of $G_{\calf^0}$.
By construction we have an isomorphism
$$ G_{\calf^0}\slash U^0\cong \prod_{i=1}^{a_1}\GL_{n_i}.$$

Furthermore, we have the following fact.

\begin{lemma}\label{lem: parabolic lem}
The stabilizer $G_{\calf^0}$ is isomorphic to P as defined in \S\ref{subsec: enh conj notations}.
\end{lemma}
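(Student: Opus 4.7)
The goal is to show $G_{\calf^0} = P$ as subgroups of $G = \GL(V)$, which in particular gives the stated isomorphism. I plan three main steps.

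First, I would invoke the standard identification of the parabolic $P_\tau$ attached to the cocharacter $\tau$: setting $V_{\geq m} := \bigoplus_{j \geq m} V_j$, one has
$$P = \{g \in G : g(V_{\geq m}) \subseteq V_{\geq m} \text{ for all } m \in \bbz\}$$
(see \cite[\S8.4]{Sp0}, as already cited in \S\ref{subsec: enh conj notations}). This reduces the lemma to comparing the flag $\calf^0$ with the $\tau$-weight filtration $\{V_{\geq m}\}_{m \in \bbz}$.

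Second, using the grading $\gr(X^k v_i) = -(a_i - 1) + 2k$ together with the explicit formulas for $F^0_i$ in \eqref{eq: canonical flag}, I would verify that each $F^0_i$ is a $\tau$-stable subspace of $V$, i.e.\ a direct sum of weight spaces $V_m$. Every basis vector $X^j v_k$ occurring in the description of $F^0_i$ is $\tau$-homogeneous, so $F^0_i$ automatically decomposes as $\bigoplus_m (F^0_i \cap V_m)$; from the specific choice of exponents $a_1 - i$ and $a_1 - i - 1$ (split according to whether $k \leq q$ or $k > q$) one reads off the set of $\tau$-weights appearing. Combining this description across consecutive $i$ and using Lemma \ref{lem: quotient orb}, one checks that $\calf^0$ refines (or coincides with) the weight filtration $\{V_{\geq m}\}$ in the sense that $P$ preserves each $F^0_i$, giving $P \subseteq G_{\calf^0}$.

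Third, I would establish the reverse inclusion by a dimension count. The quotient $G_{\calf^0}/U^0 \cong \prod_{i=1}^{a_1} \GL_{n_i}$ (noted just before the statement) together with the jumps $n_i = ((\lambda-(1)_q)^\sft)_{a_1 - i}$ for $i < a_1$ and $n_{a_1} = q$ determines $\dim G_{\calf^0}$. On the other hand, $\dim P = \sum_{k \geq 0} \dim \gl_n^{(k)} = \sum_{k \geq 0} \sum_m (\dim V_m)(\dim V_{m+k})$, which is computed directly from the weights of the Jordan basis \eqref{eq: Jordan st basis 2}. A bijective matching between the multiset $\{n_i\}$ and the multiset $\{\dim V_m\}$ (this matching is exactly Lemma \ref{lem: trans of parts}(2), once $q$ is taken into account via the enhanced data) then yields $\dim G_{\calf^0} = \dim P$. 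Since $P$ and $G_{\calf^0}$ are both connected parabolics, $P \subseteq G_{\calf^0}$ together with equality of dimensions forces $G_{\calf^0} = P$.

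The main obstacle is the middle step: one has to track the $\tau$-weights of the vectors $X^{a_1 - i} v_k$ (for $k \leq q$) and $X^{a_1 - i - 1} v_k$ (for $k > q$) and verify that, as $i$ varies, these weights fill out precisely the weight layers of $V$ in the order compatible with $\{V_{\geq m}\}$. The asymmetric role of the index $q$ (reflecting the enhancement by $w$) is what makes the bookkeeping delicate, but once the weight content of each $F^0_i$ is identified, the remaining verifications are standard facts about parabolic subgroups of $\GL_n$.
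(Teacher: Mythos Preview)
The paper's proof is a single sentence and takes a different route from yours: it simply observes that $G_{\calf^0}$, being the stabiliser of a partial flag, is a parabolic subgroup, and that its Levi quotient $\prod_i \GL_{n_i}$ has (by the display just before the lemma together with Lemma~\ref{lem: trans of parts}(2)) the same multiset of block sizes as the Levi of $P$; two parabolics in $\GL_n$ with the same Levi type are conjugate, hence isomorphic. No inclusion between $P$ and $G_{\calf^0}$ is ever claimed or needed. Your Step~3 already contains the substance of this argument, so Steps~1--2 are extra work aimed at the stronger conclusion $G_{\calf^0}=P$.

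That stronger conclusion, and specifically your Step~2, has a genuine gap. You correctly note that each $F^0_i$ is spanned by $\tau$-homogeneous Jordan-basis vectors and is therefore $\tau$-stable, but $\tau$-stability is strictly weaker than $P$-stability. Since the Levi of $P$ contains the full block $\GL(V_m)$ for every weight $m$, a $\tau$-stable subspace $W$ is $P$-stable only when $W\cap V_m\in\{0,V_m\}$ for all $m$, i.e.\ $W$ must be a sum of \emph{full} weight layers. The recipe \eqref{eq: canonical flag} selects individual vectors $X^{a_1-i}v_k$ (for $k\le q$) and $X^{a_1-i-1}v_k$ (for $k>q$), and two such vectors can share a $\tau$-weight while entering $\calf^0$ at different steps: already for $\lambda=(3,1)$ and $q=0$ one has $\gr(Xv_1)=\gr(v_2)=0$, yet the formulas do not place $Xv_1$ and $v_2$ in the same $F^0_i$. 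Then the element of $\GL(V_0)\subset P$ swapping $Xv_1$ and $v_2$ destroys that step of $\calf^0$, so $P\not\subseteq G_{\calf^0}$. Thus the equality you aim for does not hold in general, and one must rely on the conjugacy/Levi-type argument instead.
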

\begin{proof} By definition, $G_{\calf^0}$ is a parabolic subgroup of $G$ with the same quotient by the unipotent radical as $P$. The statement follows.
\end{proof}

So, we have
$$\dim\scrf_{\lambda[q]}=\dim G-\dim G_{\calf^0}{=}{1\over 2}(\dim G-\dim(G_{\calf^0}\slash U^0)),$$
consequently
$$\dim\scrf_{\lambda[q]}={1\over 2}(n^2-\sum_{i=1}^{a_1} n_i^2)\overset{Lemma \ref{lem: trans of parts}(2)}{=}{1\over 2}(n^2-\sum_{i=1}^{a_1}(\lambda^\sft)_i^2).$$

Consider the projection $\scraf_{\lambda[q]}\rightarrow \scrf_{\lambda[q]}$, which is a vector bundle.  Its fiber at $\calf^0$ is just $\Lie(U^0)$. This vector bundle is actually the fiber bundle $G \times^{G_{\calf^0}} \Lie(U^0)$ (see \cite[\S5.14]{Jan1} for the notation). Correspondingly we have
\begin{align}\label{eq: flag dim a}
\dim \scraf_{\lambda[q]}=2\dim U^0=n^2-\sum_{i=1}^{a_1}(\lambda^\sft)_i^2=\dim \co_\lambda.
\end{align}
Similarly, the projection $\scref_{\lambda[q]}\rightarrow \scraf_{\lambda[q]}$ is also a vector bundle. Furthermore, its fiber at $(X, \calf^0)$ is just the vector subspace of {$\dim M_{a_1-1}$} equal to $n-q$. Hence
\begin{align}\label{eq: flag dim e}
\dim \scref_{\lambda[q]}=n^2-\sum_{i=1}^{a_1} n_i^2 {+n-q}.
\end{align}

\begin{lemma}\label{lem: two proj prop} The following statements hold
\begin{itemize}
\item[(1)] Both $\scraf_{\lambda[q]}$ and $\scref_{\lambda[q]}$ are $\uG$-equivariant.
\item[(2)]
Both $\scraf_{\lambda[q]}$ and $\scref_{\lambda[q]}$ are smooth irreducible varieties.
\end{itemize}
\end{lemma}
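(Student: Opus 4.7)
The plan is first to produce explicit $\uG$-actions on the two flag-type varieties and verify that the defining incidences are preserved, and then to deduce smoothness and irreducibility from the vector-bundle descriptions already established in the paragraphs immediately preceding the statement.

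For part (1), I would start with the partial flag variety $\scrf_{\lambda[q]}$, on which $G=\GL(V)$ acts transitively (it is the variety of subspace filtrations of the fixed dimension type $(m_i)$), yielding $\scrf_{\lambda[q]}\cong G/G_{\calf^0}$. Since $V$ is normal in $\uG$ with $\uG/V\cong G$, this $G$-action extends to a $\uG$-action on $\scrf_{\lambda[q]}$ with $e^V$ acting trivially, and the same prescription promotes the $G$-action $g\cdot(Y,(M_i)):=(\Ad(g)Y,(gM_i))$ on $\scraf_{\lambda[q]}$ to a $\uG$-action; the incidence $Y(M_i)\subset M_{i-1}$ is evidently preserved since $\Ad(g)Y\cdot gM_i=gYM_i\subset gM_{i-1}$. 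For $\scref_{\lambda[q]}$ I would use the full adjoint formula (\ref{Ad structure}), setting
\[
(g,v)\cdot\bigl((Y,u),(M_i)\bigr):=\bigl(\Ad(g,v)(Y,u),\,(gM_i)\bigr)=\bigl((\Ad(g)Y,\,gu-(\Ad(g)Y)v),\,(gM_i)\bigr).
\]
The single point to verify is that the new vector component lies in $gM_{a_1-1}$: indeed $gu\in gM_{a_1-1}$ since $u\in M_{a_1-1}$ by hypothesis, while $(\Ad(g)Y)v=gY(g^{-1}v)\in g\cdot\im Y\subset gM_{a_1-1}$, because the defining condition of $\scraf_{\lambda[q]}$ at $i=a_1$ forces $\im Y=Y(V)\subset M_{a_1-1}$. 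The natural projections $\scref_{\lambda[q]}\to\scraf_{\lambda[q]}\to\scrf_{\lambda[q]}$ and $\scref_{\lambda[q]}\to\ucaln$ are $\uG$-equivariant by construction.

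For part (2), I would invoke the bundle structure already made explicit in the paragraphs preceding the statement. The partial flag variety $\scrf_{\lambda[q]}\cong G/G_{\calf^0}$ is smooth, projective and irreducible as a homogeneous space for the connected group $G$. The projection $\scraf_{\lambda[q]}\to\scrf_{\lambda[q]}$ realizes $\scraf_{\lambda[q]}$ as the associated vector bundle $G\times^{G_{\calf^0}}\Lie(U^0)$, and a vector bundle over a smooth irreducible base is itself smooth and irreducible (with the expected dimension \eqref{eq: flag dim a}). Likewise, the projection $\scref_{\lambda[q]}\to\scraf_{\lambda[q]}$ is a vector bundle whose fiber over $(Y,(M_i))$ is the $(n-q)$-dimensional subspace $M_{a_1-1}$, so smoothness and irreducibility of $\scref_{\lambda[q]}$ follow from those of $\scraf_{\lambda[q]}$.

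I do not anticipate a serious obstacle. The entire content of (1) is the single well-definedness check for the $\uG$-action on $\scref_{\lambda[q]}$, and this is precisely what the incidence condition $Y(V)\subset M_{a_1-1}$ was built to accommodate; part (2) is then a formality, relying only on the well-known smoothness and projectivity of $G/P$ together with the fact that vector bundles inherit smoothness and irreducibility from their base.
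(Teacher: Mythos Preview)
Your proposal is correct and follows essentially the same approach as the paper: the paper dismisses (1) as ``clear'' and says (2) ``follows from the arguments before the lemma,'' which are precisely the vector-bundle descriptions you invoke. Your explicit verification of the well-definedness of the $\uG$-action on $\scref_{\lambda[q]}$ (namely that $gu-(\Ad(g)Y)v\in gM_{a_1-1}$ because $\im Y\subset M_{a_1-1}$) is exactly the content the paper leaves implicit in the word ``clear.''
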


 \begin{proof}
 The first statement is clear. As to the second one, it follows from the arguments before the lemma.
\end{proof}

Consider the following two projections  on the first coordinators respectively:
$$\pia: \scraf_{\lambda[q]}\rightarrow  \caln$$
and
\begin{align}\label{eq: enh proj}
\pie:\scref_{\lambda[q]}\rightarrow \ucaln.
\end{align}

\begin{lemma}\label{lem: flag closure} Keep the notation as before. In particular, let $(X,w)\in \ucaln$ be of type $\lambda[q]$.
 The following statements hold.
\begin{itemize}
\item[(1)] Both $\pia$ and $\pie$ are proper.

\item[(2)] The image of $\pia$ is exactly $\overline{\co_{\lambda}}$, the closure of the $G$-orbit $\co_{\lambda}$ of $X$ in $\caln$.

\item[(3)] The image of $\pie$ is exactly $\overline{\co_{\lambda[q]}}$, whose dimension is equal to $\dim \overline{\co_\lambda}+(n-q)$.

    \end{itemize}
\end{lemma}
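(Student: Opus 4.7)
The plan is to handle the three parts together by leveraging the closed-embedding, projective-fiber setup that the authors have already arranged. For part (1), I would note that $\scraf_{\lambda[q]}$ and $\scref_{\lambda[q]}$ are by definition closed subvarieties of $\caln\times \scrf_{\lambda[q]}$ and $\ucaln\times \scrf_{\lambda[q]}$, and that $\pias$ and $\pies$ factor as these closed embeddings followed by the first-coordinate projections. Since the partial flag variety $\scrf_{\lambda[q]}$ is projective, the first-coordinate projections are proper, hence so are their restrictions to closed subvarieties. This yields (1).

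For part (2), properness makes $\im(\pia)$ closed in $\caln$, and it is $G$-stable since $\pia$ is $G$-equivariant. To get $\co_\lambda\subset \im(\pia)$, I would verify that for $X=J_\lambda$ the standard flag $\calf^0$ built in \eqref{eq: canonical flag} satisfies $X(F^0_i)\subset F^0_{i-1}$ for every $i$, which is immediate from the construction of $\calf^0$ out of a Jordan basis associated with $X$. Closedness and $G$-stability then give $\overline{\co_\lambda}\subset \im(\pia)$. For the reverse containment I would argue by dimension: by Lemma~\ref{lem: two proj prop}(2) $\scraf_{\lambda[q]}$ is irreducible, so $\im(\pia)$ is an irreducible closed subset of $\caln$, and by \eqref{eq: flag dim a} one has $\dim\scraf_{\lambda[q]}=\dim \co_\lambda=\dim\overline{\co_\lambda}$. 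An irreducible closed subset of $\caln$ containing $\overline{\co_\lambda}$ and of dimension at most $\dim\overline{\co_\lambda}$ must equal $\overline{\co_\lambda}$.

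Part (3) is analogous, with $\uG$ in place of $G$. Closedness of $\im(\pie)$ comes from properness and $\uG$-stability is built into the definition. For the standard representative $(X,w)=(J_\lambda,u_j)$ of $\co_{\lambda[q]}$ I would reuse the flag $\calf^0$ from part (2), so that $X(F^0_i)\subset F^0_{i-1}$ automatically, and then check $u_j\in F^0_{a_1-1}$: this follows because $u_j=v_{q+d_j}$ with $q+d_j>q$ sits inside the second summand of the description of $F^0_{a_1-1}$ in \eqref{eq: canonical flag} (and the case $u_0=0$ is trivial). Hence $\co_{\lambda[q]}\subset \im(\pie)$, and therefore $\overline{\co_{\lambda[q]}}\subset \im(\pie)$. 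For the reverse inclusion, I would compute the dimension using the vector bundle $\scref_{\lambda[q]}\to\scraf_{\lambda[q]}$ with fibre $M_{a_1-1}$ of dimension $n-q$, obtaining
\begin{equation*}
\dim\scref_{\lambda[q]}=\dim\scraf_{\lambda[q]}+(n-q)=\dim\co_\lambda+(n-q),
\end{equation*}
which by Proposition~\ref{prop: enh orbit dim} coincides with $\dim\co_{\lambda[q]}=\dim\overline{\co_{\lambda[q]}}$. Since $\scref_{\lambda[q]}$ is irreducible by Lemma~\ref{lem: two proj prop}(2), $\im(\pie)$ is an irreducible closed subset of $\ucaln$ of dimension at most $\dim\overline{\co_{\lambda[q]}}$ that contains $\overline{\co_{\lambda[q]}}$, forcing equality and giving the stated dimension.

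The only delicate point I anticipate is the dimension bookkeeping in part (3): one must compose the vector bundle fibre count $(n-q)$ with the earlier identity $\dim\scraf_{\lambda[q]}=\dim\co_\lambda$ and match the total against $\dim\co_{\lambda[q]}=\dim\co_\lambda+(n-q)$ from Proposition~\ref{prop: enh orbit dim}. Once this matching is secured, the closed-image and irreducibility arguments are routine and deliver the result uniformly.
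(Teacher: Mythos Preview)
Your proposal is correct and follows essentially the same approach as the paper: properness from the projective partial flag variety, irreducibility of the image via Lemma~\ref{lem: two proj prop}, containment of the relevant orbit via the canonical flag $\calf^0$, and then a dimension comparison using \eqref{eq: flag dim a} and Proposition~\ref{prop: enh orbit dim}. The only minor difference is that for the reverse inclusions the paper invokes finiteness of $G$- (resp.\ $\uG$-) orbits to conclude that the image is an orbit closure before matching dimensions, whereas you bypass this by directly comparing two irreducible closed sets of equal dimension; both arguments are valid and amount to the same thing.
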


\begin{proof} (1) Note that $\scrf_{\lambda[q]}$ is a projective variety. The partial flag varieties $\scraf_{\lambda[q]}$ and $\scref_{\lambda[q]}$ are close subvarieties of { $\caln\times \scrf_{\lambda[q]} $ and $\ucaln\times \scrf_{\lambda[q]}$} respectively. The first statement follows.

(2) 
From \eqref{eq: flag dim a} we have
\begin{align}\label{eq: flag dim eq}
\dim\scraf_{\lambda[q]}=\dim\overline{\co_{\lambda}}.
\end{align}

Note that $\pia$ is $G$-equivariant. By Lemma \ref{lem: two proj prop}  $\im\pia$ is an irreducible closed $G$-variety in $\caln$ which contains finite $G$-orbits. So $\im\pia$ must be the closure of some nilpotent orbit in $\caln$ under adjoint $G$-action. In the meanwhile, $\co_{\lambda}$ is clearly contained in $\im\pia$. Hence $\dim\im\pia\geq \dim\overline{\co_\lambda}$. So the statement follows from  \eqref{eq: flag dim eq}.

(3) We  take the same arguments as in (2). By \eqref{eq: flag dim e} and the second statement (2), we have $\dim \scref_{\lambda[q]}=\dim\overline{\co_\lambda}+(n-q)$. Taking Proposition \ref{prop: enh orbit dim} into an account, we further have $\dim\scref_{\lambda[q]}=\dim\overline{\co_{\lambda[q]}}$. Now $\pie$ is $\uG$-equivariant, and $\im\pie$ is a $\uG$-equivariant irreducible closed subset of $\ucaln$. By Theorem \ref{thm: classi}, $\ucaln$ contains finite $\uG$-orbits. Hence $\im\pie$ must be a closure of some $\uG$-orbit which contain $\overline{\co_{\lambda[q]}}$. Hence $\im\pie$ coincides with $\overline{\co_{\lambda[q]}}$.
\end{proof}

\begin{corollary}\label{cor: closure M}
Keep the notations and assumptions as before. In particular, $M=\im X+\ggg_X.w$ as in Lemma \ref{lem: about M}. Then the following statements hold
\begin{itemize}
\item[(1)] The closure $\overline{\co_{\lambda[q]}}$ coincides with the $\uG$-saturation $\textsf{Ad}(\uG)(\Lie(U^0)\times M)$.
\item[(2)] The projection $\pie$ is a resolution of singularities of $\overline{\co_{\lambda[q]}}$.
    \end{itemize}
\end{corollary}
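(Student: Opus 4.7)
The plan is to deduce both assertions from the proper surjection $\pie:\scref_{\lambda[q]}\twoheadrightarrow\overline{\co_{\lambda[q]}}$ established in Lemma \ref{lem: flag closure}, by analyzing the fiber over the base flag $\calf^0\in\scrf_{\lambda[q]}$. For (1), I would compute that fiber directly from the defining incidence conditions of $\scref_{\lambda[q]}$: it equals $\{(Y,u)\in\ucaln\mid Y(F^0_i)\subset F^0_{i-1}\ \forall i,\ u\in F^0_{a_1-1}\}$. Lemma \ref{lem: parabolic lem} identifies the $Y$-part with $\Lie(U^0)$, while Lemma \ref{lem: trans of parts}(1) identifies $F^0_{a_1-1}$ with $M$, so the fiber is $\Lie(U^0)\times M$. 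Since $\scrf_{\lambda[q]}=G\cdot\calf^0$, it follows that $\im\pie=\Ad(G)(\Lie(U^0)\times M)$; combined with the fact that $\im\pie$ is $\uG$-stable by $\uG$-equivariance of $\pie$, this sandwiches $\Ad(\uG)(\Lie(U^0)\times M)$ between $\Ad(G)(\Lie(U^0)\times M)$ and $\im\pie$, forcing equality and giving (1). A pleasant sanity check is that $\Lie(U^0)\times M$ is already $e^V$-stable: for $Y\in\Lie(U^0)$ one has $\im Y\subset F^0_{a_1-1}=M$, so $(Y,u)\mapsto(Y,u-Yv)$ preserves the set and the two saturations coincide trivially.

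For (2), the standing ingredients are smoothness and irreducibility of $\scref_{\lambda[q]}$ from Lemma \ref{lem: two proj prop}(2), properness of $\pie$ from Lemma \ref{lem: flag closure}(1), and surjectivity onto $\overline{\co_{\lambda[q]}}$ from Lemma \ref{lem: flag closure}(3). Combining \eqref{eq: flag dim a}, the $(n-q)$-rank vector bundle structure $\scref_{\lambda[q]}\to\scraf_{\lambda[q]}$, Proposition \ref{prop: enh orbit dim}, and Lemma \ref{lem: trans of parts}(2) yields $\dim\scref_{\lambda[q]}=\dim\co_\lambda+(n-q)=\dim\overline{\co_{\lambda[q]}}$, so the proper surjection $\pie$ is automatically generically finite. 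The only remaining ingredient is to upgrade this to birationality, and this is the hard part of the plan.

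For the birationality, I would pin down $\pie^{-1}(J_\lambda,u_j)$ at the canonical representative of $\co_{\lambda[q]}$ (with $q=d_0+\cdots+d_{j-1}$) and argue it equals $\{\calf^0\}$. Given any $(M_i)$ in this fiber, iterating $J_\lambda(M_i)\subset M_{i-1}$ together with $u_j\in M_{a_1-1}$ forces $M_i\supset\im J_\lambda^{a_1-i}+\sum_{s\ge a_1-1-i}\bk\,J_\lambda^{s}u_j$. The main obstacle is the dimension bookkeeping showing that this minimal subspace already has dimension $m_i$ for each $i$, whence $M_i=F^0_i$ is pinned down. One has to trace carefully how the lowering $\lambda-(1)_q$ interacts with $\lambda^\sft$ to produce the prescribed jumps $n_i=((\lambda-(1)_q)^\sft)_{a_1-i}$, and how the cyclic contribution $\bk[J_\lambda]u_j$ supplies exactly one new basis vector relative to $\im J_\lambda^{a_1-i}$ at each appropriate level. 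Lemma \ref{lem: enh numb}, reinterpreted as a maximality statement for $\bk[J_\lambda]$-modules generated by perturbations of $u_j$ with auxiliary vectors, is the natural instrument for this count; once the matching is verified, the connectedness of the stabilizer $\uG_{(J_\lambda,u_j)}$ from Lemma \ref{lem: basic lem}(2) excludes any residual scheme-theoretic multiplicity in the fiber, and $\pie$ is confirmed to be a resolution of singularities.
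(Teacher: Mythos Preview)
Your argument for (1) is correct and matches the paper's: both identify the fiber of $\scref_{\lambda[q]}\to\scrf_{\lambda[q]}$ over $\calf^0$ as $\Lie(U^0)\times M$ (via Lemma~\ref{lem: parabolic lem} and Lemma~\ref{lem: trans of parts}(1)) and then use $G$-transitivity on $\scrf_{\lambda[q]}$.

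For (2), however, your proposed fiber computation has a genuine gap. The lower bound you write down,
\[
M_i\ \supset\ \im J_\lambda^{\,a_1-i}+\sum_{s\ge a_1-1-i}\bk\,J_\lambda^{\,s}u_j,
\]
does \emph{not} in general attain dimension $m_i$, so it cannot pin down $M_i$ on its own. Take $\lambda=(3,2,1)$ and $q=1$ (so $u_j=v_2$); then $(m_1,m_2)=(2,5)$, but at $i=2$ your subspace is $\im J_\lambda+\bk[J_\lambda]v_2=\bk Xv_1+\bk X^2v_1+\bk Xv_2+\bk v_2$, which has dimension $4$, missing $v_3$. The fiber \emph{is} still a single point in this example, but one sees it only by simultaneously using the upper constraint $M_i\subset J_\lambda^{-1}(M_{i-1})$ and checking that the candidates for $M_{i-1}$ not contained in $\im J_\lambda$ fail to admit an $M_i$ of the correct dimension. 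Your proposal does not invoke this upward constraint, and Lemma~\ref{lem: enh numb} does not supply it either: the enhanced numbers govern maximal $\bk[X]$-modules generated by $k$ auxiliary vectors together with a perturbation of $w$, which is a different optimization problem from bounding a single flag. The appeal to connectedness of $\uG_{(J_\lambda,u_j)}$ at the end is also insufficient; connectedness of the stabilizer tells you that each of its orbits on a finite fiber is a point, but not that there is only one orbit, and it says nothing about scheme structure.

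The paper avoids this computation entirely. It first proves birationality for $\pia$ over $\co_\lambda$ using Richardson's orbit property: $\co_\lambda\cap\Lie(U^0)$ is a single dense $G_{\calf^0}$-orbit and $G_{J_\lambda}\subset G_{\calf^0}$, so for $X\in\co_\lambda$ the flag $\calf$ with $(X,\calf)\in\scraf_{\lambda[q]}$ is unique and depends morphically on $X$ via $g G_{J_\lambda}\mapsto g\calf^0$. Birationality of $\pie$ over $\co_{\lambda[q]}$ then follows immediately, since the extra condition $w\in M_{a_1-1}$ is automatic once the flag is determined by $X$. If you want to make your direct approach work, you would need an inductive two-sided squeeze showing $M_{i-1}=J_\lambda(M_i)$ and $M_i=J_\lambda^{-1}(M_{i-1})$ at each step, which essentially reproves the Richardson containment $G_{J_\lambda}\subset G_{\calf^0}$ by hand.
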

\begin{proof} (1) For any $(Y,u)\in \Lie (U^0)\times M$, $((Y,u), \calf^0)$ is a partial flag in $\scref_{\lambda[q]}$. By Lemma \ref{lem: flag closure}(3), $(Y,u)\in \overline{\co_{\lambda[q]}}$. Hence $\textsf{Ad}(\uG)(\Lie (U^0)\times M)$ is contained by $\overline{\co_{\lambda[q]}}$.

Conversely, for any $(Y,u)\in \overline{\co_{\lambda[q]}}$, by Lemma \ref{lem: flag closure}(3) again there exits $\calf=(M_i)\in \scrf_{\lambda[q]}$ such that $((Y,u), \calf)\in \scref_{\lambda[q]}$.
Thus, there exists $g\in G$ such that $g. \calf=\calf^0$, correspondingly
$\textsf{Ad}(g)u\in F^0_{r-1}=M$ and  $\Ad(g)(Y)\in \Lie(U^0)$. Thus $\textsf{Ad}(g)(Y,u)\in \Lie(U^0)\times M$.

(2) By Lemma \ref{lem: two proj prop}(2) and Lemma \ref{lem: flag closure}(2), it suffices to show that the restriction of $\pie$ to $\pie^{-1}(\co_{\lambda_[q]})$ is an isomorphism onto $\co_{\lambda[q]}$.

 We first claim that the restriction of $\pia$ to $\pia^{-1}(\co_{\lambda})$ is an isomorphism onto ${\co_{\lambda}}$.
Recall that for a given $X\in \co_\lambda$, there is a cocharacter $\tau$ of $G$ such that $\tau$ gives rise to a gradation of $\ggg$ compatible with the gradation of $V$ arising from a Jordan basis associated with $X$  (see \S\ref{subsec: enh conj notations}, or \cite[\S5.3]{jan2}). Furthermore, one associates to $\tau$ a parabolic subgroup $P$, and its Lie algebra $\ppp$ whose unipotent radical is denoted by $\uuu$. By Richardson's orbit property for $\GL_n$ (see \cite[\S5]{Car} or \cite[\S4.9]{jan2}), $\co_\lambda$ is a unique nilpotent orbit satisfying that its intersection with $\uuu$ is dense in $\uuu$. Furthermore,  $\co_{\lambda}$ is exactly the $G$-saturation of this intersection, and $P$ is $G$-conjugate to $G_{\calf^0}$. Correspondingly, there is a unique partial flag  $\calf\in \scraf$  such that $(X, \calf)\in \pia^{-1}(X)$ is exactly the unique inverse image of $X$. This
$\calf$ is conjugate to $\calf^0$. Furthermore, the map $\co_{\lambda}\rightarrow \scraf$ with $X\mapsto \calf$ is a morphism. This is because  $\co_\lambda\cong G\slash G_{\calf^0}$, and the map $X=\textsf{Ad}(g)J_\lambda\mapsto \calf=g.\calf^0$ is actually a morphism of varieties from $G\slash G_{\calf^0}$ to $G\slash P$.

Next, we turn back to the part of $\pie$. From the above claim on $\pia$, it follows that for any $(X, w)\in \co_{\lambda[q]}$ there exits a unique triple $((X,w), \calf)\in \pie^{-1}(X,w)$, and furthermore the map $(X,w)\mapsto \calf$ is a morphism of varieties from $\co_{\lambda[q]}$ to $\scref$.

 The proof is completed.
\end{proof}

\begin{remark} \label{rem: simple notations}
(1) By the above proof, we actually have that the projection $\pia$ is a resolution of singularities of $\overline{\co_{\lambda}}$.

(2)  When $\lambda=(n)$ corresponds to the regular nilpotent orbit, and $q=0$, $\pie$ is actually a resolution of singularities of the whole $\ucaln$ associated with the enhanced reductive group $\uG$. In this case, we directly write it as $\pies$, and call it the enhanced Springer resolution. Similarly, we directly write $\pia$ as $\pias$. This $\pias$ is exactly the ordinary Springer resolution.

(3) In view of Proposition \ref{prop: comparison},  $\overline{\co_{\lambda[q]}}$ can be described by Achar-Henderson's way in \cite{AH}, where the partial flags are constructed via bi-partitions and  related divisions  of  Jordan basis.
\end{remark}

\subsection{An alternative description of the closures of enhanced nilpotent orbits}
Consider a  subvariety of $\ucaln$ consisting of $(Y, u)$ subject to the following condition for any $k\in\{1,2,\ldots, n\}$
\begin{align}\label{eq: closure cond}
\dim \bk[Y](\tilde u,\nu_1,\nu_2,\ldots,\nu_k)\leq \wp_k^{\lambda[q]}\;\text{ for any }\tilde u\in u+\im Y, \text{ and } \nu_1,\nu_2,\ldots,\nu_k\in V.
\end{align}
Then for each $k$, the above condition determines a closed subvariety of $\ucaln$. Hence we have a closed subvariety $\ucaln_{\lambda[q]}$ as the intersection of such closed subvarieties when  $k$ ranges over $\{1,\ldots,n\}$.

\begin{lemma}\label{lem: invar closed set} The closed subvariety  $\ucaln_{\lambda[q]}$ is $\uG$-invariant. Consequently,  $\ucaln_{\lambda[q]}\supset \overline{\co_{\lambda[q]}}$.
\end{lemma}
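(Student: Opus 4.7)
The plan is to verify the two assertions of the lemma separately: first the $\uG$-invariance, then deduce the containment from closedness (which was established just before the lemma statement) together with the fact that $\co_{\lambda[q]}$ itself satisfies the defining inequalities via Lemma \ref{lem: enh numb}.

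For $\uG$-invariance, I would fix $(g,v)\in\uG$ and $(Y,u)\in\ucaln_{\lambda[q]}$, and set $(Y',u'):=\Ad(g,v)(Y,u)=(gYg^{-1},\,gu-gYg^{-1}v)$ using formula \eqref{Ad structure}. The key observation is that $u'+\im Y'=g(u+\im Y)$; indeed, $\im Y'=g(\im Y)$ since $g$ is invertible, and $gYg^{-1}v\in\im Y'$, so the additive shift by $-gYg^{-1}v$ is absorbed. Consequently any $\tilde u'\in u'+\im Y'$ can be written as $g\tilde u$ for a unique $\tilde u\in u+\im Y$. Then, for arbitrary $\nu_1,\ldots,\nu_k\in V$, I set $\nu_i':=g^{-1}\nu_i$ and compute directly
\[
(Y')^j\tilde u'=gY^j\tilde u,\qquad (Y')^j\nu_i=gY^j\nu_i',
\]
so $\bk[Y'](\tilde u',\nu_1,\ldots,\nu_k)=g\cdot\bk[Y](\tilde u,\nu_1',\ldots,\nu_k')$. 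Because $g$ is a linear automorphism of $V$, the dimensions agree, and the hypothesis $(Y,u)\in\ucaln_{\lambda[q]}$ applied with the inputs $(\tilde u,\nu_1',\ldots,\nu_k')$ forces the same bound $\wp_k^{\lambda[q]}$ on the transformed module. Since $(\tilde u',\nu_1,\ldots,\nu_k)$ was arbitrary, $(Y',u')\in\ucaln_{\lambda[q]}$.

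For the containment $\overline{\co_{\lambda[q]}}\subset\ucaln_{\lambda[q]}$, I would first observe that $\co_{\lambda[q]}\subset\ucaln_{\lambda[q]}$: pick any representative $(X,w)\in\co_{\lambda[q]}$, and recall from Lemma \ref{lem: enh numb} that $\wp_k^{\lambda[q]}=\wp_k^{(X,w)}$ is precisely the \emph{maximum} of $\dim\bk[X](\tilde w,\nu_1,\ldots,\nu_k)$ as $\tilde w$ ranges over $w+\im X$ and $(\nu_1,\ldots,\nu_k)$ ranges over $V^{\times k}$. Hence every such span has dimension $\leq\wp_k^{\lambda[q]}$, which is exactly the defining condition \eqref{eq: closure cond} for membership in $\ucaln_{\lambda[q]}$. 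By the $\uG$-invariance just proved, the whole orbit $\co_{\lambda[q]}$ lies in $\ucaln_{\lambda[q]}$. Since $\ucaln_{\lambda[q]}$ is closed (established in the paragraph preceding the lemma as the intersection over $k\in\{1,\ldots,n\}$ of closed subvarieties), taking closures yields $\overline{\co_{\lambda[q]}}\subset\ucaln_{\lambda[q]}$.

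Neither step presents a serious obstacle. The only subtle point is the bookkeeping in the $\uG$-action verification: one has to notice that the translational part of the action (by $v\in V$) is absorbed into $\im Y'$, so that the $\uG$-action on the pair $(Y,u)$ effectively reduces, modulo $\im Y$, to the $G$-action conjugating $Y$ and translating $u$ by $g$. Once this is seen, the invariance of the dimension function follows mechanically from the linear intertwining identity $(gYg^{-1})^j=gY^jg^{-1}$.
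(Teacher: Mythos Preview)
Your proof is correct and follows essentially the same approach as the paper. The paper's own proof is extremely terse---it simply remarks that ``$V$ is invariant under $\uG$-action, hence $\ucaln_{\lambda[q]}$ is $\uG$-invariant,'' and then invokes Lemma~\ref{lem: enh numb} for the containment---whereas you have carefully unpacked the bookkeeping (in particular the identity $u'+\im Y'=g(u+\im Y)$ absorbing the translational part of the $\uG$-action) that makes this one-line justification work.
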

\begin{proof}  Note that $V$ is invariant under $\uG$-action. Hence  $\ucaln_{\lambda[q]}$ is $\uG$-invariant. The second part follows from Lemma \ref{lem: enh numb}.
\end{proof}

On the other hand, we have the following fact.
\begin{lemma}\label{cor: pre equiv} Suppose $(Y,u)\in\ucaln$ is of type $\mu[l]$. If $Y$ is already in $\Lie(U^0)$ and $\wp_k^{(Y,u)}\leq \wp_k^{(X,w)}$ for any $k\geq 0$, then $(Y,u)\in \overline{\co_{\lambda[q]}}$.
\end{lemma}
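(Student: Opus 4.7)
The plan is to show that $(Y,u)$ lies in the image of the projection $\pie\colon\scref_{\lambda[q]}\to\ucaln$. By Lemma \ref{lem: flag closure}(3) this image equals $\overline{\co_{\lambda[q]}}$, so it suffices to produce a partial flag $(M_\bullet)\in\scrf_{\lambda[q]}$ satisfying $YM_i\subset M_{i-1}$ for every $i$ together with $u\in M_{a_1-1}$. The hypothesis $Y\in\Lie(U^0)$ is exactly the statement that the canonical flag $\calf^0$ already satisfies $YF^0_i\subset F^0_{i-1}$, since $\Lie(U^0)$ is by construction the unipotent radical of the parabolic subalgebra stabilizing $\calf^0$. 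Thus only the placement of $u$ (or of a suitable $\uG$-adjusted representative) inside the penultimate piece of a flag of the prescribed dimensions needs to be arranged.

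First I would exploit the $\uG$-freedom afforded by the adjoint formula \eqref{Ad structure}: since $(Y,u)$ and $(Y,u+Yv)$ are $\uG$-conjugate for every $v\in V$, one may freely replace $u$ by any element $\tilde u\in u+\im Y$. I would then construct $(M_\bullet)$ inductively from the top, setting $M_{a_1}=V$ and at each stage choosing $M_{i-1}\supset YM_i$ of dimension $m_{i-1}$ containing the appropriate iterated $Y$-images of $\tilde u$ and of auxiliary vectors drawn from a Jordan basis for $Y$. The governing principle is Lemma \ref{lem: enh numb}: the maximal dimension of a $Y$-cyclic subspace generated by $\tilde u$ together with $k$ auxiliary vectors equals $\wp_k^{(Y,u)}$, which by hypothesis is bounded by $\wp_k^{(X,w)}$; the latter is in turn precisely the dimension, inside $\calf^0$, of the corresponding $X$-cyclic subspace generated by $w$ and the first $k$ Jordan generators in \eqref{eq: Jordan st basis 2} (compare Lemma \ref{lem: trans of parts}).

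The main obstacle is the combinatorial dimension matching that drives the above recursion. One has to verify that the enhanced-number inequalities, combined with $\mu\leq\lambda$ (which follows from $Y\in\Lie(U^0)$ by the Richardson density already invoked in the proof of Corollary \ref{cor: closure M}(2)), translate precisely into the dimension inequalities required to embed $\tilde u$ and its $Y$-iterates inside each layer $M_i$ of prescribed size $m_i$. Once this check is carried out, the triple $((Y,\tilde u),(M_\bullet))$ is a point of $\scref_{\lambda[q]}$ whose image under $\pie$ is $(Y,\tilde u)\in\uG.(Y,u)$, so $(Y,u)\in\overline{\co_{\lambda[q]}}$ as desired.
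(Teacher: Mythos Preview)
Your overall strategy is aligned with the paper's framework (both pass through Lemma~\ref{lem: flag closure}(3) and Corollary~\ref{cor: closure M}), but the paper takes a much shorter path than your inductive flag construction. Having observed that $Y\in\Lie(U^0)$ makes the canonical flag $\calf^0$ satisfy $YF^0_i\subset F^0_{i-1}$, the paper does \emph{not} build a new flag at all: by Corollary~\ref{cor: closure M}(1) one has $\overline{\co_{\lambda[q]}}=\Ad(\uG)(\Lie(U^0)\times M)$ with $M=F^0_{a_1-1}$, so the entire problem reduces to the single containment $u\in M$. The paper verifies this by expanding $u$ in the $X$-Jordan basis \eqref{eq: Jordan st basis 2}; the summand indexed by $i>q$ already lies in $\ggg_X.w\subset M$ by Lemma~\ref{lem: quotient orb}(2), and the enhanced-number inequalities (together with $Y\in\Lie(U^0)$ and the analysis of \S\ref{convering order}) force the residual leading coefficients $c_{i0}$ for $i\leq q$ to vanish, so what remains lies in $\im X\subset M$.

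Your detour through $\tilde u\in u+\im Y$ and a $Y$-adapted flag is both unnecessary and leaves the hard step undone. Because $Y\in\Lie(U^0)$ already gives $\im Y\subset F^0_{a_1-1}=M$, replacing $u$ by $u+Yv$ does not change membership in $M$; so this freedom is inert if one sticks with $\calf^0$. If instead you genuinely build a different flag, then the ``combinatorial dimension matching'' you yourself flag as the main obstacle is the whole content of the proof and you have not supplied it --- and in typical cases the constraints $YM_i\subset M_{i-1}$ together with the prescribed $m_i$ force $M_{a_1-1}$ back to $M$ anyway (for instance whenever $\dim\ker Y$ or $\dim\im Y$ saturates the relevant bound), so the recursion collapses to the paper's direct check. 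The paper's route sidesteps this recursion entirely by working in the fixed $X$-basis from the outset.
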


\begin{proof} For a given $(Y,u)\in\ucaln$ of type $\mu[l]$ with $Y\in\Lie(U^0)$ and with $\wp_k^{(Y,u)}\leq \wp_k^{(X,w)}$ ($k\geq 0$), in order to show $(Y,u)\in \overline{\co_{\lambda[q]}}$,
by Corollary \ref{cor: closure M} it suffices to show that $u\in M$.

    We can write $u=\sum_{i=1}^q \sum_{k=0}^{a_i-1} c_{ik}X^kv_i+\sum_{i=q+1}^{t}
    \sum_{k=0}^{a_k-1}h_{ik}X^kv_i$ where $c_{ik}, h_{ik}\in \bk$.
The second sum belongs to $\ggg_X.w$ (Lemma \ref{lem: quotient orb}). So $u$ can be rewritten as below:
$$ u=\sum_{i=1}^q \sum_{k=0}^{a_i-1} c_{ik}X^kv_i + u^*$$
for $u^*\in \ggg_X.w$. We claim $\sum_{i=1}^q \sum_{k=0}^{a_i-1} c_{ik}X^kv_i\in \im X$.
 Actually, consider the space
 $$\bk[Y](\tilde u, \nu_1,\nu_2,\ldots,\nu_k)$$ for $\tilde u\in u+\im Y$, $\nu_1,\ldots,\nu_k\in V$. Recall the assumption $Y\in \Lie(U^0)$ and $\wp_k^{(Y,u)}\leq \wp_k^{(X,w)}$ ($k\geq 0$). By the analysis in \S\ref{convering order}, all $c_{ik}$ must be $0$ for $k=0$ when $i=1,\ldots, q$. The proof is completed.
\end{proof}

Summing up, we have the following result.
\begin{prop}\label{prop: closure}
 $\overline{\co_{\lambda[q]}}=\ucaln_{\lambda[q]}\cap (\overline{\co_\lambda}\times V)$.
\end{prop}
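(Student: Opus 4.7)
My plan is to split the claim into two inclusions, of which the forward one $\overline{\co_{\lambda[q]}}\subseteq\ucaln_{\lambda[q]}\cap(\overline{\co_\lambda}\times V)$ is immediate from Lemma \ref{lem: invar closed set} together with Corollary \ref{CorBasicClosure}(1), and therefore needs no further work. The substantive content is the reverse inclusion, for which I would take an arbitrary $(Y,u)\in\ucaln_{\lambda[q]}\cap(\overline{\co_\lambda}\times V)$ and aim to verify the two hypotheses of Lemma \ref{cor: pre equiv}, namely (i) $Y\in\Lie(U^0)$ and (ii) $\wp_k^{(Y,u)}\leq\wp_k^{(X,w)}=\wp_k^{\lambda[q]}$ for all $k\geq 0$; once both are in place the lemma will deliver $(Y,u)\in\overline{\co_{\lambda[q]}}$.

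For condition (ii), Lemma \ref{lem: enh numb} identifies $\wp_k^{(Y,u)}$ precisely as the supremum of $\dim\bk[Y](\tilde u,\nu_1,\ldots,\nu_k)$ as $\tilde u$ ranges over $u+\im Y$ and $\nu_1,\ldots,\nu_k$ over $V$, and this is exactly the quantity constrained by the defining inequalities of $\ucaln_{\lambda[q]}$ in \eqref{eq: closure cond}. Hence $(Y,u)\in\ucaln_{\lambda[q]}$ translates verbatim into (ii). For condition (i), I use $Y\in\overline{\co_\lambda}$ together with Lemma \ref{lem: flag closure}(2), which identifies $\overline{\co_\lambda}$ with $\im\pia$; this produces a partial flag $\calf=(M_i)\in\scrf_{\lambda[q]}$ with $Y(M_i)\subseteq M_{i-1}$ for all $i$. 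Since $G$ acts transitively on $\scrf_{\lambda[q]}\cong G/G_{\calf^0}$ (Lemma \ref{lem: parabolic lem}), I can select $g\in G$ with $g.\calf=\calf^0$, and then $\Ad(g)Y\in\Lie(U^0)$ by the characterisation of the unipotent radical $\Lie(U^0)$ as the endomorphisms lowering the reference filtration $\calf^0$ by one step.

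The remaining technical point is that passing from $(Y,u)$ to its $\uG$-conjugate $\Ad((g,0))(Y,u)=(\Ad(g)Y,\rho(g)u)$ preserves both sides of the assertion: it preserves membership in $\ucaln_{\lambda[q]}$ because the conditions \eqref{eq: closure cond} involve only dimensions of subspaces of $V$ built functorially from $Y$ and $u$ by linear constructions, and it preserves membership in $\overline{\co_{\lambda[q]}}$ because this closure is $\uG$-stable. After the reduction, Lemma \ref{cor: pre equiv} applies and yields $(Y,u)\in\overline{\co_{\lambda[q]}}$, completing the argument.

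In my view the main conceptual obstacle has already been absorbed by earlier machinery: the delicate ingredient is Lemma \ref{cor: pre equiv}, which in turn leans on the enumeration of covering relations in \S\ref{convering order} to exclude components of $u$ lying outside $M=\im X+\ggg_X.w$. Granted that lemma and the resolution-type description of $\overline{\co_\lambda}$ via $\pia$ from Lemma \ref{lem: flag closure}(2), the present proposition becomes a short synthesis and no fresh technical difficulties should appear.
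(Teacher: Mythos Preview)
Your proposal is correct and follows essentially the same approach as the paper. The only cosmetic difference is that where the paper invokes Richardson's orbit property directly to write $\overline{\co_\lambda}=\textsf{Ad}(G)\Lie(U^0)$ and thereby conjugate $Y$ into $\Lie(U^0)$, you route through Lemma \ref{lem: flag closure}(2) and the transitivity of $G$ on $\scrf_{\lambda[q]}$ to reach the same conclusion; both arguments then finish by appealing to Lemma \ref{cor: pre equiv}.
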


\begin{proof} The  part ``$\subseteq$" follows from  Corollary \ref{CorBasicClosure} and Lemma \ref{lem: invar closed set}.

Next we prove the inverse inclusion. For any given $(Y,u)\in \ucaln_{\lambda[q]}\cap (\overline{\co_\lambda}\times V)$, we can suppose $(Y,u)$ is of type $\mu[l]$. By the assumption, $\co_{\mu}\subset \overline{\co_{\lambda}}$. Note that by Richardson's orbit property for $\GL_n$, $\overline{\co_{\lambda}}$ coincides with the closure of the $G$-saturation  $\textsf{Ad}(G)(\co_{\lambda}\cap \Lie(U^0))$, i.e. $\overline{\co_{\lambda}}=\textsf{Ad}(G)\Lie(U^0)$ (see \cite{Car}). Hence, up to $G$-conjugacy, we might as well suppose $Y\in\Lie(U^0)$. The remaining follows from Lemma \ref{cor: pre equiv}.
\end{proof}

\subsection{Topology relations of the closures of enhanced nilpotent orbits}
Now we can conclude the closure relation of enhanced nilpotent orbits as  in the ordinary case (see \cite[\S6.3]{CM}, \cite{jan2}). Suppose $(Y, u)\in \caln$ of type $\mu[l]$, then it is deduced  that $\mu[l]\preceq\lambda[q]$, which is compatible with  \cite[Theorem 6.3]{AH}. This means

\begin{prop}\label{prop: closure rel} Suppose $(Y,u)\in \ucaln$  is of type $\mu[l]$. Then $(Y,u)$ lies in $\overline{\co_{\lambda[q]}}$ if and only if $\mu[l]\preceq\lambda[q]$.
\end{prop}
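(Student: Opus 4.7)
The plan is to deduce the closure relation directly from Proposition \ref{prop: closure}, Lemma \ref{lem: enh numb}, and the classical Gerstenhaber--Hesselink description of $\GL_n$-orbit closures in $\caln$ by the dominance order. The crucial observation is that the two conditions defining $\mu[l]\preceq\lambda[q]$ in Definition \ref{defn:partial order} correspond precisely to the two factors appearing in $\overline{\co_{\lambda[q]}}=\ucaln_{\lambda[q]}\cap(\overline{\co_\lambda}\times V)$.

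First I would set up the key translation. The subvariety $\ucaln_{\lambda[q]}$ is cut out by the inequalities $\dim\bk[Y](\tilde u,\nu_1,\ldots,\nu_k)\leq\wp_k^{\lambda[q]}$ for every $\tilde u\in u+\im Y$ and every $\nu_1,\ldots,\nu_k\in V$. By Lemma \ref{lem: enh numb}, the supremum of the left-hand side over all such choices is exactly $\wp_k^{(Y,u)}=\wp_k^{\mu[l]}$. Hence $(Y,u)\in\ucaln_{\lambda[q]}$ if and only if $\wp_k^{\mu[l]}\leq\wp_k^{\lambda[q]}$ for every $k\geq 0$, which is item (2) of Definition \ref{defn:partial order}.

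For the forward direction, suppose $(Y,u)\in\overline{\co_{\lambda[q]}}$. By Proposition \ref{prop: closure} we have $Y\in\overline{\co_\lambda}$, and since $Y$ has Jordan type $\mu$, the classical fact (valid for $\GL_n$ in arbitrary characteristic) that orbit closures are governed by the dominance order yields $\mu\leq\lambda$, giving condition (1) of $\preceq$. Proposition \ref{prop: closure} also gives $(Y,u)\in\ucaln_{\lambda[q]}$, and the translation above gives condition (2). Thus $\mu[l]\preceq\lambda[q]$. For the converse, assume $\mu[l]\preceq\lambda[q]$. Condition (1) combined with Gerstenhaber--Hesselink gives $Y\in\overline{\co_\mu}\subseteq\overline{\co_\lambda}$, so $(Y,u)\in\overline{\co_\lambda}\times V$. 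Condition (2) together with the translation places $(Y,u)$ in $\ucaln_{\lambda[q]}$. A second appeal to Proposition \ref{prop: closure} then concludes $(Y,u)\in\overline{\co_{\lambda[q]}}$.

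No step is really hard here, but the only delicate point is correctly identifying the sup in the defining inequality of $\ucaln_{\lambda[q]}$ with $\wp_k^{(Y,u)}$; this is where Lemma \ref{lem: enh numb} enters in a nontrivial way and must be invoked with the right choice of $\tilde u$ and $\nu_i$ (as in that lemma's proof) to realize the maximum. Once this sup/max identification is in hand, the proof is a mechanical reassembly of Proposition \ref{prop: closure} together with the two equivalent translations of conditions (1) and (2) in Definition \ref{defn:partial order}.
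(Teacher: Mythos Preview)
Your proposal is correct and takes essentially the same approach as the paper: the paper's proof is the single line ``It is a corollary to Proposition \ref{prop: closure},'' and what you have written is precisely the unpacking of that corollary, translating the two factors $\ucaln_{\lambda[q]}$ and $\overline{\co_\lambda}\times V$ into conditions (2) and (1) of Definition \ref{defn:partial order} via Lemma \ref{lem: enh numb} and the classical dominance-order description of $\GL_n$-orbit closures, respectively.
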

\begin{proof} It is a corollary to Proposition \ref{prop: closure}.
\end{proof}

\subsection{Comparison with $\GL_n$-orbits in $\ucaln$}\label{sec: comparison}

The enhanced nilpotent cones are both adjoint $\uG$- and adjoint $G$-stable affine varieties. There are finite orbits in $\ucaln$ in both cases under adjoint $\uG$-action and adjoint $G$-action.
In general, for a given $(X,w)\in \ucaln$, two orbits $\textsf{Ad}(\uG) (X,w)$ and $\textsf{Ad}(G)(X,w)$ are not necessarily the same. For example, $\dim \textsf{Ad}(G)(X,0)=\dim \textsf{Ad}(G)X$, however $\dim \textsf{Ad}(\uG)(X,0)=\dim \textsf{Ad}(G)X +\dim\im X$
(Corollary \ref{CorBasicClosure}(3)). So $\textsf{Ad}(\uG)(X,0)\supsetneqq
\textsf{Ad}(G)(X,0)$ whenever $X\ne 0$.

It is a natural question what the relation between two types of nilpotent orbits is. For any given $(X,w)\in \ucaln$, set $\co_{(X,w)}=\textsf{Ad}(\uG)(X,w)$ and $\sfO_{(X,w)}=\textsf{Ad}(G)(X,w)$.
By the constructions of enhanced nilpotent orbits under adjoint $\uG$-action and adjoint $G$-action respectively, we have
$$\co_{(X,w)}=\sfO_{(X,w)}\cup\bigcup_{\overset{g\in G}{u\in \im(gXg^{-1})}}\sfO_{(gXg^{-1}, u+gw)}$$

Furthermore, we have the following precise description on the relation  between them.

\begin{prop}\label{prop: comparison} 
\begin{itemize}
\item[(1)] For any given $(X,w)\in \ucaln$ of type $\lambda[q]$ as in Proposition \ref{thm: classi}, set $\hat w=\sum_{i=1}^{n-q}w_i$ where $w_i$ ($i=1,\ldots, n-q$) is a canonical basis of $\im X+\ggg_X.w$ such that $\{w_i\mid i=1,\ldots, n-q\}=\{X^{j_i}v_i\mid i=1,\cdots,t; j_i=1,\ldots, a_i-1 \text{ for }i=1,\ldots,q; \text{ and }j_i=0,1,\ldots, a_i-1 \text{ for }i=q+1,\ldots,t\}$ (the notations as in \eqref{eq: Jordan st basis 2}),  then
    \begin{align}\label{eq: compatible tcl}
    \overline{\co_{(X,w)}}=\overline{\sfO_{(X,\hat w)}}.
    \end{align}
    Equivalent to say,
    $$\overline{\co_{\lambda[q]}}=\overline{\sfO_{(\lambda-(1)_q, (1)_q)}}$$
    where $(\lambda-(1)_q, (1)_q)$ is a bipartition of $n$, and $\sfO_{(\lambda-(1)_q, (1)q)}$ is termed  the $G$-orbit parameterized by  ${(\lambda-(1)_q, (1)_q)}$ in the classification of $G$-orbits  of $\ucaln$ in the sense of \cite[Proposition 2.8]{AH}.

    \item[(2)] With respect to the closures of nilpotent orbits in $\ucaln$, the classification of $\uG$-orbits in Theorem
        \ref{thm: classi} is compatible with the classification of $G$-orbits by bipartitions in \cite{AH}.
\end{itemize}
\end{prop}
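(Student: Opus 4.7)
The plan is to reduce both parts to a single density statement: $G_X\cdot\hat w$ is open dense in $M=\im X+\ggg_X\cdot w$. Once this is verified, since $\co_{(X,w)}$ is irreducible of dimension $\dim\co_\lambda+(n-q)$ by Proposition~\ref{prop: enh orbit dim} and the inclusion $\sfO_{(X,\hat w)}\subseteq\co_{(X,w)}$ will automatically be of the same dimension, equality of closures follows immediately. The Achar--Henderson bipartition identification then yields both displayed identities in part~(1), and part~(2) drops out by comparing closure orders.

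First, I would locate $(X,\hat w)$ inside $\co_{(X,w)}$. The formula $\Ad(e^v)(X,w)=(X,w-Xv)$ shows that $(X,w')$ lies in $\co_{(X,w)}$ if and only if $w'$ and $w$ are $G_X$-conjugate modulo $\im X$. From the explicit form of $\hat w$ in the basis~\eqref{eq: Jordan st basis 2}, one reads $\hat w\equiv\sum_{i>q}v_i\pmod{\im X}$, and this is $G_X$-conjugate to $u_j=v_{q+d_j}$ by the same $C\ltimes R$-manipulation as in the proof of Lemma~\ref{lem: quotient orb}(1). Hence $\sfO_{(X,\hat w)}\subseteq\co_{(X,w)}$. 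Since $\dim\sfO_{(X,\hat w)}=\dim(G/G_X)+\dim(G_X\cdot\hat w)=\dim\co_\lambda+\dim(G_X\cdot\hat w)$, the identity~\eqref{eq: compatible tcl} reduces to proving $\dim(G_X\cdot\hat w)=n-q=\dim M$.

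The hard step is this density statement. The containment $\ggg_X\cdot\hat w\subseteq M$ is automatic since $\ggg_X$ (the commutant of $X$) is an associative subalgebra preserving both $\im X$ and $\ggg_X\cdot w$. For the reverse, I would work with the standard basis $\{Y_{ab}^{(s)}\}$ of $\ggg_X$, where $Y_{ab}^{(s)}$ sends $X^kv_b$ to $X^{k+s}v_a$ and commutes with $X$, together with the fact that $\hat w$ is the sum of the complete basis $\{w_i\}$ of $M$. A direct computation shows $Y_{ab}^{(s)}\hat w$ is a consecutive sum $\sum_m e_{a,m}$ over a combinatorial range depending on $(a,b,s)$ and on whether the source block index satisfies $b\le q$ (contribution starting at $k=1$) or $b>q$ (contribution starting at $k=0$); taking differences between adjacent shifts $s$ and $s+1$ isolates each individual basis vector $w_i$, giving $\ggg_X\cdot\hat w=M$. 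The main obstacle is the combinatorial bookkeeping that combines the two cases, but the argument is routine once set up.

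With the density in hand, $\sfO_{(X,\hat w)}$ is open dense in $\co_{(X,w)}$, whence~\eqref{eq: compatible tcl}. The identification with $\sfO_{(\lambda-(1)_q,(1)_q)}$ is then verified by matching Achar--Henderson invariants: computing the Jordan type of $X$ on the relevant cyclic/quotient subspaces attached to $\hat w$ (in the sense of~\cite[Proposition 2.8]{AH}) recovers exactly the bipartition $(\lambda-(1)_q,(1)_q)$. For part~(2), the assignment $\lambda[q]\mapsto(\lambda-(1)_q,(1)_q)$ realizes $\scrpe$ as a distinguished subset of bipartitions of $n$, and part~(1) identifies the respective closures inside $\ucaln$; the compatibility of the closure order $\preceq$ from Proposition~\ref{prop: closure rel} with the Achar--Henderson closure order (\cite[Theorem 6.3]{AH}) restricted to the image then follows automatically, since both partial orders are induced by Zariski closure in the same variety $\ucaln$.
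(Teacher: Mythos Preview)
Your proposal is correct and follows the same overall architecture as the paper's proof: establish the inclusion $\sfO_{(X,\hat w)}\subseteq\co_{(X,w)}$, then argue that both orbits have the same dimension, so their closures (being irreducible) coincide. The one genuine difference is in how the dimension of $\sfO_{(X,\hat w)}$ is obtained. The paper simply identifies $\sfO_{(X,\hat w)}$ with the Achar--Henderson orbit $\sfO_{(\lambda-(1)_q,(1)_q)}$ and cites \cite[Proposition~2.8]{AH} for its dimension $\dim\co_\lambda+(n-q)$. You instead compute the dimension intrinsically via the equality $\ggg_X\cdot\hat w=M$, which yields $\dim(G_X\cdot\hat w)=n-q$ and hence $\dim\sfO_{(X,\hat w)}=\dim\co_\lambda+(n-q)$ by the factorization $\dim\sfO_{(X,\hat w)}=\dim(G/G_X)+\dim(G_X\cdot\hat w)$. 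Your route is more self-contained (it does not borrow the dimension count from \cite{AH}), at the cost of the combinatorial verification that $\ggg_X\cdot\hat w=M$; the paper's route is shorter but relies on the external reference. Part~(2) is handled identically in both: it is an immediate consequence of part~(1).
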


\begin{proof} (2) is a consequence of (1).

For (1), we will verify \eqref{eq: compatible tcl}. By definition along with the same arguments as in the proof of Lemma \ref{lem: quotient orb},  $(X,\im X+\ggg_X.w)$ is contained in $\textsf{Ad}(G_X\ltimes V)(X,w)$. So the right hand side of \eqref{eq: compatible tcl} is contained by the left hand side. It is readily known that $\sfO_{(X, \hat w)}$ is exactly $\sfO_{(\lambda-(1)_q,(1)_q)}$ by the classification in \cite[Proposition 2.8]{AH}.
Furthermore, it is deduced by the same reason {\sl{ibid}} that dimension of $\sfO_{(\lambda-(1)_q, (1)q)}$ is equal to $\dim\co_{\lambda}+(n-q)$, which equals the dimension of $\co_{\lambda[q]}$ (see Proposition \ref{prop: enh orbit dim}).
Hence, both  sides are irreducible closed $G$-stable subset of $\ucaln$ with the same dimension. So \eqref{eq: compatible tcl} follows.
\end{proof}


\section{Intersection cohomology associated with enhanced nilpotent orbits and enhanced Springer fibers}

From now on, we will fix a prime number $\ell$ which is different from $p$ if $\text{ch}(\bk)=p>0$. Let $\overline{\bbq_\ell}$ be the algebraic closure of $\ell$-adic number field.
Recall that in the ordinary case, the study of Springer fibers (such as $\cb_X:=\pias^{-1}(X)$ in type $A$) plays a very important role in combinatorics and geometric {representation} theory. Especially, there is a so-called Springer representation of the Weyl group $W$ on $H^i(\cb_X,\overline{\bbq_\ell})$. This kind of representations were first constructed by Springer (see \cite{Spr}, \cite{BMac}, \cite{Lu} {\sl etc.} or the survey articles  \cite{Sho}, \cite{jan2}).  Enhanced Springer fibers naturally arise here.

In this section, we focus on the intersection cohomology decomposition on $\ucaln$ along the closures of enhanced nilpotent orbits, along with enhanced Springer fibers.
Keep the notations and assumptions as in the previous section. In particular, let $(X,w)\in \ucaln$ be a given enhanced nilpotent element of type $\lambda[q]$. We denote  $\cb_{(X,w)}=\pies^{-1}(X,w)$, which is called the enhanced Springer fiber of $(X,w)\in \ucaln$.


\subsection{Enhanced Springer fibers}  Keep the notations as before. In particular, keep the conventions as in Remark \ref{rem: simple notations}(2).
Let $(X,w)\in\ucaln$ be arbitrarily given and of type $\lambda[q]\in\scrpe$.
We first look at the enhanced Springer fiber $\cb_{(X,w)}=\pies^{-1}(X,w)$. By definition, we have $\pies^{-1}(X,w)=\pias^{-1}(X)$.  Hence we can identify $\cb_{X,w}$ with $\cb_X$ where $\cb_X=\pias^{-1}(X)$ is the ordinary Springer fiber. So we have the facts

\begin{lemma} \label{thm: 4.3} The following statements hold.
\begin{itemize}
\item[(1)]  An enhanced Springer fibers $\cb_{(X,w)}$ with $(X,w)\in \ucaln$ coincides with the corresponding Springer fiber $\cb_X$. Hence all $\cb_{(X,w)}$ are connected. Its irreducible components have the same dimension.
\item[(2)] $\dim \cb_{(X,w)}={1\over 2}(n^2-n-\dim\co_X)$.
\item[(3)] $H^i(\cb_{(X,w)},\overline{\bbq_\ell})=0$ for all $(X,w)\in \ucaln$ if $i$ is odd.
\end{itemize}
\end{lemma}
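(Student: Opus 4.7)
The plan is to reduce all three statements to classical facts about ordinary type-$A$ Springer fibers, with the identification $\cb_{(X,w)} = \cb_X$ as the linchpin.

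For part (1), I would unwind the definition of $\pies: \scref \to \ucaln$ in the regular case $\lambda = (n)$, $q = 0$. By Remark 3.17(2), $\pias$ is then the ordinary Springer resolution, and $\scref$ consists of triples $((Y,u),(M_i))$ with $(Y,(M_i)) \in \scraf$ and $u \in M_{a_1-1}$. The crucial observation is that in this regular specialization Lemma 3.5(1) identifies $F^0_{a_1-1}$ with $M = \im X + \ggg_X.w$, and when $w$ is taken to be the cyclic-vector representative of type $(n)[0]$ this equals $V$. Hence the defining dimension $m_{a_1-1}$ of $\scrf_{\lambda[q]}$ equals $n$ and every point of $\scrf_{\lambda[q]}$ has $M_{a_1-1} = V$, so the constraint $u \in M_{a_1-1}$ is vacuous. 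Therefore $\pies^{-1}(X,w) = \pias^{-1}(X) = \cb_X$ independently of $w$. Connectedness and equidimensionality of irreducible components of $\cb_X$ are then classical theorems of Spaltenstein.

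For part (2), with $\cb_{(X,w)} = \cb_X$ in hand, the desired formula is the standard type-$A$ Springer-fiber dimension formula
\[
\dim \cb_X \;=\; \tfrac12\bigl(\dim \caln - \dim \co_X\bigr) \;=\; \tfrac12\bigl(n^2 - n - \dim \co_X\bigr).
\]
I would derive it either from Steinberg's relation $\dim \ggg_X = n + 2\dim \cb_X$ combined with $\dim \co_X = n^2 - \dim \ggg_X$, or equivalently by noting that the Springer resolution $\pias$ is birational with $\dim \scraf = n^2 - n = \dim \caln$, so that the fiber bundle $\pias^{-1}(\co_X) \to \co_X$ with fiber $\cb_X$ has dimension equal to $\dim \scraf$ precisely when $\pias$ is semismall, as is the case in type $A$.

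For part (3), the vanishing of odd cohomology follows from Spaltenstein's paving theorem: the type-$A$ Springer fiber $\cb_X$ admits a cell decomposition by affine spaces indexed by standard Young tableaux of shape $\lambda$. A standard long-exact-sequence argument along the filtration by closed unions of these cells then yields $H^{2k+1}(\cb_X, \overline{\bbq_\ell}) = 0$ for all $k$. The main obstacle, more bookkeeping than conceptual, lies in part (1): one must verify carefully that in the regular case $m_{a_1-1} = n$ so that $\pies^{-1}(X,w)$ is intrinsically independent of $w$. Once this identification is secured, the remaining parts amount to invoking well-established results from classical Springer theory (cf.\ Spaltenstein, Hotta--Springer, or the survey by Shoji).
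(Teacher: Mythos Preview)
Your proposal is correct and follows essentially the same approach as the paper: identify $\cb_{(X,w)}$ with $\cb_X$ by unwinding the definitions of $\pies$ and $\pias$, then cite classical Springer-fiber results (Spaltenstein's connectedness and equidimensionality, the standard dimension formula, and the affine paving for odd-cohomology vanishing). The paper simply asserts ``by definition'' for the identification, whereas you spell out the key point that $m_{a_1-1}=n$ in the regular case $\lambda=(n),\ q=0$, making the constraint $u\in M_{a_1-1}$ vacuous; otherwise the arguments coincide.
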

\begin{proof} The first part of Statement (1) is easily observed from the definitions of $\pies$ and $\pias$. The other parts of (1) along with the second and third statements  follow from the classical results on Springer fiber (see \cite{Spa}, or \cite[\S10, \S11.1 and \S11.5]{jan2}.
 \end{proof}

\begin{remark} There is a conjecture by Achar-Henderson-Jones in \cite[1.2]{AHJ} which  thanks to  \eqref{eq: compatible tcl}, implies  that  all closures of  enhanced nilpotent orbits are normal varieties. With a connection to this conjecture, we have the following proposal.

\begin{conj} Suppose $(X,w)\in \ucaln$ is of type $\lambda[q]$. Then for any  $(Y,u)\in \overline{\co_{(X,w)}}$, the fiber $\pie^{-1}{(Y,u)}$  is connected. All of the irreducible components of this fiber have the same dimension, analogous of Spaltenstein's theorem on the Springer fibers (\cite{Spa}).
\end{conj}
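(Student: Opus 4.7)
The plan combines the vector-bundle structure of $\scref_{\lambda[q]}$ over $\scraf_{\lambda[q]}$ with a Spaltenstein-style cocharacter argument, using the comparison in Proposition \ref{prop: comparison} to import features from the mirabolic setting of \cite{AH}. First I would exploit the forgetful map $p:\scref_{\lambda[q]}\to\scraf_{\lambda[q]}$, $((Y,u), M_\bullet)\mapsto (Y, M_\bullet)$, which is a rank-$(n-q)$ vector bundle whose fiber at $(Y,M_\bullet)$ is the subspace $M_{a_1-1}\subset V$. For a given $(Y,u)\in\overline{\co_{\lambda[q]}}$, the enhanced fiber $\pie^{-1}(Y,u)$ then identifies with the closed subvariety
$$\{\, M_\bullet\in \pia^{-1}(Y)\mid u\in M_{a_1-1}\,\}$$
of the classical parabolic Springer fiber $\pia^{-1}(Y)$. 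Since $\pia$ is itself a resolution of singularities of $\overline{\co_\lambda}$ (Remark \ref{rem: simple notations}(1)), the parabolic form of Spaltenstein's theorem already gives connectedness and equidimensionality for $\pia^{-1}(Y)$, and the content of the conjecture reduces to showing that the linear condition $u\in M_{a_1-1}$ preserves both properties.

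For equidimensionality I would adapt Spaltenstein's Jacobson--Morozov cocharacter argument to the enhanced setting. Because $\uG_{(Y,u)}$ is connected by Lemma \ref{lem: basic lem}, one can produce a one-parameter subgroup $\tau:\Gm\to \uG$ that rescales $Y$ and $u$ by fixed powers of $t$ while fixing $(Y,u)$ up to these rescalings. Such a $\tau$ preserves both defining conditions of $\scref_{\lambda[q]}$ and thus acts on the projective variety $\pie^{-1}(Y,u)$, producing a Bia\l ynicki--Birula paving by affine cells indexed by $\tau$-fixed flags. A weight-by-weight count in the tangent spaces at fixed flags, compared against the expected dimension $\dim\scref_{\lambda[q]}-\dim\co_{\mu[l]}$ for $(Y,u)$ of type $\mu[l]$ (Proposition \ref{prop: enh orbit dim}), should yield equidimensionality. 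For connectedness I would appeal to Proposition \ref{prop: comparison} to identify $\pie$ with Achar--Henderson's resolution of $\overline{\sfO_{(\lambda-(1)_q,(1)_q)}}$; combined with the birationality of $\pie$ (Corollary \ref{cor: closure M}(2)) and properness, a Stein-factorization argument over the normal locus of $\overline{\co_{\lambda[q]}}$, extended by specialization along curves, should propagate the trivial connectedness of the generic singleton fiber to every fiber.

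The main obstacle will be the combinatorial bookkeeping in the Bia\l ynicki--Birula dimension count. In the classical Spaltenstein picture the $\tau$-fixed flags are enumerated by standard tableaux controlled by the Jordan type of $Y$; here the extra datum $u$ couples the flag structure to a distinguished vector in $V$, and the enumeration should be governed by the covering relations $\mu[l]\lhd\lambda[q]$ recorded in \S\ref{convering order}. The technical heart of a proof is to verify that the codimension contribution forced by the constraint $u\in M_{a_1-1}$ is constant across every BB cell of $\pia^{-1}(Y)$, so that each cell of $\pie^{-1}(Y,u)$ attains the common dimension predicted by Proposition \ref{prop: enh orbit dim}. A further complication is the current lack of a proved normality for $\overline{\co_{\lambda[q]}}$ (the open Achar--Henderson--Jones conjecture), which may force the connectedness step to proceed through a partial-resolution detour rather than a direct Zariski main-theorem argument.
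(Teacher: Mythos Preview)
The statement you are attempting to prove is labelled \emph{Conjecture} in the paper and is presented as an open problem; the paper offers no proof whatsoever. It appears inside a remark immediately after Lemma \ref{thm: 4.3}, where the authors explicitly frame it as a ``proposal'' connected to the (also open) Achar--Henderson--Jones normality conjecture. So there is no paper proof to compare your approach against.

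That said, a few comments on your plan as an attack on the open problem. Your identification of $\pie^{-1}(Y,u)$ with $\{M_\bullet\in\pia^{-1}(Y)\mid u\in M_{a_1-1}\}$ is correct and is the natural starting point. However, your connectedness argument has a genuine circularity: you propose to use Stein factorization over the normal locus and then specialize, but you also note that normality of $\overline{\co_{\lambda[q]}}$ is itself conjectural. Without normality, Zariski's main theorem gives you nothing, and ``specialization along curves'' does not propagate connectedness of fibers in general for proper maps to non-normal targets. For equidimensionality, the Bia\l ynicki--Birula strategy is plausible in spirit, but the claim that the codimension imposed by $u\in M_{a_1-1}$ is constant across every cell of $\pia^{-1}(Y)$ is precisely the hard combinatorial assertion, not a routine check; you have correctly flagged this as the obstacle, but the plan does not indicate how to overcome it. In short, your outline identifies the right structures but does not resolve either of the two genuine difficulties, which is consistent with the statement remaining a conjecture.
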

\end{remark}





\subsection{Enhanced Springer resolution $\pies$ and associated intersection cohomology decomposition} Let us first turn back to the resolutions $\pie$ of singularities of enhanced nilpotent orbit closures (Corollary \ref{cor: closure M}).  Especially, $\pies$ is a resolution of singularities of the whole enhanced nilpotent cone $\ucaln$ (see Remark \ref{rem: simple notations} for the notation).


The following enhanced version of Borho-MacPherson decomposition theorem is partially a counterpart of \cite[Theorem 4.5]{AH} (in a special case) and  of \cite[(11)]{FGT}.
\begin{theorem}\label{thm: 4.1}
 Keep the notations and assumptions as above.  The following statements hold.
\begin{itemize}
\item[(1)] The enhanced Springer resolution $\pies:\scref\rightarrow\ucaln$ is semismall.
\item[(2)] There is a $\uG$-equivariant isomorphism of semisimple perverse sheaves:
        \begin{align}\label{eq: ic decomp}
        \pies_*\overline{\bbq_\ell}[-n^2] \cong \bigoplus_{\lambda\in\scrp_n}V_{(\overline{\co_{\lambda[0]}},
    \overline{\bbq_\ell})}\otimes\text{IC}
    (\overline{\co_{\lambda[0]}},\overline{\bbq_\ell})[-
    (\dim\co_\lambda+n)],
        \end{align}
        where the $V_{(\overline{\co_{\lambda[0]}},
    \overline{\bbq_\ell})}$ are finite-dimensional vector spaces over $\overline{\bbq_\ell}$, the dimension of $V_{(\overline{\co_{\lambda[0]}},
    \overline{\bbq_\ell})}$ is equal to the multiplicity of $\text{IC}
    (\overline{\co_{\lambda[0]}},\overline{\bbq_\ell})[-
    (\dim\co_\lambda+n)]$ as a direct summand in the decomposition
of $\calk$  into a direct sum of simple perverse sheaves, $\calk$ stands for the left hand side in the isomorphism \eqref{eq: ic decomp}.
\item[(3)] Let $\frak{S}_n$ be the Weyl group of $\GL_n$.
 The above decomposition is compatible with $\frak{S}_n$-action. Precisely speaking, one has an action of $\frak{S}_n$ on $\calk$ which induces an algebra isomorphism from $\overline{\bbq_\ell}[\frak{S}_n]$ onto $\End_{\text{Perv}(\ucaln)}(\calk)$, where $\text{Perv}(\ucaln)$ denotes the category of all perverse sheaves on $\ucaln$ which is an abelian category.
\end{itemize}

\end{theorem}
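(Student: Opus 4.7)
The overall plan is to follow the classical Borho-MacPherson-Lusztig strategy for the ordinary Springer resolution, translated to the enhanced setting. The key geometric input is the identification $\cb_{(X,w)}=\cb_X$ from Lemma \ref{thm: 4.3}(1), which reduces many questions about $\pies$ to their classical counterparts for $\pias$.

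For part (1), I will establish semismallness of $\pies$ by a direct dimension count. Properness is supplied by Lemma \ref{lem: flag closure}(1). For $(X,w)\in\co_{\lambda[q]}$ one has $\dim\pies^{-1}(X,w)=\dim\cb_X=\tfrac12(n^2-n-\dim\co_\lambda)$ from Lemma \ref{thm: 4.3}(2), and $\dim\co_{\lambda[q]}=\dim\co_\lambda+(n-q)$ from Proposition \ref{prop: enh orbit dim}. Combining these yields $2\dim\pies^{-1}(X,w)+\dim\co_{\lambda[q]}=n^2-q$, which is bounded by $\dim\scref=n^2$, with equality exactly when $q=0$. This not only gives semismallness but also identifies the \emph{relevant} strata of $\ucaln$ for the BBD decomposition as precisely the orbits $\co_{\lambda[0]}$.

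For part (2), once semismallness is established, I will invoke the Beilinson-Bernstein-Deligne decomposition theorem for the proper map $\pies$ to express $\pies_*\overline{\mathbb{Q}_\ell}[\dim\scref]$ as a direct sum of IC sheaves supported on orbit closures, equivariant under the $\uG$-action. The relevance-of-strata analysis in (1) shows that only closures $\overline{\co_{\lambda[0]}}=\overline{\co_\lambda}\times V$ (by Corollary \ref{CorBasicClosure}(2)) can carry nontrivial IC summands, and the connectedness of Springer fibers (Lemma \ref{thm: 4.3}(1)) forces every local system appearing to be trivial. Matching the shift conventions $[-n^2]$ and $[-(\dim\co_\lambda+n)]$ against $\dim\scref$ and $\dim\overline{\co_{\lambda[0]}}=\dim\co_\lambda+n$ respectively is then a routine bookkeeping step.

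For part (3), I will transfer the $\mathfrak{S}_n$-action from the classical Springer setting. Geometrically, the forgetful projection $\scref\to\scraf$ (a vector-bundle-type projection, as in the discussion following Lemma \ref{lem: parabolic lem}) together with the first-coordinate projection $\ucaln\to\caln$ fits $\pies$ and $\pias$ into a commutative square with smooth horizontal morphisms having connected fibers. Pushforward along $\pies$ is therefore closely related to pushforward along $\pias$ via a pullback that is fully faithful on the relevant categories of equivariant perverse sheaves. The classical Springer $\mathfrak{S}_n$-action on $\pias_*\overline{\mathbb{Q}_\ell}$, for which the algebra isomorphism $\overline{\mathbb{Q}_\ell}[\mathfrak{S}_n]\cong\End\pias_*\overline{\mathbb{Q}_\ell}$ is known (see \cite{Spr}, \cite{Lu}, \cite{jan2}), then lifts to $\calk$ and yields the desired statement.

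The principal obstacle will be making the pullback argument in part (3) rigorous, since the naive comparison ``$\pies_*=\mathrm{pr}_1^*\,\pias_*$'' is not literally correct: $\scref$ is not simply $\scraf\times V$ because $u$ is constrained to lie in $M_{a_1-1}$ rather than all of $V$. I expect to deal with this by either a careful base-change calculation that tracks the constraint, or by bypassing the comparison with a direct stalkwise construction on $H^*(\cb_{(X,w)})=H^*(\cb_X)$ and then promoting to a sheaf-theoretic action using semisimplicity of $\calk$ and the connectedness of the enhanced Springer fibers. A secondary point to verify is $\uG$-equivariance of the resulting action: since the $V$-part of $\uG$ acts nontrivially on $\ucaln$, one must check that the $\mathfrak{S}_n$-action commutes with this unipotent action, which is expected because the $\mathfrak{S}_n$-action originates purely from the $G$-side of the classical construction.
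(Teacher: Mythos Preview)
Your strategy is sound and close to the paper's, but there are two points worth flagging.

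\textbf{Part (2): different routes to the same conclusion.} The paper does not invoke the relevant-strata criterion directly. Instead it writes down the full Borho--MacPherson decomposition indexed by \emph{all} $\lambda[q]\in\scrpe$, and then kills the multiplicity spaces $V_{(\overline{\co_{\lambda[q]}},\overline{\bbq_\ell})}$ for $q>0$ by an explicit stalk computation: taking $(X,w)$ of type $\lambda[q]$ and setting $d=n^2-\dim\co_{\lambda[q]}$, one isolates $H^d(\cb_{(X,w)},\overline{\bbq_\ell})\cong V_{(\overline{\co_{\lambda[q]}},\overline{\bbq_\ell})}$, and since $d=2\dim\cb_X+q$ exceeds $2\dim\cb_X$ when $q>0$, affine paving of $\cb_X$ forces this to vanish. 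Your relevant-strata argument is equivalent and more efficient; it front-loads the same dimension count into the semismallness analysis. One correction: it is not connectedness of the \emph{fibers} $\cb_{(X,w)}$ that forces the local systems to be trivial, but connectedness of the \emph{stabilizers} $\uG_{(X,w)}$ (Lemma~\ref{lem: basic lem}(2)), which trivializes the equivariant component group and hence the monodromy on $H^{\mathrm{top}}$.

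\textbf{Part (3): your obstacle is not there.} You worry that $\scref\neq\scraf\times V$ because of the constraint $u\in M_{a_1-1}$. But for the enhanced Springer resolution itself one has $\lambda=(n)$ and $q=0$, so $\dim M_{a_1-1}=n-q=n$, i.e.\ $M_{a_1-1}=V$ and the constraint is vacuous. Thus $\scref=\scraf\times V$ literally, $\pies=\pias\times\id_V$, and smooth base change along $\mathrm{pr}_1:\ucaln=\caln\times V\to\caln$ gives $\calk\cong\mathrm{pr}_1^*\bigl(\pias_*\overline{\bbq_\ell}[-(n^2-n)]\bigr)[-n]$. The classical isomorphism $\overline{\bbq_\ell}[\mathfrak{S}_n]\cong\End(\pias_*\overline{\bbq_\ell})$ then transports directly, since $\mathrm{pr}_1^*$ is fully faithful on perverse sheaves up to shift. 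This is exactly what the paper means by ``a direct consequence of the second one along with the classical Springer theory''; your commutative-square plan is correct but you have made it harder than necessary.
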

\begin{proof} (1) Keep in mind that $\ucaln=\caln\times V$ and then $\dim\ucaln=\dim\caln+n=n^2$. The first statement is equivalent to claim that for any $i\in \bbn$,
\begin{align}\label{eq: semis}
\dim\{(X,w)\in \ucaln\mid \dim \pies^{-1}(X,w)\geq {i\over 2}\}\leq {n^2}-i.
\end{align}
By definition, it is readily shown that $\pies^{-1}(X,w)$ coincides with $\pias^{-1}(X)$. Therefore,
 $$\{(X,w)\in \ucaln\mid \dim \pies^{-1}(X,w)\geq {i\over 2}\}
 =\{{X\in \caln}\mid \dim \pias^{-1}X\geq {i\over 2}\}\times V.
 $$
Thanks to the classical result, the Springer resolution $\pias$ is semi-small. Hence
$\dim\{X\in \caln\mid \dim \pias^{-1}(X)\geq {i\over 2}\}\leq {n^2-n-i}$. So \eqref{eq: semis} follows.

(2) 
 Recall the centralizer $\uG_{(X,w)}$ of any given $(X,w)\in\ucaln$ in $\uG$ is connected (Lemma \ref{lem: basic lem}). Thus, on the basis of (1) along with Theorem \ref{thm: classi} and \eqref{eq: compatible tcl} we have, as a $\GL_n$-equivariant perverse sheaf, the following Borho-MacPherson decomposition
\begin{align}\label{eq: Lu-Sp dec}
 \pies_*\overline{\bbq_\ell}[-n^2] \cong \bigoplus_{\lambda[q]\in\scrpe}V_{(\overline{\co_{\lambda[q]}},
    \overline{\bbq_\ell})}\otimes\text{IC}
    (\overline{\co_{\lambda[q]}},\overline{\bbq_\ell})[-
    (\dim\co_{\lambda[q]})]
    \end{align}
(see \cite{BMac}). In particular, $\text{IC}
    (\overline{\co_{\lambda[q]}},\overline{\bbq_\ell})[-
    (\dim\co_{\lambda[q]})]$ are the simple perverse sheaves on $\co_{\lambda[q]}$ while  the multiplicity space $V_{(\overline{\co_{\lambda[q]}},
    \overline{\bbq_\ell})}$ could be zero. Furthermore,  this decomposition is $\uG$-equivariant. Actually, $\uG$ is a connected algebraic group, and by construction $\calk$ is a $\uG$-equivariant perverse sheaf on $\ucaln$ which is actually semi-simple perverse because $\ucaln$ has finite $\uG$-orbits. So any subquotient of $\calk$ is $\uG$-equivariant. Hence the decomposition \eqref{eq: Lu-Sp dec} is really $\uG$-equivariant (see \cite[Lemma 12.19]{jan2}).

    We claim that
    \begin{align}\label{eq: claim ic decomp}
    V_{(\overline{\co_{\lambda[q]}},\overline{\bbq_\ell})}=0
          \; \text{ in } \eqref{eq: Lu-Sp dec} \text{ unless }q=0,
          \end{align}
consequently yielding the desired decomposition \eqref{eq: ic decomp}.
     In order to show this, we arbitrarily take $(X,w)\in \ucaln$ and assume it is of type $\lambda[q]$. By definition we have the isomorphism of cohomology $$H^{i}(\cb_{(X,w)},\overline{\bbq_\ell})\cong \calh^{i-n^2}_{(X,w)}(\calk)$$
where $\calh^{j}_{(X,w)}(-)$ is the stalk at $(X,w)$ of the $j$th cohomology of a perverse sheave $(-)$. Thus, from \eqref{eq: Lu-Sp dec} it follows that
$$H^{i}(\cb_{(X,w)},\overline{\bbq_\ell})\cong
\bigoplus_{\mu[l]\in\scrpe}V_{(\overline{\co_{\mu[l]}},
    \overline{\bbq_\ell})}\otimes
    \calh^{i-n^2+\dim\co_{\mu[l]}}_{(X,w)}\text{IC}
    (\overline{\co_{\mu[l]}},\overline{\bbq_\ell}).$$
    Taking $d=n^2-\dim \co_{\lambda[q]}$ and setting  $d_\mu:=\dim\co_{\mu[l]}-\dim\co_{\lambda[q]}$,
     we have
    $$ H^{d}(\cb_{(X,w)},\overline{\bbq_\ell})\cong
\bigoplus_{\mu[l]\in\scrpe}V_{(\overline{\co_{\mu[l]}},
    \overline{\bbq_\ell})}\otimes
    \calh^{d_\mu}_{(X,w)}\text{IC}
    (\overline{\co_{\mu[l]}},\overline{\bbq_\ell}).$$
Suppose $\calh^{d_\mu}_{(X,w)}\text{IC}
(\overline{\co_{\mu[l]}},\overline{\bbq_\ell})\ne 0$. By the definition of intersection complex, $(X,w)$ must lies in $\overline{\co_{\mu[l]}}$.  On the other side, due to the $\uG$-equivariance, we have $\co_{(X,w)}\subset\text{Supp}(\calh^{d_\mu}_{(X,w)}\text{IC}
(\overline{\co_{\mu[l]}},\overline{\bbq_\ell}))$. Hence we have
$$\dim \text{Supp}(\calh^{d_\mu}_{(X,w)}\text{IC}
(\overline{\co_{\mu[l]}},\overline{\bbq_\ell}))\geq \dim\co_{(X,w)}.$$
By the axioms of perverse sheaves (see \cite{BBD}, or \cite[\S12.16]{jan2}), $d_\mu\leq 0$. Combining two sides, we finally have $\co_{\lambda[q]}=\co_{\mu[l]}$. Hence, as the first step we obtain
\begin{align}\label{eq: top ell coh}
H^{d}(\cb_{(X,w)},\overline{\bbq_\ell})\cong
V_{(\overline{\co_{\lambda[q]}},\overline{\bbq_\ell})}\otimes
    \overline{\bbq_\ell}.
    \end{align}
Next, recall that $\cb_{(X,w)}=\cb_X$, and all Springer fibers have affine pavings (see \cite{Spa2} or \cite[11.5]{jan2}). Hence $\dim H^{2i}(\cb_X,\overline{\bbq_\ell})$ is equal to the number of parts in the paving isomorphic to the affine space $\textbf{A}^i$ of dimension $i$, and $H^{2i+1}(\cb_X,\overline{\bbq_\ell})=0$ (see \cite[VI.4.20 and VI.11.1]{Mi} or \cite[\S12]{jan2}). However,  by Lemma \ref{thm: 4.3}(2) $\dim\cb_{(X,w)}={1\over2}(n^2-n-\dim\co_X)$. So $d= 2\dim\cb_{(X,w)}+q > 2\dim\cb_{(X,w)}$ if $q>0$. Hence $H^{d}(\cb_{(X,w)},\overline{\bbq_\ell})=0$ if $q>0$. Correspondingly,
 $V_{(\overline{\co_{\lambda[q]}},\overline{\bbq_\ell})}$ must be zero in \eqref{eq: top ell coh} as long as $q>0$. So the claim \eqref{eq: claim ic decomp} is proved.

Finally,  the $\uG$-equivariance of the decomposition \eqref{eq: ic decomp} has been assured in the arguments for \eqref{eq: Lu-Sp dec}. Summing up, we obtain the desired decomposition  \eqref{eq: ic decomp}.

(3) The last part is a direct consequence of the second one along with the classical springer theory (see \cite{Sho} or \cite{jan2}). As to the statement concerning perverse sheaves, one can be referred to \cite[1.3.6]{BBD} or \cite[\S12]{jan2}, \cite{Sho}.
\end{proof}

\begin{remark}
(1) As to the multiplicity space $V_{(\overline{\co_{\lambda[0]}},
    \overline{\bbq_\ell})}$, its dimension is equal to the one of $V_{\overline{\co_\lambda}}$ in the classical Borho-MacPherson decomposition of the perverse sheaf on $\caln$ for $\gl_n$. This is because from the top etale cohomology computation \eqref{eq: top ell coh} and from the observation $\cb_{(X,w)}=\cb_X$, one has $V_{(\overline{\co_{\lambda[q]}},\overline{\bbq_\ell})}\otimes
    \overline{\bbq_\ell} \cong V_{(\overline{\co_{\lambda}},\overline{\bbq_\ell})}\otimes
    \overline{\bbq_\ell}$.

(2)  Thanks to  \cite[Theorem 1.1]{Ma} and its origination \cite[Theorem 4.7]{AH} and  \cite{AH2}, there is a general result on the existence of affine paving for all fibers of $\pie$. This implies that for any $(Y,u)\in \overline{\co_{\lambda[q]}}$, the fiber $\pie^{-1}(Y,u)$ has an affine paving.

\end{remark}

\subsection{Enhanced version of De Concini-Procesi's theorem}  De Concini-Procesi described  the cohomology ring structure of Springer fibers   in \cite{dCP}. We present their result in the enhanced version.

Set $H^i(\cb_{(X,w)}):=H^i(\cb_{(X,w)},\bbz)$, the ordinary cohomology.  Let $H^\bullet(\cb_{(X,w)})$ denote the cohomology ring.

\begin{theorem}\label{prop: 4.6} Assume $(X,w)\in\ucaln$ is of type $\lambda[q]$. The following statements hold.
\begin{itemize}
\item[(1))] There is an isomorphism as algebras and $\frak{S}_n$-modules:
$$H^{\bullet}(\cb_{(X,w)})\cong H^{\bullet}(\cb_{X})\cong R_{\lambda}$$
where $R_\lambda$ is the coordinate ring of $\overline\co_{\lambda^{\textsf{t}}}\sqcap \hhh$,  the scheme-theoretic scheme intersection between $\overline{\co_{\lambda^{\textsf{t}}}}$ and the canonical torus $\hhh$ consisting of diagonal matrices.

\item[(2)] $\dim H^{\bullet}(\cb_{(X,w)})={n!\over a_1!\cdots a_t!}$ where $\lambda=(a_1\geq a_2\geq\cdots a_t> 0)$.
\end{itemize}
\end{theorem}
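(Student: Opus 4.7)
The plan is to reduce both parts to the classical De Concini--Procesi theorem by exploiting Lemma \ref{thm: 4.3}(1), which furnishes the key equality of varieties $\cb_{(X,w)} = \pies^{-1}(X,w) = \pias^{-1}(X) = \cb_X$. From this equality, the identity $H^{\bullet}(\cb_{(X,w)}) = H^{\bullet}(\cb_X)$ as graded rings is tautological, since the two cohomologies are taken of literally the same topological space. The substantive point is the $\frak{S}_n$-equivariance: one must confirm that the action on $H^{\bullet}(\cb_{(X,w)})$ induced by Theorem \ref{thm: 4.1}(3) agrees with the classical Springer $\frak{S}_n$-action on $H^{\bullet}(\cb_X)$.

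To establish this compatibility, I would unwind the definition of $\scref$ in the full-flag case ($\lambda = (n)$, $q=0$) relevant to the Springer resolution $\pies$. Using the formulas for $m_i = \dim F^0_i$ preceding Lemma \ref{lem: trans of parts}, one checks that $m_{a_1-1} = n - q = n$, so $M_{a_1-1} = V$ in the defining condition for $\scref$; hence $\scref \cong \scraf \times V$ and $\pies = \pias \times \id_V$. Consequently $\calk = \pies_*\overline{\bbq_\ell} \cong (\pias_*\overline{\bbq_\ell}) \boxtimes \overline{\bbq_\ell}_V$ up to the cohomological shift appearing in Theorem \ref{thm: 4.1}(2), and the $\frak{S}_n$-action on $\calk$ furnished by Theorem \ref{thm: 4.1}(3) is simply the classical Springer action tensored with the identity on the $V$-factor. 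Taking the stalk at $(X,w)$ then identifies the two $\frak{S}_n$-module structures on $H^\bullet(\cb_X) = H^\bullet(\cb_{(X,w)})$.

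With (1) reduced to classical Springer theory, I would invoke De Concini--Procesi \cite{dCP} for the isomorphism $H^\bullet(\cb_X) \cong R_\lambda$ as graded algebras and $\frak{S}_n$-modules, completing (1). For (2), I would appeal to the standard description (due to Springer, and Hotta--Springer) of the underlying ungraded $\frak{S}_n$-module: $H^\bullet(\cb_X)$ is isomorphic to the Young permutation module $\Ind_{\frak{S}_\lambda}^{\frak{S}_n}(\mathbf{1})$ with $\frak{S}_\lambda = \frak{S}_{a_1} \times \cdots \times \frak{S}_{a_t}$, whose dimension is $[\frak{S}_n : \frak{S}_\lambda] = n!/(a_1!\cdots a_t!)$. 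Equivalently, this follows from the Spaltenstein affine paving of $\cb_X$, whose cell count equals the same multinomial coefficient.

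The main obstacle is precisely the $\frak{S}_n$-equivariance compatibility in the first two paragraphs: one must confirm that the Weyl group action furnished by Theorem \ref{thm: 4.1}(3) on $\calk$ really restricts to (equivalently, is induced from) the classical Springer action on $\pias_*\overline{\bbq_\ell}$. Once this is settled via the product structure $\pies = \pias \times \id_V$ specific to the full-flag case, the rest is a direct application of classical Springer theory and De Concini--Procesi.
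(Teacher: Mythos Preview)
Your proposal is correct and follows essentially the same route as the paper: reduce to the classical De Concini--Procesi theorem via the identification $\cb_{(X,w)}=\cb_X$ from Lemma~\ref{thm: 4.3}(1), then cite \cite{dCP}, \cite[\S11.5]{jan2}, \cite{Ta}. The paper's proof is two sentences and does not spell out the $\frak{S}_n$-equivariance compatibility you work through via $\scref\cong\scraf\times V$ and $\pies=\pias\times\id_V$; your extra care there is justified but not something the paper treats as needing argument.
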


\begin{proof} By Lemma \ref{thm: 4.3}(1), $\cb_{(X,w)}=\cb_X$. The statements directly follow from  the classical results on Springer fibers (see \cite{dCP}, \cite[\S11.5]{jan2}, or \cite{Ta}).
\end{proof}



\end{document}